\documentclass[12pt]{amsart}
\usepackage{amsmath}
\usepackage{libertine}
\usepackage[libertine]{newtxmath}
\usepackage{mathrsfs} 
\usepackage[T1]{fontenc} 
\usepackage{hyperref}
\usepackage{url}

\usepackage{lipsum}
\usepackage{latexsym}
\usepackage{cite,amsmath,amstext,amsbsy,bm} 
\usepackage{paralist}
\usepackage{datetime} 
\usepackage{colortbl}
\usepackage[dvipsnames]{xcolor}
\usepackage{mathtools}
\usepackage{euscript}
\usepackage[all]{xy}

\usepackage{graphicx} 
\usepackage[a4paper,text={150mm,250mm},centering]{geometry} 
\usepackage{comment}  
\usepackage{cite}  
\usepackage{thmtools}
\usepackage{enumitem}
\usepackage{letltxmacro} 
\usepackage{nameref}
\usepackage{cleveref}
\usepackage{calc}
\usepackage{interval}
\usepackage{manfnt}
\usepackage{tikz-cd}  
\usepackage[utf8]{inputenc}
\usepackage{newunicodechar} 
\usepackage[hyperpageref]{backref}

\usepackage{csquotes}

\usepackage{faktor} 



\theoremstyle{plain}
\newtheorem{thmx}{Theorem}
\renewcommand{\thethmx}{\Alph{thmx}}

\newtheorem{thm}{Theorem}[section]  
\newtheorem{lem}[thm]{Lemma}

\newtheorem{proposition}[thm]{Proposition}
\newtheorem{cor}[thm]{Corollary}

\newtheorem{corx}[thmx]{Corollary}

\theoremstyle{definition}
\newtheorem{dfn}[thm]{Definition}
\newtheorem{example}[thm]{Example}

\theoremstyle{remark}
\newtheorem{rem}[thm]{Remark}


\setcounter{tocdepth}{1}

\numberwithin{equation}{subsection}  

\theoremstyle{plain}
\newlist{thmlist}{enumerate}{1}
\setlist[thmlist]{wide = 0pt, labelwidth = 2em, labelsep*=0em, itemindent = 0pt, leftmargin = \dimexpr\labelwidth + \labelsep\relax, noitemsep,topsep = 1ex, font=\normalfont, label=(\roman*), ref=\thethm.(\roman{thmlisti})}

\addtotheorempostheadhook[thm]{\crefalias{thmlisti}{thm}}

\addtotheorempostheadhook[assumpsion]{\crefalias{thmlisti}{assumption}}

\addtotheorempostheadhook[prop]{\crefalias{thmlisti}{prop}}

\addtotheorempostheadhook[dfn]{\crefalias{thmlisti}{dfn}}

\addtotheorempostheadhook[lem]{\crefalias{thmlisti}{lem}}
\addtotheorempostheadhook[main]{\crefalias{thmlisti}{main}}

\addtotheorempostheadhook[rem]{\crefalias{thmlisti}{rem}}

\newlist{thmenum}{enumerate}{1} 
\setlist[thmenum]{wide = 0pt, labelwidth = 2em, labelsep*=0em, itemindent = 0pt, leftmargin = \dimexpr\labelwidth + \labelsep\relax, noitemsep,topsep = 1ex, font=\normalfont, label=(\roman*), ref=\thethmx.(\roman{thmenumi})}
\crefalias{thmenumi}{thmx}




\makeatletter
\newsavebox{\@brx}
\newcommand{\llangle}[1][]{\savebox{\@brx}{\(\m@th{#1\langle}\)}%
	\mathopen{\copy\@brx\kern-0.5\wd\@brx\usebox{\@brx}}}
\newcommand{\rrangle}[1][]{\savebox{\@brx}{\(\m@th{#1\rangle}\)}%
	\mathclose{\copy\@brx\kern-0.5\wd\@brx\usebox{\@brx}}}
\makeatother

\crefname{lem}{Lemma}{Lemmas}
\crefname{thm}{Theorem}{Theorems}
\crefname{proposition}{Proposition}{Propositions}
\crefname{dfn}{Definition}{Definitions}
\crefname{rem}{Remark}{Remarks}
\crefname{cor}{Corollary}{Corollaries}
\crefname{corx}{Corollary}{Corollaries}
\crefname{problem}{Problem}{Problems}
\crefname{thmx}{Theorem}{Theorems}
\crefname{claim}{Claim}{Claims}
\crefname{assumption}{Assumption}{Assumptions}

\def\ep{\varepsilon}

\def\rank{{\rm rank}}

\newcommand{\diae}{{}^\diamond\! E}

\makeatletter
\newcommand*{\rom}[1]{\expandafter\@slowromancap\romannumeral #1@}
\makeatother

\makeatletter     
\newcommand{\crefnames}[3]{%
	\@for\next:=#1\do{%
		\expandafter\crefname\expandafter{\next}{#2}{#3}%
	}%
}
\makeatother

\crefnames{part,chapter,section}{\S}{\S\S}

\setcounter{section}{-1}

\newcommand{\sA}{\mathscr{A}}

\newcommand{\sC}{\mathscr{C}}
\newcommand{\sD}{\mathscr{D}}

\newcommand{\sI}{\mathscr{I}}


\newcommand{\cE}{\mathcal E}

\newcommand{\cF}{\mathcal F}

\newcommand{\cO}{\mathcal O}



\newcommand{\bB}{\mathbb{B}}
\newcommand{\bC}{\mathbb{C}}

\newcommand{\bP}{\mathbb{P}}

\newcommand{\bR}{\mathbb{R}}

\newcommand{\bZ}{\mathbb{Z}}

 \newcommand{\kg}{\mathfrak{g}} 
 
 \newcommand{\kk}{\mathfrak{k}} 
 \newcommand{\kl}{\mathfrak{l}}

 \newcommand{\ks}{\mathfrak{s}} 
 
 \newcommand{\ku}{\mathfrak{u}}

\def\db{\bar{\partial}}

 \def\d{\partial} 
\def\hess{\sqrt{-1}\partial\overline{\partial}}

\def\sn{\sqrt{-1}}

\def\nak{\text{\tiny  Nak}}

\def\End{\text{\small  End}}

\def\hom{\text{\small  Hom}}

\def\vol{\text{\small  Vol}}


\makeatletter 
\let\@wraptoccontribs\wraptoccontribs
\makeatother

\def\lmd{\lambda}

\makeatletter
\hypersetup{
	pdfauthor={\authors},
	pdftitle={\@title},
	pdfsubject={\@subjclass},
	pdfkeywords={\@keywords},
	pdfstartview={Fit},
	pdfpagelayout={TwoColumnRight},
	pdfpagemode={UseOutlines},
	bookmarks,
	colorlinks,
	linkcolor=Fuchsia,
	citecolor=NavyBlue,
	urlcolor=CarnationPink}
\makeatother

\begin{document} 
 	\title[characterization of  non-compact ball quotients]{A characterization of  complex quasi-projective \\manifolds uniformized by unit balls}

	\author{Ya Deng}  
\address{CNRS, Institut \'Elie Cartan de Lorraine, Universit\'e de Lorraine, F-54000 Nancy,
	France.}

\email{ya.deng@univ-lorraine.fr \quad ya.deng@math.cnrs.fr}

\urladdr{https://ydeng.perso.math.cnrs.fr} 
	
\dedicatory{With an appendix written jointly with   {\rm \textsc{Beno\^it Cadorel}}} 
\address[Beno\^it Cadorel]{Institut \'{E}lie Cartan de Lorraine, UMR 7502, Universit\'{e} de Lorraine, Site de Nancy, B.P. 70239, F-54506 Vandoeuvre-l\`{e}s-Nancy Cedex}
\email{benoit.cadorel@univ-lorraine.fr}
\thanks{This work is supported by \enquote{le fond  Chern} \`a l'IHES.}
\urladdr{http://www.normalesup.org/~bcadorel/} 

	\date{\today} 
	\begin{abstract}  
	In 1988 Simpson extended the Donaldson-Uhlenbeck-Yau theorem to the context of Higgs bundles, and as an application he proved a uniformization theorem which characterizes complex projective manifolds and quasi-projective curves whose universal coverings are complex unit balls.	In this paper we give a necessary and sufficient condition for quasi-projective manifolds to be uniformized by complex unit balls. This generalizes the uniformization theorem by Simpson.  
Several byproducts are also obtained in this paper.
\end{abstract}  
\subjclass[2010]{14D07, 14C30, 32M15}
\keywords{Uniformization, non-compact ball quotient, Simpson-Mochizuki correspondence,  log Higgs bundles, principal variation of Hodge structures, toroidal compactification}
 
	\maketitle
	

\section{Introduction} 
\subsection{Main result}
The  main goal of this paper is to characterize complex  quasi-projective manifolds whose universal coverings are complex unit balls.
\begin{thmx}[=\cref{item1}]\label{main}
	Let $X$ be an $n$-dimensional complex projective manifold and let $D$ be a \emph{smooth} divisor on $X$ (which might contain several disjoint components). Let $L$ be an ample polarization on $X$. For the log Higgs bundle
	$(\Omega_X^1(\log D)\oplus \cO_X,\theta)$ on $(X,D)$ with the Higgs field $\theta$ defined by
	\begin{align}\label{eq:Higgs}
	\theta:\Omega_X^1(\log D)\oplus \cO_X&\to (\Omega_X^1(\log D)\oplus \cO_X)\otimes \Omega^1_X(\log D)\\\nonumber
	(a,b)&\mapsto (0,a),
	\end{align}
	 if it is $\mu_L$-polystable (see \cref{sec:stability} for the definition),  then one has the following inequality
\begin{align}\label{eq:BM}
	\big(2c_2(\Omega_X^1(\log D))-\frac{n}{n+1}c_1(\Omega_X^1(\log D))^2\big)\cdot c_1(L)^{n-2}\geq 0. 
\end{align}
When the   equality holds,     then  ${X-D}\simeq \faktor{\bB^n}{\Gamma}$ for some  torsion  free lattice $\Gamma\subset PU(n,1)$ acting on $\bB^n$. Moreover, $X$ is the (unique)   toroidal compactification of $\faktor{\bB^n}{\Gamma}$, and each connected component of $D$ is the \emph{smooth}  quotient of an Abelian variety $A$  by a finite group acting  freely on $A$. 
\end{thmx}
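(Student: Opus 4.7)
The plan is to split the argument into the inequality and its equality case, with the Simpson-Mochizuki correspondence for tame harmonic bundles on $(X,D)$ serving as the central technical input throughout.

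\textbf{The inequality \eqref{eq:BM}.} The log Higgs bundle $(E,\theta) = (\Omega_X^1(\log D) \oplus \mathcal{O}_X, \theta)$ has rank $n+1$, and since the $\mathcal{O}_X$ summand is trivial, $c_i(E) = c_i(\Omega_X^1(\log D))$ for $i = 1,2$. The logarithmic Bogomolov-Gieseker inequality for $\mu_L$-polystable log Higgs bundles with trivial parabolic weights, available via Mochizuki's extension of the Corlette-Simpson correspondence to the quasi-projective setting, gives
\[
\bigl(2(n+1)\, c_2(E) - n\, c_1(E)^2\bigr) \cdot c_1(L)^{n-2} \geq 0.
\]
Dividing by $n+1$ and substituting the identification of Chern classes above yields \eqref{eq:BM}.

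\textbf{Equality case: producing a variation of Hodge structure.} When equality holds, the tame pluriharmonic metric on $(E,\theta)$ produced by the correspondence must be projectively flat away from $D$. The splitting $E = \Omega_X^1(\log D) \oplus \mathcal{O}_X$ is compatible with $\theta$ in the graded sense (with $\theta$ sending $\mathcal{O}_X$ into $\Omega_X^1(\log D) \otimes \Omega_X^1(\log D)$ and annihilating $\Omega_X^1(\log D)$), so it provides a Hodge filtration with $E^{1,0} = \Omega_X^1(\log D)$ and $E^{0,1} = \mathcal{O}_X$. The projective flatness of the associated flat connection, together with this grading and the polystability, upgrades $(E, \theta)|_{X \setminus D}$ to a polarized $\mathbb{C}$-VHS of weight one with Hodge signature $(n,1)$, whose period domain is exactly $\mathbb{B}^n$.

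\textbf{Uniformization and toroidal structure.} The explicit formula $\theta(a,b) = (0,a)$ makes the Higgs field a fibrewise isomorphism between the Hodge summands, so the differential of the associated period map $\widetilde{X \setminus D} \to \mathbb{B}^n$ is everywhere an isomorphism. By $\mathrm{PU}(n,1)$-equivariance of the period map, Kobayashi hyperbolicity of $\mathbb{B}^n$, and a covering-space argument, this period map is a biholomorphism, giving $X \setminus D \simeq \mathbb{B}^n/\Gamma$ for a torsion-free lattice $\Gamma \subset \mathrm{PU}(n,1)$. Tameness of the harmonic metric and the precise form of $(E,\theta)$ force the local monodromy around each component of $D$ to be unipotent and parabolic in $\mathrm{PU}(n,1)$; by the Ash-Mumford-Rapoport theory of toroidal compactifications of ball quotients, $X$ is then the (unique) toroidal compactification of $\mathbb{B}^n/\Gamma$, and each component of $D$ is a quotient of an abelian variety by a finite group, the smoothness of $D$ being exactly what ensures freeness of this action. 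The hardest step is the transition from equality in the Bogomolov-Gieseker inequality to a genuine VHS structure (rather than merely a projectively flat pluriharmonic bundle): this requires a delicate asymptotic analysis of Mochizuki's tame harmonic metric near $D$ to rule out nilpotent corrections to the naive Hodge filtration and to verify that the $\mathrm{PU}(n,1)$-monodromy structure descends globally.
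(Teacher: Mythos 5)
Your outline diverges from what can actually be proved at two critical points. First, you cannot apply the Simpson--Mochizuki correspondence directly to $(E,\theta)=(\Omega_X^1(\log D)\oplus\cO_X,\theta)$ to get a tame pluriharmonic (or projectively flat) metric on $E$ itself: Mochizuki's theorem requires the vanishing of $c_1(E)\cdot c_1(L)^{n-1}$ and $ch_2(E)\cdot c_1(L)^{n-2}$, whereas here $c_1(E)=c_1(K_X+D)$ is far from zero (it is in fact big in the equality case), and the Kobayashi--Hitchin correspondence for general slope-polystable parabolic Higgs bundles with nonvanishing Chern classes is not available in the quasi-projective setting. The paper circumvents exactly this obstruction by passing to the adjoint object: one checks that equality in \eqref{eq:BM} gives $c_1=0$ and $ch_2\cdot c_1(L)^{n-2}=0$ for $\End(E)$, that $(\End(E),\theta_{\End(E)})$ is still $\mu_L$-polystable (\cref{prop:stability}), and then runs the principal-bundle/Tannakian machinery (\cref{prop:converse}, \cref{Tannakian}) with the faithful adjoint Hodge representation of $G=PGL(n+1,\bC)$ to produce a principal variation of Hodge structures for $G_0=PU(n,1)$ whose associated period map $\widetilde{X-D}\to\bB^n$ is a local biholomorphism. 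Your ``projective flatness plus grading upgrades to a weight-one VHS'' step has no existence theorem behind it.

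Second, your passage from local biholomorphism to biholomorphism (``$PU(n,1)$-equivariance, Kobayashi hyperbolicity of $\bB^n$, and a covering-space argument'') cannot work as stated: a locally biholomorphic equivariant map to a hyperbolic target need not be a covering map, and indeed the paper's \cref{item2} shows that with $D$ simple normal crossing but not smooth the very same period map is locally biholomorphic yet \emph{not} an isomorphism, so any argument that never uses smoothness of $D$ at this stage proves too much. The actual content of the equality case is the \emph{completeness} of the induced metric $\tau^*h_H$ on $X-D$: the paper builds an explicit model metric $\tilde h$ near the smooth divisor, bounds its curvature, and uses the identity section of $\End(\End(E))$ together with a Dirichlet-problem/maximum-principle argument and the psh extension lemma (\cref{lem:extension2}) to show $h$ and $\tilde h$ are mutually bounded, whence the norm estimate \eqref{eq:toroidal} and completeness; only then does the covering argument of \cref{rem:complete} apply. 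This is also where smoothness of $D$ enters — not, as you suggest, merely in ensuring freeness of the finite group action on the boundary abelian varieties. Finally, identifying $X$ with the toroidal compactification is not automatic from unipotent monodromy and Ash--Mumford--Rapoport: the paper needs the metric rigidity theorem of the appendix (\cref{thm:rigidity}), which uses that the K\"ahler--Einstein metric, viewed in $T_X(-\log D)$, is adapted to log order. As written, your proposal is missing the two main technical pillars of the proof.
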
 
Let us stress here that the \emph{smoothness} of $D$ in \cref{main} is indeed necessary  if one would like to characterize non-compact ball quotients: in \cref{item2} we prove that the universal cover of $X-D$ is not the complex unit ball $\bB^n$ if $D$ is assumed to be simple normal crossing but not smooth, leaving other conditions in \cref{main} unchanged.  Thus, it might be more appropriate to say that in this paper we give a  characterization of  \emph{smooth toroidal compactification} of non-compact ball quotients.


Note that when $D$ is empty or  when $\dim\, X=1$, \cref{main} has already been proved by Simpson   \cite[Proposition 9.8]{Sim88}.  As we will see later, we follow his strategy closely to prove the above theorem.  Let us also mention that the inequality \eqref{eq:BM} is a direct consequence  of Mochizuki's deep work on the Bogomolov-Gieseker inequality for parabolic Higgs bundles \cite[Theorem 6.5]{Moc06}. Our main contribution   is  the uniformization result when the equality in \eqref{eq:BM} is achieved. The proof builds on Simpson's ingenious ideas \cite{Sim88} on characterizations of complete varieties uniformized by Hermitian symmetric spaces, as well as  Mochizuki's celebrated work on Simpson correspondence for tame harmonic bundles \cite{Moc06}. Since the  Kobayashi-Hitchin correspondence for general slope polystable parabolic Higgs bundles is still unproven, we need some additional methods to prove the above uniformization result (see \cref{sec:difficulty} for  rough ideas).

We will show that the conditions in \cref{main} is indeed necessary, by proving the following slope stability (with respect to a more general polarization) result  for the natural log Higgs bundles associated to toroidal compactification of non-compact ball quotient by torsion free lattice.
\begin{thmx}[=\cref{sec:toroidal}]\label{main2}
  Let $\Gamma\subset PU(n,1)$ be a torsion free lattice  with only unipotent parabolic elements.  Let $X$ be  the (smooth) toroidal compactification of the ball quotient  $\faktor{\bB^n}{\Gamma}$. Write  $D:=X-\faktor{\bB^n}{\Gamma}$ for the boundary divisor, which is a disjoint union of Abelian varieties.   Let $\alpha\in H^{1,1}(X,\bR)$ be a big and nef cohomology $(1,1)$-class on $X$ containing a positive closed $(1,1)$-current $T\in \alpha$ so that $T|_{X-D}$ is a smooth K\"ahler form  and has at most \emph{Poincar\'e   growth} near $D$ (for example, $\alpha=c_1(K_X+D)$ or $\alpha$ contains a K\"ahler form $\omega$).   Then  one has the following equality for Chern classes
	\begin{align}\label{eq:equality}
	2c_2(\Omega_X^1(\log D))-\frac{n}{n+1}c_1(\Omega_X^1(\log D))^2=0. 
	\end{align} 
The log Higgs bundle
	$(\Omega_X^1(\log D)\oplus \cO_X,\theta)$ defined in \eqref{eq:Higgs} is $\mu_\alpha$-polystable for the above big and nef polarization $\alpha$. In particular, it is slope polystable with respect to any K\"ahler polarization and the polarization by the big and nef class $c_1(K_{X}+D)$. 
\end{thmx}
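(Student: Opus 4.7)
The plan is to realize $(\Omega_X^1(\log D) \oplus \cO_X, \theta)$ as the canonical logarithmic extension of the tautological Higgs bundle underlying the uniformizing variation of Hodge structure on $Y := \faktor{\bB^n}{\Gamma}$, equipped with the Bergman metric as a tame pluri-harmonic metric, and then to deduce both conclusions from Mochizuki's theory \cite{Moc06}. Viewing $\bB^n$ as the set of negative lines in a signature-$(n,1)$ Hermitian space, one gets a weight-one $\bC$-VHS on $Y$; after normalizing (twisting by the dual of the $(0,1)$ Hodge line), its graded Higgs bundle is canonically $(\Omega_Y^1 \oplus \cO_Y, \theta|_Y)$, with pluri-harmonic metric induced by the Bergman metric $h_{\rm Berg}$ on $T_Y$ together with the trivial metric on $\cO_Y$. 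Because $\Gamma$ has only unipotent parabolic elements, the monodromy of the VHS around each component of $D$ is unipotent, so the Deligne canonical extension is precisely $(\Omega_X^1(\log D) \oplus \cO_X, \theta)$; the asymptotic analysis of $h_{\rm Berg}$ on the standard local toroidal models shows it has Poincar\'e growth near $D$, hence defines a \emph{tame} pluri-harmonic metric with trivial parabolic weights.

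For the Chern class equality, the key input is that the Bergman metric has constant holomorphic sectional curvature, so its Chern--Weil forms satisfy pointwise on $Y$ the identity of smooth $(2,2)$-forms
\[ 2\, c_2(\Omega_Y^1, h_{\rm Berg}) - \frac{n}{n+1}\, c_1(\Omega_Y^1, h_{\rm Berg})^2 = 0. \]
By Mumford's theorem on good Hermitian metrics, in the form extended to log Higgs bundles in \cite{Moc06}, the Poincar\'e growth of $h_{\rm Berg}$ makes it good on $\Omega_X^1(\log D)$, so the forms above represent the true Chern classes of $\Omega_X^1(\log D)$ in $H^*(X, \bR)$. Integrating the pointwise identity against $\alpha^{n-2}$, with convergence ensured by the Poincar\'e growth of $T|_Y$, yields the equality \eqref{eq:equality}.

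For polystability, the existence of the tame pluri-harmonic metric $h_{\rm Berg}$ on our parabolic Higgs bundle yields, by Mochizuki's Kobayashi--Hitchin correspondence \cite{Moc06}, $\mu_\omega$-polystability for any K\"ahler polarization $\omega$. The generalization to $\alpha$ proceeds via a direct classification of sub-Higgs-sheaves: every saturated $\theta$-invariant subsheaf $\cF \subset E := \Omega_X^1(\log D) \oplus \cO_X$ either lies in the summand $\cO_X$ (and has non-positive degree) or contains $\cO_X$ and has the form $\cO_X \oplus \cF'$ with $\cF' \subset \Omega_X^1(\log D)$; in each case the slope of $\cF$ with respect to $\alpha$ is computed analytically by pairing the Chern forms of $h_{\rm Berg}|_{\cF}$ against $T^{n-1}$, with convergence following from tameness and Poincar\'e growth, and a Simpson-type integration-by-parts combined with the Hermitian--Einstein identity for $h_{\rm Berg}$ gives $\mu_\alpha(\cF) \leq \mu_\alpha(E)$, with equality forcing an $h_{\rm Berg}$-orthogonal Higgs splitting. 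The case $\alpha = c_1(K_X + D)$ fits this framework because the Bergman Ricci form is a positive current in $c_1(K_X + D)$ with Poincar\'e growth. The main technical obstacle is precisely in this last step: extending polystability from K\"ahler polarizations to the non-K\"ahler class $c_1(K_X + D)$, whose natural representative degenerates along $D$, requires delicate control of convergence of the slope integrals against singular currents, of the boundary terms arising in the integration-by-parts, and of the algebraic consequences of the vanishing of the $L^2$ second fundamental form.
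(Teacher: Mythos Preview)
Your approach to the Chern class equality is the same as the paper's: the Bergman metric is \emph{good} in Mumford's sense on $\Omega_X^1(\log D)$ (this is \cite[Proposition 2.1]{To93}, not really \cite{Moc06}), so its Chern--Weil forms represent the honest Chern classes, and the pointwise identity coming from constant holomorphic sectional curvature gives \eqref{eq:equality} directly as an equality in $H^4(X,\bR)$. Your final step of ``integrating against $\alpha^{n-2}$'' is superfluous and slightly weakens the conclusion; once the Chern form vanishes identically, the cohomology class vanishes.

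For polystability the two routes genuinely diverge. The paper does \emph{not} classify Higgs subsheaves. Instead it proves a general criterion (\cref{thm:criteria}): if a log Higgs bundle carries a metric that is Hermitian--Yang--Mills, adapted to log order, and acceptable, then it is $\mu_\alpha$-polystable for any big and nef $\alpha$ admitting a Poincar\'e-growth representative. One then simply checks these three hypotheses for the Bergman metric $h=\omega^{-1}\oplus h_c$. The key point you do not isolate is that here $\sqrt{-1}F_h(E)=\omega\otimes\vvmathbb{1}_E$, so the trace-free curvature vanishes \emph{identically}; the HYM condition $\Lambda_\omega F_h(E)^\perp=0$ therefore holds for \emph{every} background form, which is what makes the argument uniform in $\alpha$. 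Your classification of saturated Higgs subsheaves (either $\cO_X$, or $\cO_X\oplus\cF'$ with $\cF'\subset\Omega_X^1(\log D)$) is correct and mildly interesting, but buys nothing here: the slope inequality for $\cF'\oplus\cO_X$ still reduces to exactly the Chern--Weil/second-fundamental-form computation you sketch, which is precisely the content of \cref{prop:analysis} and \cref{thm:criteria}. Also, your appeal to Mochizuki's correspondence for \emph{K\"ahler} polarizations is imprecise: \cite{Moc06} treats ample line bundles on projective varieties, and extending the ``harmonic $\Rightarrow$ polystable'' direction to big and nef (or even K\"ahler) classes is exactly what \cref{thm:criteria} supplies, via the pluripotential-theoretic machinery of \cite{BEGZ,Gue14}.

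In short, your outline is viable but more circuitous: the classification step is unnecessary, the hard analytic step you flag as the ``main technical obstacle'' is the same one the paper must handle, and the paper packages it once and for all as a clean abstract criterion rather than arguing case by case.
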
  
Since both stability of log Higgs bundles and Chern  equality \eqref{eq:equality} are invariant under taking conjugates with respect to the Galois action,  a direct consequence of \cref{main,main2} is  the following rigidity result of   ball quotient under  the automorphism of  complex number field $\bC$   to its coefficients of defining equations.
\begin{corx}\label{corx}
	Let $\Gamma\subset PU(n,1)$ be a torsion free lattice, and let $X:=\faktor{\bB^n}{\Gamma}$ be the ball quotient, which carries a unique algebraic structure, denoted by $X_{\rm alg}$. For any automorphism $\sigma\in {\rm Aut}(\bC)$, let $X^{\sigma}_{{\rm alg}}:=X_{\rm alg}\times_{\sigma}{\rm Spec}(\bC)$ be the conjugate  variety of $X_{\rm alg}$   under  the automorphism $\sigma$, and denote by $X^{\sigma}$ the analytification of $X^{\sigma}_{{\rm alg}}$. Then $X^\sigma$ is also a ball quotient, namely there is  another torsion free lattice $\Gamma^\sigma\subset PU(n,1)$ so that $X^\sigma=\faktor{\bB^n}{\Gamma^\sigma}$.
\end{corx}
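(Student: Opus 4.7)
The plan is to lift the problem to a finite \'etale cover with a smooth toroidal compactification, apply \cref{main,main2}, and then transport all structures through $\sigma$. First, by Selberg's lemma combined with a standard argument killing torsion in the parabolic isotropy subgroups, I can choose a normal finite-index subgroup $\Gamma_0 \triangleleft \Gamma$ that is torsion-free and whose parabolic elements are all unipotent. Setting $Y := \faktor{\bB^n}{\Gamma_0}$ and $G := \Gamma/\Gamma_0$, the induced map $Y \to X$ is a finite \'etale Galois cover with group $G$, and by Ash--Mumford--Rapoport--Tai the quasi-projective manifold $Y$ admits a unique smooth toroidal compactification $\bar{Y}$ with smooth boundary divisor $D$ (a disjoint union of abelian varieties).

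Second, \cref{main2} applied to $(\bar{Y},D)$ provides the Chern equality $2c_2(\Omega^1_{\bar{Y}}(\log D)) - \frac{n}{n+1}c_1(\Omega^1_{\bar{Y}}(\log D))^2 = 0$ together with the $\mu_L$-polystability of the canonical log Higgs bundle $(\Omega^1_{\bar{Y}}(\log D)\oplus \cO_{\bar{Y}}, \theta)$ for any ample polarization $L$ on $\bar{Y}$. Both the Chern numbers in this equality and $\mu_L$-polystability are algebraic conditions (the latter formulated via ranks and intersection numbers of $\theta$-invariant coherent subsheaves), and the Higgs field $\theta$ of \eqref{eq:Higgs} is constructed intrinsically from the pair $(\bar{Y}, D)$. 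Consequently, after applying $\sigma$, the pair $(\bar{Y}^\sigma, D^\sigma)$ is smooth projective with smooth boundary divisor, $L^\sigma$ is ample, and $(\Omega^1_{\bar{Y}^\sigma}(\log D^\sigma) \oplus \cO_{\bar{Y}^\sigma}, \theta^\sigma)$ is $\mu_{L^\sigma}$-polystable with the Chern equality preserved. \cref{main} then yields that $Y^\sigma \simeq \faktor{\bB^n}{\Gamma_0^\sigma}$ for some torsion-free lattice $\Gamma_0^\sigma \subset PU(n,1)$.

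Third, to descend this structure to $X^\sigma$: the \'etale Galois cover $Y_{\mathrm{alg}} \to X_{\mathrm{alg}}$ with group $G$ transports through $\sigma$, so $G$ acts freely, holomorphically, and properly discontinuously on $Y^\sigma = \faktor{\bB^n}{\Gamma_0^\sigma}$ with quotient $X^\sigma$. Since $\bB^n$ is the universal cover of $Y^\sigma$, this action of $G$ lifts to biholomorphisms of $\bB^n$, which by Cartan's theorem all lie in $PU(n,1)$. The group of all such lifts is a subgroup $\Gamma^\sigma \subset PU(n,1)$ containing $\Gamma_0^\sigma$ as a normal subgroup of index $|G|$; discreteness and finite covolume then descend from $\Gamma_0^\sigma$, giving a torsion-free lattice with $X^\sigma \cong \faktor{\bB^n}{\Gamma^\sigma}$, as desired.

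The main obstacle I expect is making precise the assertion that $\mu_L$-polystability is preserved under the Galois action of $\sigma$. One has to verify that conjugation by $\sigma$ provides a rank- and slope-preserving bijection between $\theta$-invariant coherent subsheaves of $\Omega^1_{\bar{Y}}(\log D) \oplus \cO_{\bar{Y}}$ and $\theta^\sigma$-invariant coherent subsheaves of $\Omega^1_{\bar{Y}^\sigma}(\log D^\sigma) \oplus \cO_{\bar{Y}^\sigma}$. This should be formal since the Higgs bundle, the Higgs field, the polarization, and the slope all arise from algebraic data on $(\bar{Y}, D)$ that commutes with base change along $\sigma$, but the verification must be written out carefully and combined with the fact that direct summand decompositions witnessing polystability also transport through $\sigma$.
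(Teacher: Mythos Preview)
Your proposal is correct and follows essentially the same route as the paper's own proof: pass to a finite-index sublattice with unipotent parabolics, apply \cref{main2} on its smooth toroidal compactification, transport the Chern equality and $\mu_L$-polystability through $\sigma$ (exactly the verification you flag as the main obstacle, which the paper also sketches), invoke \cref{main} to conclude the conjugate cover is a ball quotient, and then descend via the finite \'etale map. The only cosmetic difference is that the paper does not insist on normality of the sublattice and dispatches the descent in one line (the universal cover of $X^\sigma$ coincides with that of $Y^\sigma$), whereas you spell out the lift of the $G$-action via Cartan's theorem; both arguments are equivalent.
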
 
When $\Gamma$ is arithmetic, \cref{corx} has been proved by  Kazhdan \cite{Kaz83}. When $\Gamma$ is non-arithmetic, it was     proved by Mok-Yeung \cite[Theorem 1]{MY93} and by Baldi-Ullmo \cite[Theorem 8.4.2]{BU20}.


In this paper we   obtain some byproducts, and let us mention a few.  We prove the Simpson-Mochizuki correspondence for principal system of log Hodge bundles over projective log pairs (see \cref{Tannakian}).  We  give a characterization of slope stability with respect to big and nef classes for  log Higgs bundles on   K\"ahler log pairs (see \cref{thm:criteria}). 
We also give a very simple proof of the negativity of kernels of Higgs fields of tame harmonic bundles by Brunebarbe \cite{Bru17} (originally by Zuo \cite{Zuo00} for system of log Hodge bundles), using some extension theorems of plurisubharmonic functions in complex analysis (see \cref{thm:negativity}). In the appendix written jointly with Beno\^it Cadorel, we prove a metric rigidity result for toroidal compactification of non-compact ball quotients (see \cref{thm:rigidity}). 
\subsection{A few histories} 
Since the main purpose of this paper is to prove the uniformization result  rather than the Miyaoka-Yau type inequality \eqref{eq:BM}, we shall only recall some earlier work related to the characterization of ball quotient, and we refer the readers to \cite{GKT16,GT16}  for more references on the  Miyaoka-Yau type inequalities.

Based on his proof of the Calabi conjecture \cite{Yau78}, Yau established   the inequality \eqref{eq:BM} when $X$ is a projective manifold and $D=\varnothing$ with $K_X$ ample.  He proved that $X$ is uniformized by the complex unit ball in   case of equality. 
Miyaoka-Yau inequality and uniformization result were extended to the context of compact
K\"ahler varieties with quotient singularities by Cheng-Yau \cite{CY86} using orbifold
Kahler-Einstein metrics. 
 A partial uniformization result for {smooth minimal models of general type} have been obtained by 
Zhang \cite{Zha09}. More recently, uniformization result has been   extended to projective varieties with klt singularities in the series of work \cite{GKPT19,GKPT19b} by Greb-Kebekus-Peternell-Taji.

All the above works dealt with \emph{compact}  varieties. A strong uniformization result was established by Kobayashi \cite{Kob84,Kob85}  in the case of \emph{open orbifold surfaces} (see also \cite{CY86}).  In \cite{CY86} Cheng-Yau  also gave a differential geometric characterization of quasi-projective ball
quotients of any dimensions using the method of bounded geometry in \cite{CY80}. At almost the same time, based on \cite{CY86},  Tian-Yau \cite{TY87} and Tsuji \cite{Tsu88}  independently established similar \emph{algebraic geometric} characterizations of non-compact  ball quotient of any dimension.  
 See also \cite{KNS89,Kob90,Yau93} for more related works on uniformization results. 

All these aforementioned uniformization results are built on the positivity of the (log) canonical sheaf of the varieties together with existence of K\"ahler-Einstein metrics. In \cite{Sim88}, Simpson established a remarkable uniformization result in terms of stability of Higgs bundles. We essentially follow his approaches in this paper. In next subsection, we shall recall his ideas and discuss  main difficulties in generalizing his methods to the context of \emph{non-compact 
  varieties}.
\subsection{Main strategy}\label{sec:difficulty}\label{sec:main}
We mainly follow Simpson's strategy \cite{Sim88} to prove \cref{main}. 
Let us explain our rough ideas in the proof of \cref{main} when the equality in \eqref{eq:BM} holds. 

\noindent {\bf Step 1.} Following Simpson in the compact setting, we first define  systems of log Hodge bundles over   log pairs. We   prove that,  a system of log Hodge bundles on a projective log pair with vanishing first and second Chern classes admits an \emph{adapted} Hodge metric. The proof is based on Mochizuki's celebrated theorem \cite[Theorem 9.4]{Moc06} on the existence of  harmonic metric, and   $\bC^*$-action invariant property of log Hodge bundles. 

\noindent {\bf Step 2.}  We generalize the result in Step 1 to the context of \emph{principal bundles}. Fix a Hodge group $G_0$. Following Simpson again, we  define a principal system of log Hodge bundles $(P,\tau)$ on log pairs $(X,D)$ with the structure group $K\subset G$, where $G$ is the complexification of $G_0$. Based on the result in Step 1 together with some similar Tannakian arguments in \cite{Sim90}, in \cref{Tannakian} we prove that if there is a faithful Hodge representation $\rho:G\to GL(V)$ for some polarized Hodge structure $(V=\oplus_{i+j=w}V^{i,j},h_V)$ so that the system of log Hodge bundles $(P\times_KV,d\rho(\tau))$ is $\mu_L$-polystable  with $\int_X ch_2(P\times_KV)\cdot c_1(L)^{\dim X-2}=0$, then there is a metric  reduction $P_H$ for $P|_{X-D}$     so that the triple $(P|_{X-D},\tau|_{X-D},P_H)$ gives rise to a \emph{principal variation of Hodge structures} on $X-D$.

\noindent {\bf Step 3.}    For the system of log Hodge bundles $(E:=\Omega_X^1(\log D)\oplus \cO_X,\theta)$ in \cref{main},    we first associate it a  principal system of log Hodge bundles $(P,\tau)$ in \cref{prop:converse}, whose Hodge group $G_0=PU(n,1)$ is of Hermitian type (see \cref{def:hermitian}). One can easily show that $c_2(P\times_K\kg)=c_2(\End(E)^\perp)=0$ when the equality in \eqref{eq:BM} holds, where $\End(E)^\perp$ denotes the trace free part of $\End(E)$.   By  \cref{prop:stability}, the system of log Hodge bundles  $(P\times_K\kg,d(Ad)(\tau))=(\End(E)^\perp,\theta_{End(E)^\perp})$ is also  slope polystable if $(E,\theta)$ is slope polystable. Since the adjoint representation $Ad:G\to GL(\kg)$ is a faithful Hodge representation, by the result in Step 2, there is a metric  reduction $P_H$ for $P|_{X-D}$     so that the triple $(P|_{X-D},\tau|_{X-D},P_H)$ gives rise to a \emph{principal variation of Hodge structures} on $X-D$. Since $\tau:T_X(-\log D)\to P\times_K\kg^{-1,1}$ is an isomorphism, this implies that the period map $p:\widetilde{X-D}\to \faktor{PU(n,1)}{U(n)}$ associated to $(P|_{X-D},\tau|_{X-D},P_H)$ from the universal cover $\widetilde{X-D}$ of $X-D$ to the period domain $\faktor{G_0}{K_0}=\faktor{PU(n,1)}{U(n)}$ is \emph{locally biholomorphic}.   For more details, see Step one of the proof of \cref{thm:equality}.

\noindent {\bf Step 4.} We have to prove that the period map $p$ in Step 3 is moreover a biholomorphism. Note that when $D=\varnothing$, this step is quite easy.  In \cref{rem:complete} we show that it suffices to prove that the hermitian metric $\tau^*h_H$ on $X-D$ is   \emph{complete}, where $h_H$ is the hermitian metric on $P\times_K\kg^{-1,1}|_{X-D}$ induced by the metric reduction $P_H$ together with the Killing form of $\kg$. This step is slightly involved and the readers can find it in Step two of the proof of \cref{thm:equality}. To be brief, we establish a precise model  metric (ansatz) for $(E,\theta)\otimes (E^*,\theta^*)$ locally around $D$ with at most log growth, and we prove that this local metric and $h_H$ are mutually 
bounded by one another using similar ideas in \cite[\S 4]{Sim90}. Based on this model metric, we obtain a precise norm estimates for $h_H$ near $D$, so that we can prove  that $\tau^*h_H$ is a complete metric on $X-D$.  This concludes that the universal cover of $X-D$ is the  unit ball $\faktor{PU(n,1)}{U(n)}$. 
 \subsection{Acknowledgments} This work owes a lot to the deep work   \cite{Sim88,Sim90,Sim92,Moc06}, to which I express my deepest gratitude. I sincerely thank  Professor  Carlos Simpson  for answering my questions, as well as his suggestions and    encouragements. I  thank Professor Takuro Mochizuki for sending me his personal notes on the proof of \cref{prop:stability}. I also thank Professors Jean-Pierre Demailly, Henri Guenancia, Emmanuel Ullmo, Shing-Tung Yau,   and Gregorio Baldi, Jiaming Chen, Chen Jiang, Jie Liu, Mingchen Xia for very helpful discussions and their remarks on this paper. My special thanks go to Beno\^it Cadorel for his very fruitful discussions on the toroidal compactification, which lead  to a joint appendix with him in this paper.  Last but not least, I am grateful to the referee  for his/her  careful readings and very helpful
 comments to improve this manuscript. 
 \section{Log Higgs bundles and system of log Hodge bundles}
\subsection{Higgs bundles and tame harmonic bundles}
	In this section we   recall the   definition of Higgs bundles and tame harmonic bundles. We refer the readers to  \cite{Sim88,Sim90,Sim92,Moc02,Moc07} for further details.
\begin{dfn}\label{Higgs}
	Let $X$ be a complex manifold. A \emph{Higgs bundle} on $X$  is a pair $(E,\theta)$ where $E$ is a holomorphic vector bundle with $\db_E$ its complex structure, and $\theta:E\to E\otimes \Omega^1_X$ is a holomorphic one form with value in $\End(E)$, say \emph{Higgs field},  satisfying $\theta\wedge\theta=0$.
\end{dfn}

Let  $(E,\theta)$ be a Higgs bundle  over a complex manifold $X$.  A smooth hermitian metric $h$ of $E$ is called \emph{harmonic} if   $D_h:=d_h +\theta+\overline{\theta}_h$ is flat. Here $d_h$ is the Chern connection of $(E,h)$, and $\overline{\theta}_h$ is the adjoint of $\theta$ with respect to $h$. 
\begin{dfn}[Harmonic bundle] A harmonic bundle on a complex manifold $X$ is triple $(E,\theta,h)$ where $(E,\theta)$ is 
	a Higgs bundle and $h$ is  a  harmonic metric for $(E,\theta)$.
\end{dfn}
A \emph{log pair} consists of an   $n$-dimensional complex manifold $X$,  and 
a simple normal crossing divisor $D$ on $X$.
\begin{dfn}(Admissible coordinate)\label{def:admissible} Let $p$ be a point of $X$, and assume that $\{D_{j}\}_{ j=1,\ldots,\ell}$ 
	be components of $D$ containing $p$. An \emph{admissible coordinate} around $p$
	is the tuple $(U;z_1,\ldots,z_n;\varphi)$ (or simply  $(U;z_1,\ldots,z_n)$ if no confusion arises) where
	\begin{itemize}
		\item $U$ is an open subset of $X$ containing $p$.
		\item there is a holomorphic isomorphism   $\varphi:U\to \Delta^n$ so that  $\varphi(D_j)=(z_j=0)$ for any
		$j=1,\ldots,\ell$.
	\end{itemize} 
	We shall write $U^*:=U-D$,   $U(r):=\{z\in U\mid |z_i|<r, \, \forall i=1,\ldots,n\}$ and $U^*(r):=U(r)\cap U^*$.  
\end{dfn}
Recall that the Poincar\'e metric $\omega_P$ on $(\Delta^*)^\ell\times \Delta^{n-\ell}$ is described as 
$$
\omega_P=\sum_{j=1}^{\ell}\frac{\sqrt{-1}dz_j\wedge d\bar{z}_j}{|z_j|^2(\log |z_j|^2)^2}+\sum_{k=\ell+1}^{n} \frac{\sqrt{-1}dz_k\wedge d\bar{z}_k}{(1-|z_k|^2)^2}. 
$$ 
\begin{dfn}[Poincar\'e growth]\label{def:Poincare}
Let $(X,D)$ be a log pair.  A   hermitian metric $\omega$ on $X-D$ has at most (resp. the same) \emph{Poincar\'e growth  near $D$}   if for any point $x\in D$, there is an admissible coordinate $(U;z_1,\ldots,z_n)$ centered at $x$ and a constant $C_U>0$ so that $\omega\leq C_U\omega_P$    (resp. $\omega\sim \omega_P$)  holds  over $U^*(r)$ for some $0<r<1$.   
\end{dfn}

\begin{rem}[Global K\"ahler metric with Poincar\'e growth]\label{rem:Poincare}
Let   $(X,\omega)$ be a compact K\"ahler  manifold and $D=\sum_{i=1}^\ell D_i$ is a simple normal crossing divisor on $X$. By Cornalba-Griffiths \cite{CG75}, one can construct a \emph{K\"ahler current} $T$ over $X$, whose restriction on $X-D$ is a complete K\"ahler form, which has the same Poincar\'e growth near $D$ as follows.

   Let $\sigma_i$ be the section $H^0(X,\cO_X(D_i))$ defining $D_i$, and we pick any smooth metric $h_i$ for the line bundle $\cO_X(D_i)$.  One can prove that the closed $(1,,1)$-current  \begin{align}\label{eq:metric}
T:=\omega-\hess \log (-\prod_{i=1}^{\ell}\log  |\ep\cdot \sigma_i|_{\cdot h_i}^2),
\end{align} 
 the desired K\"ahler current when $0<\ep\ll 1$.
\end{rem}

\subsection{Log Higgs bundle and adapted harmonic metrics}
Throughout this paper, we mainly consider  \emph{log Higgs bundles} $(E,\theta)$ over  log pairs.
\begin{dfn}[Log Higgs bundles]
Let $(X,D)$ be a log pair. A log Higgs bundle consists of a pair $(E,\theta)$ with  $E$ a holomorphic vector bundle on $X$ and $\theta:E\to E\otimes \Omega_X^1(\log D)$ with $\theta\wedge\theta=0$.  
\end{dfn}

\begin{dfn}[Adapted harmonic metric]\label{def:adapted}
	Let $(X,D)$ be a log pair, and let $(E,\theta)$ be a log Higgs bundle on $(X,D)$. Suppose that $h$  is a harmonic metric for the Higgs bundle  $(E,\theta)|_{X-D}$. It is called    \emph{adapted to} the log Higgs bundle if for any admissible coordinate $(U;z_1,\ldots,z_n)$ and any $(a_1,\ldots,a_\ell)\in [0,1)^{\ell}$, one has
	$$
	E(U)=\{\sigma\in E(U-D)\mid |\sigma|_h=\cO(\prod_{i=1}^{\ell}|z_i|^{-a_i-\varepsilon})\ \mbox{ for any }\ep>0  \}
	$$ 
\end{dfn}
In the terminology of \cite{Moc06,Moc07}, the above definitions are equivalent  that $(E,\theta)$ is a parabolic Higgs bundle  with trivial parabolic structures over $(X,D)$ of weight $(0,\ldots,0)$, and the harmonic bundle $h$ for $(E,\theta)|_{X-D}$  is adapted to its parabolic structures.
  
\subsection{Slope stability}\label{sec:stability}
	Let $(X,\omega)$ be a compact K\"ahler manifold of dimension $n$ and let $D$ be a simple normal crossing divisor on $X$. Let $(E,\theta)$ be a log Higgs bundle on $(X,D)$.  Let $\alpha$ be a big and nef cohomology $(1,1)$-class on $X$. For any  torsion free coherent sheaf $F$, its \emph{degree with respect to $\alpha$} is defined by $\deg_\alpha(F):=c_1(F)\cdot \alpha^{n-1}$, and its \emph{slope with respect to $\alpha$} is defined by
 $\mu_{\alpha}(F):=\frac{\deg_\alpha(F)}{\rank\, F}$.  Consider a log Higgs bundle $(E,\theta)$ on $(X,D)$. A \emph{Higgs sub-sheaf} is a  saturated coherent
	torsion free subsheaf $E'\subset E$ so that $\theta(E')\subset E'\otimes \Omega^1_X(\log D)$.   We say $(E,\theta)$ is \emph{$\mu_\alpha$-stable}  if  for Higgs sub-sheaf $E'$ of $E$, with $0<\textnormal{rank}\,
E'<\textnormal{rank}\, E$, the condition 
 $  \mu_{\alpha}(E')<\mu_\alpha(E)$ 
is satisfied. $(E,\theta)$  is $\mu_\alpha$-\emph{polystable} if it is a direct sum of $\mu_\alpha$-stable log Higgs bundles with the same slope. 

When $\alpha=\{\omega\}$ where $\omega$ is a K\"ahler form on $X$, we write $\mu_\omega$ instead of $\mu_{\alpha}$. When $\alpha=c_1(L)$ for some    ample line bundle $L$  on $X$, we use the notation $\mu_L$ instead of $\mu_\alpha$.

By Simpson \cite{Sim90}, there is a $\bC^*$-action on log Higgs bundles $(E,\theta)$ defined by $(E,t\theta)$ for any $t\in \bC^*$. It follows from the definition that, if $(E,\theta)$ is $\mu_\alpha$-stable, then $(E,t\theta)$ is also $\mu_\alpha$-stable for any $t\in \bC^*$.

The following celebrated \emph{Simpson correspondence for tame harmonic bundles} proved by Mochizuki \cite{Moc06} is a crucial ingredient in this paper.  
\begin{thm}[Mochizuki]\label{thm:mochizuki} 
Let $(X,D)$ be a projective log pair endowed with an ample polarization $L$. A log Higgs bundle $(E,\theta)$ on $(X,D)$ is $\mu_L$-polystable with $\int_Xc_1(E)\cdot c_1(L)^{\dim X-1}=\int_X ch_2(E)\cdot c_1(L)^{\dim X-2}=0$ if and only if there is a  harmonic metric $h$ for $(E|_{X-D},\theta|_{X-D})$ adapted to $(E,\theta)$. When $(E,\theta)$ is moreover stable, such a harmonic metric $h$ is unique up to some positive constant multiplication.
\end{thm}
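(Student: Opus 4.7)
My plan is to derive \cref{thm:mochizuki} as a specialization of Mochizuki's general Kobayashi-Hitchin correspondence for \emph{tame parabolic Higgs bundles} on projective log pairs \cite{Moc06}. The task is therefore mainly one of translation: I will cast a log Higgs bundle $(E,\theta)$ on $(X,D)$ as a parabolic Higgs bundle by equipping $E$ with the \emph{trivial} parabolic filtration (all weights equal to $0$ along every component of $D$), and then check that each hypothesis and conclusion of the statement is the weight-zero shadow of Mochizuki's general result.

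First, I would verify the numerical dictionary. For a weight-zero parabolic structure the parabolic first Chern class and parabolic second Chern character reduce to the ordinary $c_1(E)$ and $ch_2(E)$, so the vanishing conditions $\int_X c_1(E)\cdot c_1(L)^{n-1}=0$ and $\int_X ch_2(E)\cdot c_1(L)^{n-2}=0$ match Mochizuki's parabolic numerical hypotheses exactly. Next, I would check that $\mu_L$-(poly)stability in the sense of \cref{sec:stability} coincides with the parabolic $\mu_L$-(poly)stability used in \cite{Moc06} in this trivial-weight case: any saturated Higgs subsheaf $E'\subset E$ as defined here inherits the trivial parabolic structure, so the ordinary slope agrees with the parabolic slope, and conversely any parabolic sub-Higgs sheaf with weight zero is an ordinary saturated Higgs subsheaf.

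Next, I would match the analytic side. Mochizuki's theorem produces a tame pluriharmonic metric $h$ on $(E,\theta)|_{X-D}$ which is \emph{adapted} to the parabolic structure, meaning that locally around any boundary point the filtration $_{\bm{a}}E$ is characterized by the norm bound $|\sigma|_h=\cO(\prod_i|z_i|^{-a_i-\varepsilon})$ for every $\varepsilon>0$. For the weight-zero structure this is exactly the condition in \cref{def:adapted}. Conversely, given an adapted harmonic metric, the tameness is automatic from the norm estimate, the parabolic filtration reconstructed from the norm behavior is automatically trivial, and Mochizuki's Bogomolov-Gieseker inequality for tame harmonic bundles (\cite[Theorem 6.5]{Moc06}) together with the classical curvature computation (using flatness of $D_h$ and $L^2$-control of $\Theta_h$ against $c_1(L)^{n-2}$) yields both the Chern class vanishing and the $\mu_L$-polystability. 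Finally, the uniqueness-up-to-scalar in the stable case is the direct specialization of Mochizuki's uniqueness statement, since the automorphism group of a stable object reduces to $\bC^*$.

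The main obstacle, and the only non-cosmetic point in the translation, is confirming that the growth characterization in \cref{def:adapted} really does force the parabolic structure to be trivial rather than, say, having jumps at negative integer shifts. This amounts to applying the Simpson-Mochizuki norm estimate for tame harmonic bundles to show that the filtration recovered from the $h$-asymptotics of holomorphic sections jumps only at integer weights and agrees on $(-1,0]^\ell$ with $_{\bm{0}}E=E$, which in turn ensures the harmonic metric produced by Mochizuki is indeed the one adapted to $(E,\theta)$ in the sense used throughout the rest of this paper.
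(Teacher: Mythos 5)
Your proposal is correct and takes essentially the same route as the paper: the paper offers no independent proof of \cref{thm:mochizuki}, but quotes it as Mochizuki's Kobayashi--Hitchin correspondence from \cite{Moc06}, having already observed (as you do) that a log Higgs bundle with an adapted harmonic metric is exactly a tame parabolic Higgs bundle with trivial parabolic structure of weight $(0,\ldots,0)$ in Mochizuki's terminology. The dictionary you spell out — parabolic slopes and Chern classes reducing to the ordinary ones, adaptedness in the sense of \cref{def:adapted} matching adaptedness to the trivial filtration, and uniqueness up to scalar in the stable case — is precisely the content of that identification.
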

Let us mention that in \cite{Biq97} Biquard has proved a stronger theorem when the divisor $D$ in \cref{thm:mochizuki} is smooth.  

The poly-stability is also preserved under tensor product and dual by Mochizuki \cite[Proposition 4.10]{Moc20}.
\begin{thm}[Mochizuki]\label{prop:stability}
Let $(X,D)$ be a projective log pair endowed with an ample polarization $L$.	Let $(E,\theta)$ be a $\mu_L$-polystable log Higgs bundle on $(X,D)$. Then the tensor product $T^{a,b}(E,\theta)$ is still a $\mu_L$-polystable log Higgs bundle for $a,b\in \bZ_{\geq 0}$.  Here $T^{a,b}(E,\theta):=\big(\hom(E^{\otimes a},E^{\otimes b}),\theta_{a,b}\big)$ is the induced log Higgs bundle by taking the tensor product.
\end{thm}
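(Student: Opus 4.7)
The plan is to follow the Kobayashi--Hitchin strategy in the log Higgs setting, using Mochizuki's correspondence (Theorem~\ref{thm:mochizuki}) as the template: produce a suitable analytic Hermite--Einstein metric on $(E,\theta)|_{X-D}$, transport it to $T^{a,b}(E,\theta)|_{X-D}$ through the standard tensorial construction, and then apply the analytic-to-algebraic direction of the correspondence to deduce polystability of the tensor product.

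First I would reduce to the case where $(E,\theta)$ is $\mu_L$-\emph{stable}. Polystability gives a Higgs-direct-sum decomposition $E=\bigoplus_i E_i$ with stable factors of common slope $\mu_L(E)$, and the functor $T^{a,b}$ distributes over direct sums into Higgs subbundles of slope $(b-a)\mu_L(E)$. Hence it suffices to show that tensor products of (possibly different) such stable factors are $\mu_L$-polystable.

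Next I would produce a Hermite--Einstein metric on $(E,\theta)|_{X-D}$. Theorem~\ref{thm:mochizuki} as stated is not enough, since for an arbitrary stable $(E,\theta)$ one cannot expect the Chern-class integrals $c_1(E)\cdot c_1(L)^{n-1}$ and $\mathrm{ch}_2(E)\cdot c_1(L)^{n-2}$ to vanish simultaneously: a $\bQ$-line-bundle twist $M$ can make $c_1$ orthogonal to $c_1(L)^{n-1}$ without affecting polystability of $T^{a,b}$ (via $T^{a,b}(E\otimes M,\theta)\cong T^{a,b}(E,\theta)\otimes M^{\otimes(b-a)}$), and Mochizuki's Bogomolov--Gieseker inequality \cite[Theorem~6.5]{Moc06} combined with the Hodge index theorem gives $\mathrm{ch}_2(E)\cdot c_1(L)^{n-2}\le 0$, but equality cannot in general be arranged. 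The clean way around is to invoke Mochizuki's stronger existence theorem for Hermite--Einstein metrics on $\mu_L$-polystable (parabolic) log Higgs bundles without Chern-class restrictions: such a metric $h$ is smooth on $X-D$, satisfies $\sqrt{-1}\Lambda_L(F_h+[\theta,\bar\theta_h])=c\cdot\mathrm{id}_E$ with $c$ determined by $\mu_L(E)$, and exhibits the adapted growth near $D$ dictated by the trivial parabolic structure.

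Given such an $h$, the induced metric $h^{(a,b)}$ on $T^{a,b}(E,\theta)|_{X-D}$ inherits the Hermite--Einstein equation with constant $(b-a)\mu_L(E)$, because the tensor/dual of Hermite--Einstein metrics is Hermite--Einstein with additive constants; and adaptedness is preserved because it is encoded by polynomial $(-\log|z_i|)$-weighted norm estimates in admissible coordinates, and these estimates behave well under $\otimes$, $\oplus$, and duals. The converse of the Kobayashi--Hitchin correspondence, namely that an adapted Hermite--Einstein metric forces the underlying log Higgs bundle to be $\mu_L$-polystable, then finishes the argument. The main obstacle I anticipate is this third step combined with the analytic existence in the second: producing a Hermite--Einstein metric on a polystable log Higgs bundle with arbitrary Chern characters demands Mochizuki's full analytic machinery (boundary norm estimates and a Donaldson-type heat flow on quasi-projective bases), and checking that adaptedness tensors correctly requires a careful inspection of his norm control near $D$, which is the technical heart of \cite[Proposition~4.10]{Moc20}.
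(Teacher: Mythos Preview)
Your strategy is correct in spirit, but it differs from the paper's argument in one essential simplification. Rather than confronting the higher-dimensional Hermite--Einstein existence problem head-on (which, as you note, forces you to invoke Mochizuki's full analytic machinery for polystable parabolic Higgs bundles with arbitrary Chern classes), the paper first applies a Mehta--Ramanathan type restriction theorem \cite[Proposition~3.29]{Moc06}: $T^{a,b}(E,\theta)$ is $\mu_L$-polystable if and only if its restriction to a general complete intersection curve $Y\subset X$ is. This reduces everything to $\dim X=1$, where the needed Hermitian--Yang--Mills metric on $(E,\theta)|_{X-D}$ is supplied directly by Simpson \cite{Sim90} or Biquard \cite[Th\'eor\`eme~8.1]{Biq97} with no Chern-class hypothesis, and is moreover adapted to log order. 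Tensoring this metric and invoking \cref{thm:criteria} (the analytic-to-algebraic direction proved later in the paper) finishes the argument.

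What your approach buys is conceptual directness: you never leave the original variety and you treat the problem as a pure Kobayashi--Hitchin statement. What the paper's approach buys is economy---by passing to curves it sidesteps the hardest part of \cite{Moc06,Moc20} (higher-dimensional HYM existence with nontrivial Chern data), and instead relies only on the one-dimensional theory plus a restriction theorem. Your worry about the second and third steps is exactly the obstacle the Mehta--Ramanathan reduction is designed to avoid.
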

Since  \cite[Proposition 4.10]{Moc20} worked with the much more general case than what we need, we shall provide a quick proof for \cref{prop:stability} for completeness sake. The idea essentially follows   \cite[Corollary 3.8]{Sim92} in the compact setting.
\begin{proof}[Proof of \cref{prop:stability}] 
By the Mehta-Ramanathan type theorem proved by Mochizuki \cite[Proposition 3.29]{Moc06}, $T^{a,b}(E,\theta)$ is $\mu_L$-polystable if and only if $T^{a,b}(E,\theta)|_{Y}$  is $\mu_L$-polystable, where $Y$ denotes a complete intersection of sufficiently ample general hypersurfaces in $X$. This enables us to reduce the desired statement to the case of curves. Assume now that $\dim  X=1$. By \cite{Sim90} or \cite[Th\'eor\`eme 8.1]{Biq97}, $(E,\theta)|_{X-D}$ admits a Hermitian-Yang-Mills metric $h$:  $$\Lambda_{\omega}F_h(E)=\lambda\otimes \vvmathbb{1}_E,$$ where $\omega$ is some K\"ahler form in $c_1(L)$, and $\lambda$ is some topological constant.  Moreover, $h$ is  adapted to  $(E,\theta)$, and is adapted to log order in the sense of \cref{def:log order}. Hence $(h^*)^{\otimes a}\otimes h^{\otimes b}$ is the Hermitian-Yang-Mills metric  for $T^{a,b}(E,\theta)|_{X-D}$, which is  also adapted to log order. It follows from \cref{thm:criteria} below that $T^{a,b}(E,\theta)$ is also $\mu_L$-polystable. 
	\end{proof}
\subsection{Simpson-Mochizuki correspondence for systems of log Hodge bundles}
A typical and important class of log Higgs bundle is the \emph{system of log Hodge bundles}.  In this subsection, we shall apply \cref{thm:mochizuki} to prove the \emph{Simpson-Mochizuki correspondence for systems of log Hodge bundles}.
\begin{dfn}[System of log Hodge bundles]
	Let $(E,\theta)$ be a log Higgs bundle on a   log pair $(X,D)$. We say that $(E,\theta)$  is a   \emph{system of log Hodge bundles} if there is a decomposition of $E$ into holomorphic vector bundles $E:=\oplus_{p+q=w}E^{p,q}$ such that
	$$
	\theta:E^{p,q}\to E^{p-1,q+1}\otimes \Omega_X^1(\log D).
	$$
	When $D=\varnothing$, such $(E,\theta)$ is called a   \emph{system of Hodge bundles}.  A  system of log Hodge bundles  is $\mu_{\alpha}$-(poly)stable  if it is $\mu_\alpha$-(poly)stable in the sense of log Higgs bundles.
\end{dfn}
\begin{dfn}[Hodge metric]
	Let $(E:=\oplus_{p+q=w}E^{p,q},\theta)$  be a system of Hodge bundles on a complex manifold $X$. A hermitian metric $h$ for $E$ is called a \emph{Hodge metric} if $h$ is harmonic, and it is a direct sum  of metrics on the bundles $E^{p,q}$.
\end{dfn}
By Simpson \cite{Sim88}, a system of Hodge bundles equipped with a Hodge metric is equivalent to a \emph{complex variation of Hodge structures}. 
He then established his correspondence for Hodge bundles over compact K\"ahler manifolds in \cite[Proposition 8.1]{Sim88}.  In the rest of this subsection, we will   extend his result to the log setting. 
  

Let us state and prove the main result in this subsection.
\begin{proposition}\label{prop:SM correspondence}
	Let $(X,D)$ be a projective log pair. Let $(E,\theta)=(\oplus_{p+q=w}E^{p,q},\theta)$ be a system of log Hodge bundles on $(X,D)$ which is $\mu_L$-polystable with $\int_Xc_1(E)\cdot c_1(L)^{\dim X-1}=\int_X ch_2(E)\cdot c_1(L)^{\dim X-2}=0$.   Then there is a  decomposition $(E,\theta)=\oplus_{i\in I}(E_i,\theta_i)$ where each $(E_i,\theta_i)$ is $\mu_L$-stable system of log Hodge bundles so that there is a Hodge metric $h_i$ (unique up to a positive multiplication) for $(E_i|_{X-D},\theta_i|_{X-D})$ which is adapted to  $(E_i,\theta_i)$.
\end{proposition}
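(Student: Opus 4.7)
The plan is to combine Mochizuki's correspondence (\cref{thm:mochizuki}) with the natural $\bC^*$-action on log Higgs bundles that fixes systems of log Hodge bundles, following the strategy Simpson used in \cite[Proposition 8.1]{Sim88} in the compact smooth case. The argument splits into three stages: decompose $(E,\theta)$ into stable summands on which \cref{thm:mochizuki} applies, produce an adapted harmonic metric on each summand, and then use the $\bC^*$-symmetry together with the uniqueness of the harmonic metric to force that metric to respect the Hodge grading.

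First I will decompose $(E,\theta)=\oplus_{i\in I}(E_i,\theta_i)$ into $\mu_L$-stable summands of slope zero, so $c_1(E_i)\cdot c_1(L)^{n-1}=0$ with $n=\dim X$. Before invoking \cref{thm:mochizuki} on each piece, I need to check $\int_X ch_2(E_i)\cdot c_1(L)^{n-2}=0$. By additivity these numbers sum to $\int_X ch_2(E)\cdot c_1(L)^{n-2}=0$, while Mochizuki's Bogomolov-Gieseker inequality for $\mu_L$-stable log Higgs bundles combined with the Hodge-index theorem applied to $c_1(E_i)$ gives the chain of inequalities $\int_X ch_2(E_i)\cdot c_1(L)^{n-2}\leq \tfrac{1}{2r_i}\int_X c_1(E_i)^2\cdot c_1(L)^{n-2}\leq 0$; since they sum to $0$, each individually vanishes. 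Applying \cref{thm:mochizuki} to each stable summand then produces a harmonic metric $h_i$ on $(E_i,\theta_i)|_{X-D}$, adapted to $(E_i,\theta_i)$ and unique up to a positive real scalar.

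The next step introduces the $\bC^*$-action $\phi_t\in\End(E)$ defined by $\phi_t|_{E^{p,q}}=t^p\cdot\mathrm{id}$. The condition $\theta(E^{p,q})\subset E^{p-1,q+1}\otimes\Omega_X^1(\log D)$ shows $\phi_t^{-1}\circ\theta\circ\phi_t=t\theta$, so $\phi_t$ is a log Higgs bundle isomorphism $(E,\theta)\xrightarrow{\sim}(E,t\theta)$. Comparing the two stable decompositions $\oplus_i(\phi_t(E_i),t\theta|_{\phi_t(E_i)})$ and $\oplus_i(E_i,t\theta_i)$ of $(E,t\theta)$, uniqueness (together with $\phi_1=\mathrm{id}$ and continuity in $t$) forces $\phi_t(E_i)=E_i$ for every $i$ and $t$. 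Since the generic eigenspaces of $\phi_t$ are precisely the $E^{p,q}$, the $\phi_t$-invariance of $E_i$ yields a decomposition $E_i=\oplus_{p,q}(E_i\cap E^{p,q})$ preserved by $\theta_i$, so each $(E_i,\theta_i)$ is itself a $\mu_L$-stable system of log Hodge bundles.

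Finally I will show that $h_i$ is in fact a Hodge metric. For $t=e^{\sqrt{-1}\alpha}$ with $\alpha\in\bR$, the Hitchin equation is invariant under $\theta\mapsto e^{\sqrt{-1}\alpha}\theta$, because the $h_i$-adjoint rescales as $\overline{\theta}_{h_i}\mapsto e^{-\sqrt{-1}\alpha}\overline{\theta}_{h_i}$; consequently $h_i$ remains harmonic on $(E_i,e^{\sqrt{-1}\alpha}\theta_i)$ and so $\phi_{e^{\sqrt{-1}\alpha}}^*h_i$ is harmonic on $(E_i,\theta_i)$. Uniqueness forces $\phi_{e^{\sqrt{-1}\alpha}}^*h_i=c(\alpha)h_i$ with $c(\alpha)\in\bR_{>0}$. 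But a direct computation gives $\phi_{e^{\sqrt{-1}\alpha}}^*h_i(u,v)=e^{\sqrt{-1}\alpha(p-p')}h_i(u,v)$ for $u\in E_i^{p,q}$ and $v\in E_i^{p',q'}$, so demanding a positive real multiple uniformly in $\alpha$ forces $h_i(E_i^{p,q},E_i^{p',q'})=0$ whenever $p\neq p'$, which is precisely the definition of a Hodge metric. I expect the main obstacle to be the Chern-class accounting in the first stage, where I need to transfer the vanishing of $\int_X ch_2\cdot c_1(L)^{n-2}$ from $E$ to each stable summand via \textbf{BGI} and the Hodge-index theorem; once that hurdle is cleared, the remainder is a formal consequence of the $\bC^*$-symmetry and the uniqueness clause in \cref{thm:mochizuki}.
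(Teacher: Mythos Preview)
Your overall strategy --- Mochizuki's correspondence plus the $U(1)$-action and uniqueness of the adapted harmonic metric --- is the same as the paper's, and your final stage showing each $h_i$ is a Hodge metric is essentially identical to the paper's argument in the stable case. There are two substantive differences in the earlier stages.

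First, your Chern-class accounting via Bogomolov--Gieseker and the Hodge index theorem is correct but is extra work the paper avoids: it invokes directly Mochizuki's canonical decomposition \cite[Corollary 3.11, Theorem 9.1, Propositions 5.1--5.3]{Moc06}, which simultaneously produces pairwise non-isomorphic stable summands $(E_i,\theta_i)\otimes\bC^{p_i}$ \emph{and} their adapted harmonic metrics, so no separate verification of $\int_X ch_2(E_i)\cdot c_1(L)^{n-2}=0$ is needed.

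Second, there is a genuine gap in your claim that continuity and $\phi_1=\mathrm{id}$ force $\phi_t(E_i)=E_i$. When two stable summands are isomorphic, say $(E_j,\theta_j)\simeq(E_k,\theta_k)$, the stable subbundles inside the isotypic block $E_j\oplus E_k$ form a $\bP^1$ rather than a discrete set, so continuity only shows that $\phi_t$ preserves each isotypic component, not each individual $E_i$. The paper sidesteps this by arguing with isomorphism classes rather than subsheaves: for each $i$ the isomorphism $(E,\theta)\simeq(E,t\theta)$ yields $(E_i,t\theta_i)\simeq(E_{i_t},\theta_{i_t})$ for some $i_t\in I$; since $I$ is finite and $U(1)$ is uncountable, one finds $t_1,t_2$ with $t_1/t_2$ not a root of unity and $i_{t_1}=i_{t_2}$, whence $(E_i,\theta_i)\simeq(E_i,(t_1/t_2)\theta_i)$, and Simpson's lemma \cite[Lemma 4.1]{Sim90} (or \cite[Theorem 8]{Sim92}) then forces $(E_i,\theta_i)$ to be a system of log Hodge bundles. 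Your argument can be repaired along the same lines once the isotypic component is known to be $\phi_t$-invariant (since an isomorphism $(E_j,\theta_j)^{\oplus p_j}\simeq(E_j,t\theta_j)^{\oplus p_j}$ between sums of stable objects already forces $(E_j,\theta_j)\simeq(E_j,t\theta_j)$), but as written the step is incomplete.
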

\begin{proof}
	Let us first prove the proposition when $(E,\theta)$ is stable.  
	By \cite[Theorem 9.1 \& Propositions 5.1-5.3]{Moc06}, there is a harmonic metrics $h$ for $(E|_{X-D},\theta|_{X-D})$ which is adapted to $(E,\theta)$, and such a harmonic metric is unique up to a positive constant multiplication. We introduce  automorphism $f_t:E\to E$ of $E$ parametrized by $t\in U(1)$, defined by
	\begin{align}\label{eq:Cstar}
f_t(\sum_{p+q=w}e^{p,q})=\sum_{p+q=w}t^pe^{p,q}.
	\end{align}
for every  $e^{p,q}\in E^{p,q}$. Then $f_t:(E,\theta)\to (E,t\theta)$ is an isomorphism since $t\theta\circ f_t=f_t\circ \theta$. Hence by the uniqueness of harmonic metrics, there is a function $\lambda(t):U(1)\to \mathbb{R}^+$  such that
$$
f_t^*h=\lambda(t)\cdot h.
$$
For every  $e^{p,q}\in E^{p,q}$, one has 
$$\lambda(t)\cdot h(e^{p,q},e^{p,q})=f_t^*h(e^{p,q},e^{p,q})=h(f_t(e^{p,q}),f_t(e^{p,q}))=|t^p|^2h(e^{p,q},e^{p,q})=h(e^{p,q},e^{p,q})$$
Hence $\lambda(t)\equiv 1$ for $t\in U(1)$, namely $f_t^*h=h$. On the other hand,
$$
h(e^{p,q},e^{r,s})=f_t^*h(e^{p,q},e^{r,s})=h(f_t(e^{p,q}),f_t(e^{r,s}))=t^pt^{-r}h(e^{p,q},e^{r,s})
$$
for any $t\in U(1)$. Therefore, $h(e^{p,q},e^{r,s})=0$ if $p\neq r$. 
Hence $h$ is a direct sum of hermitian metrics for $E^{p,q}$, namely $h$ is a Hodge metric. The proposition is proved if $(E,\theta)$ is stable.
	 
Let us prove the general cases. 
By \cite[Corollary 3.11 \& Theorem 9.1 \& Propositions 5.1-5.3]{Moc06}, there is a \emph{canonical and unique}  decomposition $(E,\theta)=\oplus_{i\in I}(E_i,\theta_i)\otimes \bC^{p_i}$ where $I$ is a finite set and harmonic metrics $h_i$ for $(E_i|_{X-D},\theta_i|_{X-D})$ which is adapted to $(E_i,\theta_i)$ so that $(E_i,\theta_i)$ is a $\mu_L$-stable log Higgs bundle. By the above arguments, it suffices to prove that each $(E_i,\theta_i)$ is system of log Hodge bundles. Since $(E,\theta)$ is a system of log Hodge bundles,   $(E,t\theta)$ is isomorphic to $(E,\theta)$ for any $t\in U(1)$. We have the following decomposition $(E,t\theta)=\oplus_{i}(E_i,t\theta_i)\otimes \bC^{p_i}$. Note that $(E_i,t\theta_i)$ is still $\mu_L$-stable. By the uniqueness of the decomposition, $(E_i,t\theta_i)\simeq (E_{i_t},\theta_{i_t})$ for some $i_t\in I$.  Since   $I$ is a finite set,  there exists $t_1,t_2$ so that $t_1/t_2$ is not a root of unity and $i_{t_1}=i_{t_2}$. In other words, $(E_i,t_1\theta_i)\simeq (E_i,t_2\theta_i)$. By \cite[Lemma 4.1]{Sim90} or \cite[Theorem 8]{Sim92}, $(E_i,t_1\theta_i)$ is a system of  log Hodge bundles, and so is $(E_i,\theta_i)$.  Hence $(E,\theta)$ is a direct sum of $\mu_L$-stable system of log Hodge bundles $(E_i,\theta_i)$, and each $(E_i|_{X-D},\theta_i|_{X-D})$ admits  a Hodge metric $h_i$ adapted to $(E_i,\theta_i)$. The proposition is proved.
\end{proof}
\section{Principal system of log Hodge bundles}
In this section, we will extend Simpson's \emph{principal system of log Hodge bundles} in \cite[\S 8]{Sim88} to the log setting. We will provide all necessary proofs for the claims for completeness sake. Let us mention that most results in this section follows  from \cite[\S 8 \& \S 9]{Sim88} with   minor changes.

Let $G_0$ be a real   algebraic group which is semi-simple   with its Lie algebra denoted by $\kg_0$. Let $G$ be the complexification of $G_0$ with its Lie algebra denoted by $\kg$. Then $\kg=\kg_0+\sn \kg_0$. $G_0$ is called a \emph{Hodge group} if the following conditions hold.
\begin{itemize}[leftmargin=0.4cm]
	\item The  Lie algebra $\kg$ of $G$ admits a Hodge structure of weight $0$, namely, one has a decomposition
	$$
	\kg=\oplus \kg^{p,-p}
	$$
	so that $[\kg^{p,-p},\kg^{q,-q}]\subset \kg^{p+q,-p-q}$.
	\item  If $\overline{\bullet}$ denotes the complex conjugation with respect to $\kg_0$, then $\overline{\kg^{p,-p}}=\kg^{-p,p}$.
	\item The   form \begin{align}\label{eq:hermitian}
	h_{\kg}(U,V):=(-1)^{p+1}Tr(ad_Uad_{\bar{V}})\quad \mbox{for}\quad U,V\in \kg^{p,-p}
	\end{align} is a positively definite hermitian metric for $\kg$. 
\end{itemize}  
let $K_0\subset G_0$ be the  Lie subgroup of $G_0$ so that its Lie algebra   $\kk_0$ is $\kg_0\cap \kg^{0,0}$. Let $K\subset G$ (resp. $\kk$) be the complexification of $K_0$ (resp. $\kk_0$), and thus the Lie algebra of $K$ is $\kk$. Then the restriction of the Killing form of $\kg_0$ on $\kk_0$ is positively definite, and thus $K_0$ is a compact real Lie group.   

In the rest of the paper, we shall use the above notations without recalling their meanings.

The following concrete example of the Hodge group    will be used in this paper, especially in the proof of \cref{main}.
\begin{example}  \label{ex}
Consider the a direct sum of  $\bC$-vector spaces
$$
V=\oplus_{i+j=w}V^{i,j}
$$
 Denote by $r_i:=\rank\, V^{i,j}$, and $r:=\rank\, V$.  Fix a hermitian metric   $h=\oplus_{i+j=w}h_{i}$  for $V$ where $h_i$ is a hermitian metric for $V^{i,j}$. We take a sesquilinear form $Q(u,v):=(\sn)^{i-j}h(u,v)$ for $u,v\in V^{i,j}$.  Define $G_0:=PU(V,Q)\simeq PU(p_0,q_0)$, where $p_0:=\sum_{i\ odd}r_i$ and $q_0:=\sum_{i\ even}r_i$. We shall show that $G_0$ is a \emph{Hodge group}. 
 
 First we note that the complexification of $G_0$ is $G:=PGL(V)\simeq PGL(r,\bC)$.  
Then the Lie algebra of $G$ is $\kg=\ks\kl(V)\simeq \ks\kl(r,\bC)$, and   the Lie algebra of $G_0$ is $\kg_0=\ks\ku(p_0,q_0)$. Let us define the \emph{Hodge decomposition} as follows:
 $$\kg^{p,-p}=\oplus_{i}\hom(V^{i,j},V^{i+p,j-p})\cap \ks\kl(V).$$
 Then $\kg=\oplus \kg^{p,-p}$.  One can check that $\overline{\kg^{p,-p}}=\kg^{-p,p}$, where the conjugate is taken with respect to the real form $\kg_0$ of $\kg$.
 
 Let $K$ be the subgroup of $G$ which fix each $V^{i,j}$. Then $K=P(\prod_{i+j=w}GL(V^{i,j}) )$, and its Lie algebra is 
 $
 \kk=\kg^{0,0}.
 $ 
 Define $K_0:=K\cap G_0=P(\prod_{i+j=w}U(V^{i,j},h_{i}))$, whose Lie algebra is $\kk_0=\kg^{0,0}\cap \kg_0$.
 
More precisely, if we fix a unitary frame  $e_1,\ldots,e_{p_0}$ for $(\oplus_{i odd}V^{i,j},\oplus_{i\ odd}h_{i})$ and  a unitary frame  $f_1,\ldots,f_{q_0}$ for $(\oplus_{i\ even}V^{i,j},\oplus_{i odd}h_{i})$,  elements in $\kg_0$ can be expressed as  the ones in  $M(r\times r,\bC)$ with the form
  $$
 \begin{bmatrix}
  A & C  \\
  C^* & B  
  \end{bmatrix}
  $$
  where $A\in \ku(p_0)$ and $B\in \ku(q_0)$ so that $Tr(A)+Tr(B)=0$. Note that the Killing form 
  $$
  Tr(ad_uad_{v})=2rTr(uv), 
  $$
  if we consider $u,v$  as elements in $\ks\kl(r,\bC)$. Moreover,  for $u\in \kg^{p,-p}$, one can show that 
  $$
 \overline{u}= \begin{cases}
-u^*\quad \mbox{if}\ p\ \mbox{is even}\\
u^*\quad \mbox{if}\ p\ \mbox{is odd}.
  \end{cases}
$$
where $u^*$ denotes the conjugate transpose of $u$.
  Hence    the hermitian metric $h_\kg$  defined in  \eqref{eq:hermitian} can be simply expressed as 
  $$
h_{\kg}(u,v)= 2rTr(uv^*)
  $$
 once we consider $u,v$  as elements in $\ks\kl(r,\bC)$. 
   In other words, for the natural inclusion $\iota:\kg\hookrightarrow \kg\kl(V)$, one has $h_{\kg}=2r\cdot \iota^*h_{End(V)}$, where $h_{End(V)}$ is the hermitian metric on $\End(V)$ induced by $h_V$. This fact is an important ingredient in the proof of \cref{main}.
   
\end{example}
Let us generalize Simpson's definition of \emph{principal system of  Hodge bundles} in \cite[\S 8]{Sim88} to the log setting as follows.
\begin{dfn}[Principal system of log Hodge bundles]
 A  \emph{principal system of log Hodge bundles} on  a log pair $(X,D)$ is a pair $(P,\tau)$, where $P$ is a holomorphic $K$-fiber bundle    endowed with a holomorphic map
	$$
\tau:T_X(-\log D)\to P\times_K\kg^{-1,1}
	$$
	such that $[\tau(u),\tau(v)]=0$. A \emph{metric}   for $P|_{X-D}$ is a reduction $P_H\subset P|_{X-D}$ whose structure group is $K_0$. Let $d_H$ be the Chern connection for $P_H$. Define $\overline{\tau}_H$ to be the complex conjugate of $\tau|_{X-D}$ with respect to the reduction $P_H$. Then
 $$
	\overline{\tau}_H\in \sC^{\infty}(X-D,(P_H\times_{K_0} \kg^{1,-1})\otimes \Omega_{X-D}^{0,1}).
	$$
 Set
\begin{align}\label{eq:induced flat} D_H:=d_H+\tau|_{X-D}+\overline{\tau}_H,\end{align}
which  is a connection on the smooth   $G_0$-bundle $P_H\times_{K_0} G_0$.  Such triple $(P|_{X-D},\tau|_{X-D},P_H)$ is called  a \emph{principal variation of Hodge structures} over $X-D$ of Hodge group $G_0$, if    the induced connection $D_H$ in \eqref{eq:induced flat}  is \emph{flat}, namely the curvature of $D_H$ is zero. 
\end{dfn}
\begin{rem}\label{rem:well-def}
	Note that the metric reduction $P_H$ for a principal system of  Hodge bundles $(P,\tau)$ on a complex manifold $X$ induces a hermitian metric $h_H$ on $P\times_K\kg\simeq P_H\times_{K_0}\kg$ defined by 
	\begin{align}\label{eq:metric induced}
	h_H\big((p,u),(p,v) \big):=h_{\kg}(u,v)
	\end{align}
	for any $p\in P_H$ and $u,v\in \kg$. Here $h_{\kg}$ is the hermitian metric defined in \eqref{eq:hermitian}.  Note that $K_0$ preserves the decomposition $
	\kg=\oplus_{p+q=w}\kg^{-p,p}$. It thus also preserves  $h_{\kg}$. Indeed,  for $u,v\in \kg^{-p,p}$ and $k\in K_0$, one has
	\begin{align*} 
(-1)^{p+1}h_{\kg}(Ad_ku,Ad_kv) =(-1)^{p+1}h_{\kg}(u,v).
	\end{align*}
	By the equivalence relation $(p,u)\sim (pk^{-1},Ad_{k}u)$, the metric $h_H$ is thus well-defined. 
\end{rem}

\begin{rem}[Period map of principal variation of Hodge structures]\label{rem}
By Simpson \cite[p. 900]{Sim88}, for a principal variation of Hodge structures  $(P,\tau,P_H)$ on a complex manifold $X$, one can also define its \emph{period map} as follows. 	 Denote by $\pi:\tilde{X}\to X$ the universal cover  of $X$. Set $(\tilde{P}:=\pi^*P,\tilde{\tau}:=\pi^*\tau,\tilde{P}_H:=\pi^*P_H)$, which is    a  principal variation of Hodge structures on the simply connected complex manifold $\tilde{X}$. The flat connection $D_H$ thus induces a flat trivialization $\tilde{P}_H\times_{K_0}G_0\simeq \tilde{X}\times G_0$. 
Denote by $\phi:\tilde{P}_H\to G_0$ the composition of the inclusion $\tilde{P}_H\subset \tilde{P}_H\times_{K_0}G_0\simeq \tilde{X}\times G_0$ and the projection $\tilde{X}\times G_0\to G_0$. It induces a map
	\begin{align}\label{eq:period}
		f:	\tilde{X}&\to \faktor{G_0}{K_0}=:\sD\\\nonumber
		\tilde{x}&\mapsto  \phi(e_x)\cdot K_0 \quad \forall e_x\in \tilde{P}_{H,\tilde{x}}.
	\end{align}
Alternatively, we view $G_0\to \sD$ as a principal $K_0$-fiber bundle over $\sD$, and its pull-back on $\tilde{X}$ via $f$ is nothing but the principal $K_0$-fiber bundle $\tilde{P}_H$ by our definition of $f$.
 Hence 
the  complexified   differential of $f$ is 
$$df^\bC:T^{\bC}_{\tilde{X}}\to f^*T^\bC_{\sD}\simeq f^*(G_0\times_{K_0}\oplus_{p\neq 0}\kg^{p,-p})=\tilde{P}_H\times_{K_0}\oplus_{p\neq 0}\kg^{p,-p}$$ 
One can prove that $df^\bC=\tilde{\tau}+\overline{\tilde{\tau}}_H$, where $\overline{\tilde{\tau}}_H$ is the conjugate of $\tilde{\tau}$ with respect to $\tilde{P}_H$. Hence  the restriction of $df^\bC$ to the holomorphic tangent bundle $T_{\tilde{X}}$ is $\tilde{\tau}$, which is a holomorphic map since the holomorphic tangent bundle of $\sD$ is $T_{\sD}\simeq G_0\times_{K_0}\oplus_{p<0}\kg^{p,-p}$. In conclusion, $f$ is a holomorphic map, which is called the \emph{period map}  associated to the principal variation of Hodge structures $(P,\tau,P_H)$, whose differential is given by $df=\tilde{\tau}$.
\end{rem}
The uniformization is  related by Hodge group of Hermitian type.
\begin{dfn}[\!\protecting{\cite[\S 9]{Sim88}}]\label{def:hermitian}
	A Hodge group $G_0$ is called \emph{Hermitian type} if the Hodge decomposition $\kg$ of the Lie algebra of $G$  is
	$$
	\kg=\kg^{-1,1}\oplus \kg^{0,0}\oplus \kg^{1,-1}
	$$
	and that $G_0$ has no compact factor.
	In this case, $K_0\subset G_0$ is the maximal compact subgroup and $\sD:=\faktor{G_0}{K_0}$ is a Hermitian symmetric space of   non-compact type.
\end{dfn}
Let us generalize the definition of \emph{uniformizing bundle} by Simpson \cite[\S 9]{Sim88} to the log setting.
\begin{dfn}[Uniformizing bundle]\label{def:uniformizing}
	Let $G_0$ be a Hodge group of Hermitian type. A \emph{uniformizing bundle} on a log pair $(X,D)$ is a principal system of log Hodge bundles $(P,\tau)$ such that 	 $
	\tau:T_X(-\log D)\stackrel{\simeq}{\to} P\times_K\kg^{-1,1}
	$ is an isomorphism. A \emph{uniformizing variation of Hodge structures} is a uniformizing bundle on a complex manifold $X$ together with a flat metric $P_H\subset P$.
	\end{dfn}

\begin{rem}[Uniformization via uniformizing bundles]\label{rem:complete}
It follows from \cref{def:uniformizing} that, for a uniformizing variation of Hodge structures $(P,\tau,P_H)$ over a complex manifold $X$, the period map $f:\tilde{X}\to \sD$ defined in \eqref{eq:period} is locally biholomorphic. This follows from the fact that $df=\tilde{\tau}$, which is isomorphic at any point of $\tilde{X}$ by the definition.   Recall that in \cref{rem:well-def} the metric reduction $P_H$ together with the positively definite form $h_{\kg}$ for $\kg$ in \eqref{eq:hermitian} induce  a metric $h_H$ for $P\times_K\kg^{-1,1}$.    For the period domain $\sD$ which is a hermitian symmetric space, one can also define the hermitian metric $h_{\sD}$ for $T_{\sD}\simeq G_0\times_{K_0} \kg^{-1,1}$ in a similar way.  By \cref{rem}, 
 $\tilde{P}_H=f^*G_0$ when we consider $G_0\to \sD$ as a principal $K_0$-fiber bundle over $\sD$.  One thus has 
\begin{align} \label{eq:pullback}
\pi^*\tau^*h_H=f^*h_{\sD}.
\end{align} 
 In other words,  $f:(\tilde{X},h_{\tilde{X}}:=\pi^*\tau^*h_H)\to (\sD,h_{\sD})$ is a \emph{local isometry}. Hence for the action of $\pi_1(X)$ on $\tilde{X}$,  the metric $h_{\tilde{X}}$ is invariant under this $\pi_1(X)$-action. If $\tau^*h_H$ is a complete metric, so is $\pi^*\tau^*h_H$, and by \cite[Theorem \rom{4}.1.2]{Cha06}, $f:\tilde{X}\to \sD$ is a Riemannian covering map, which is thus  a biholomorphism since $\tilde{X}$ and $\sD$ are both simply connected. In other words, $X$ is uniformized by the hermitian symmetric space $\sD$ when the metric $\tau^*h_H$ on $X$ is complete.
\end{rem}

One can construct systems of log Hodge bundles from principal ones via Hodge representations.
\begin{dfn}[\!\protecting{\cite[p. 900]{Sim88}}]\label{def:representation}
Let $(V=\oplus_{p+q=w} V^{p,q},h_V)$ be a polarized Hodge structure.	A \emph{Hodge representation} of $G_0$ is a complex representation $\rho:G\to GL(V)$ satisfying the following conditions.
\begin{itemize}[leftmargin=0.4cm]
	\item  The action of $\kg$ is compatible with Hodge type, and such that $K_0$ preserves Hodge type. In other words, 
	$$
	d\rho(\kg^{r,-r})(V^{p,q})\subset V^{p+r,q-r}
	$$
	and
	 $
\rho(K_0)(V^{p,q})\subset V^{p,q}.$\footnote{As remarked by Simpson \cite{Sim88}, this is not automatic if $K_0$ is not connected. However, in   \Cref{ex}, $K_0$ is always connected, and thus such condition will be superfluous in that case.}  
\item	The sesquilinear form  $Q$ defined by
\begin{align} \label{eq:sesquilinear}
Q(u,v):=(\sn)^{p-q}h_V(u,v) \quad \mbox{for} \quad u,v\in V^{p,q}
\end{align} 
	is $G_0$ invariant.  Namely, one has $\rho(G_0)\subset U(V,Q)$. 
\end{itemize}
\end{dfn}
\begin{example}\label{ex:adjoint}
 For the Hodge group $G_0$, $(\kg=\oplus_{p}\kg^{p,-p},h_{\kg})$ is a polarized Hodge structure of weight $0$, where $h_{\kg}$ is the polarization defined in \eqref{eq:hermitian} via the Killing form.  One can easily check that the adjoint representation $Ad:G\to GL(\kg)$ is a Hodge representation for this polarized Hodge structure.  
\end{example}
A principal system of log Hodge bundles together with a Hodge representation induces a system of log Hodge bundles as follows.

\begin{lem}\label{lem:Hodge}
If  $\rho:G\to GL(V)$ is a Hodge representation of the Hodge group $G_0$ and $(P,\tau)$ is  a principal system of log Hodge bundles on the log pair $(X,D)$, then $(E:=P\times_{K}V,\theta:=d\rho(\tau))$ is a system of log Hodge bundles. A polarization $h_V$ for $V$  together with a metric $P_H$ for $P|_{X-D}$ give a metric $h_E$ on the system of Hodge bundles $(E,\theta)|_{X-D}$ over $X-D$. When $(P|_{X-D},\tau|_{X-D},P_H)$ is a principal variation of Hodge structures over $X-D$, $(E|_{X-D},\theta|_{X-D},h_E)$
gives rise to a complex variation of Hodge structures.
\end{lem}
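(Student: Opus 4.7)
The plan is to check everything by functoriality of the associated bundle construction applied to the Hodge representation $\rho$. First I would verify the decomposition of $E$: since the hypothesis on $\rho$ in \cref{def:representation} forces $\rho(K_0)$ (and hence its complexification $\rho(K)$) to preserve the Hodge type, the splitting $V = \oplus_{p+q=w} V^{p,q}$ descends to a splitting of $E$ by holomorphic subbundles $E^{p,q} := P \times_K V^{p,q}$. The condition $d\rho(\kg^{-1,1}) V^{p,q} \subset V^{p-1, q+1}$ then shows that $\theta = d\rho(\tau)$ shifts the bigrading by $(-1,1)$, and $\theta \wedge \theta = 0$ follows from $[\tau(u), \tau(v)] = 0$ together with $d\rho$ being a Lie algebra homomorphism. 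This handles the first assertion.

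For the second assertion, I would note that the sesquilinear form $Q$ from \eqref{eq:sesquilinear} restricted to each $V^{p,q}$ agrees up to the scalar $(\sn)^{p-q}$ with $h_V$, and that distinct $V^{p,q}$ are $h_V$-orthogonal by the polarized Hodge structure axioms. Because $K_0 \subset G_0 \cap K$ preserves both the Hodge type and the form $Q$ by \cref{def:representation}, it preserves $h_V$ piece by piece. Under the identification $E|_{X-D} \simeq P_H \times_{K_0} V$ afforded by the reduction $P_H$, the metric $h_V$ therefore descends to a well-defined hermitian metric $h_E$ on $E|_{X-D}$ that is an orthogonal direct sum of metrics on the $E^{p,q}|_{X-D}$, which is precisely what it means for $h_E$ to be a Hodge metric.

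For the third assertion, assume $(P|_{X-D}, \tau|_{X-D}, P_H)$ is a principal variation of Hodge structures, so the connection $D_H = d_H + \tau + \overline{\tau}_H$ on the smooth $G_0$-bundle $P_H \times_{K_0} G_0$ is flat. Pushing $D_H$ forward through $\rho$ yields a flat connection on $P_H \times_{K_0} V \simeq E|_{X-D}$. By functoriality, $d_H$ induces the Chern connection of $(E|_{X-D}, h_E)$, $\tau$ induces $\theta = d\rho(\tau)$, and $\overline{\tau}_H$ induces the $h_E$-conjugate $\overline{\theta}_{h_E}$ of $\theta$. The only non-formal point is the identification $d\rho(\overline{\tau}_H) = \overline{\theta}_{h_E}$, which amounts to the statement that $d\rho$ intertwines the conjugation on $\kg$ determined by the real form $\kg_0$ with the $h_E$-adjoint on $\End(V)$; this is precisely the infinitesimal form of $\rho(G_0) \subset U(V,Q)$ combined with the relation between $Q$ and $h_V$ on each $V^{p,q}$. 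Once this is in hand, flatness of $d_{h_E} + \theta + \overline{\theta}_{h_E}$ on $E|_{X-D}$ is exactly the definition of a complex variation of Hodge structures on $(E|_{X-D}, \theta|_{X-D}, h_E)$. The whole argument is functorial bookkeeping; the main point requiring care is this compatibility between $\kg_0$-conjugation and $h_V$-adjunction.
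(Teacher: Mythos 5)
Your proposal is correct and follows essentially the same route as the paper: decompose $E=\oplus P\times_K V^{p,q}$ using that $\rho(K)$ preserves Hodge type, deduce $\rho(K_0)\subset\prod U(V^{p,q},h_{p,q})\subset U(V,h_V)$ to get a well-defined Hodge metric $h_E$ from $P_H$, and transport the flat connection $D_H=d_H+\tau+\overline{\tau}_H$ through $\rho$ to obtain flatness of $d_{h_E}+\theta+\overline{\theta}_{h_E}$, citing Simpson's equivalence with complex variations of Hodge structures. Your explicit remark that the key point is $d\rho(\overline{\tau}_H)=\overline{\theta}_{h_E}$, coming from the infinitesimal form of $\rho(G_0)\subset U(V,Q)$ and the relation between $Q$ and $h_V$, is exactly the compatibility the paper uses (there stated briefly via its equation defining $h_E$), so there is no gap.
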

\begin{proof}
By 	\cref{def:representation}, one has $\rho(K)(V^{p,q})\subset V^{p,q}$. Hence $E:=P\times_KV$  admits a   decomposition of holomorphic vector bundles $E=\oplus_{p+q=w}E^{p,q}$ with $E^{p,q}:=P\times_K V^{p,q}$. Let us define  $\theta:=d\rho(\tau) $.
Since $\tau:T_{X}(-\log D)\to P\times_K\kg^{-1,1}$ satisfies $[\tau(u),\tau(v)]=0$, and $d\rho(g^{-1,1})(V^{p,q})\subset V^{p-1,q+1}$,  one thus has $\theta: E^{p,q}\to E^{p-1,q+1}\otimes \Omega_X^1(\log D)$, with $\theta\wedge \theta=0$. Hence $(E,\theta)$ is a system of log Hodge bundles.   
	
Let us now prove that $\rho|_{K_0}:K_0\to GL(V)$ has image on $U(V,h_V)$. Since $\rho(K)(V^{p,q})\subset V^{p,q}$, one thus has
$$
\rho(K)\subset \prod_{p+q=w}GL(V^{p,q}).
$$
Since the  sesquilinear form $Q$ in \eqref{eq:sesquilinear} is $G_0$  invariant, one thus has 
$$
\rho(G_0)=U(V,Q).
$$
Hence
\begin{align}\label{eq:unitary}
\rho(K_0)\subset \rho(G_0\cap K)\subset  \prod_{p+q=w}U(V^{p,q},h_{p,q})\subset U(V,h_V).
\end{align}
Note that $E=P\times_KV\simeq P_H\times_{K_0}V$. We define the hermitian metric $h_E$ for $E$ by setting
\begin{align}\label{eq:fff}
h_E((p,u),(p,v)):=h_V(u,v) 
\end{align}
for any $p\in P_H$ and  for any $u,v\in V$. Since $\rho(K_0)\subset U(V,h_V)$, one can check as \cref{rem:well-def} that $h_E$ is well-defined.

If $(P|_{X-D},\tau|_{X-D},P_H)$ is a principal variation of Hodge structures on $X-D$, the connection $D_H:=d_H+\tau+\overline{\tau}_H$ is flat. By construction, the connection $D_{h_E}:=d_{h_E}+\theta+\overline{\theta}_{h_E}$ for $E|_{X-D}$ is also flat, where $d_{h_E}$ is the Chern connection for the metrized vector bundle $(E,h_E)$, and $\overline{\theta}_{h_E}$ is the conjugate of $\theta$ with respect to $h_E$. Indeed, it can be seen from that $d_{h_E}$ is naturally induced by $d_H$, $\theta:=d\rho(\tau)$, and $\overline{\theta}_{h_E}=d\rho(\overline{\tau}_H)$ by \eqref{eq:fff}. By \cite[p. 898]{Sim88}, the triple $(E|_{X-D},\theta|_{X-D},h_E)$ gives rise to a complex variation of Hodge structures on $X-D$.
	\end{proof}

Conversely, one can associate a system of log Hodge bundles with a principal one as follows. The following result shall be applied in the proof of \cref{main}.
\begin{proposition}\label{prop:converse}
	Let $(E,\theta)=(\oplus_{p+q=w}  E^{p,q},\theta)$ be a system of log Hodge bundles on a log pair $(X,D)$. Then there is a  principal system of log Hodge   bundles $(P,\tau)$  with the structure group $K$ associated to $(E,\theta)$, where $K$ is the semi-simple Lie group in \Cref{ex}. Moreover, any hermitian metric (not necessarily harmonic) $h:=\oplus_{p+q=w}h_p$ for $E|_{X-D}$ gives rise to a metric reduction $P_H$ for $P|_{X-D}$ with the structure group $K_0$ defined in   \Cref{ex}. 
\end{proposition}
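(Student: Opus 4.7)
The plan is to realize $P$ as the bundle of graded framings of $E$ modulo its center, and to show that $\theta$ descends naturally to the required map $\tau$. I first fix once and for all a reference graded vector space $V=\oplus_{p+q=w}V^{p,q}$ with $\rank V^{p,q}=\rank E^{p,q}$ together with reference hermitian metrics $h_{p,q}^{\rm ref}$ on each $V^{p,q}$, so that the groups $K$ and $K_0$ of \Cref{ex} are fixed. Let $P'\to X$ denote the principal $\prod_{p+q=w}GL(V^{p,q})$-bundle whose fiber at $x\in X$ consists of graded linear isomorphisms $\phi\colon V\xrightarrow{\sim} E_x$ carrying each $V^{p,q}$ onto $E_x^{p,q}$, and set $P:=P'/\bC^*$, where $\bC^*$ acts by diagonal scalars on $V$. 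Then $P$ is a principal $K$-bundle on $X$.

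To construct $\tau$, observe that because $\theta$ shifts Hodge type by $(-1,1)$ it factors through the subsheaf $\End^{-1}(E):=\oplus_{p}\hom(E^{p,q},E^{p-1,q+1})\subset \End(E)$, whose local sections are automatically trace-free. The adjoint representation of $\prod_{p+q=w} GL(V^{p,q})$ on $\kg=\ks\kl(V)$ is trivial on the central diagonal $\bC^*$ and hence descends to a representation of $K$; the resulting canonical isomorphism $P\times_K\kg\simeq \End(E)^\perp$ of holomorphic vector bundles restricts, under the decomposition $\kg=\oplus\kg^{p,-p}$, to $P\times_K\kg^{-1,1}\simeq \End^{-1}(E)$. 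I then define
\[
\tau\colon T_X(-\log D)\longrightarrow P\times_K\kg^{-1,1}
\]
to be the map corresponding to $\theta$ under this identification. The integrability $\theta\wedge\theta=0$ unfolds in a local trivialization to pairwise commutation of the matrix coefficients of $\theta$, which is precisely $[\tau(u),\tau(v)]=0$ for all local tangent vector fields $u,v$.

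For the metric part, given $h=\oplus_p h_p$ on $E|_{X-D}$, I define $P_H\subset P|_{X-D}$ to be the locus of frames $\phi$ for which each component $\phi|_{V^{p,q}}\colon (V^{p,q},h_{p,q}^{\rm ref})\to (E^{p,q}_x,h_p|_x)$ is a unitary isomorphism, taken modulo the residual action by scalars of modulus one. This yields a principal $K_0$-subbundle of $P|_{X-D}$, and a direct verification shows that the hermitian metric on $P\times_K\kg\simeq\End(E)^\perp$ built from $P_H$ via the formula~\eqref{eq:metric induced} agrees, up to the constant $2r$ identified in \Cref{ex}, with the metric canonically induced by $h$ on the endomorphism bundle.

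The only genuinely delicate point is the passage to the projective quotient: one must verify that scalar multiplication on $V$ acts trivially on each associated bundle appearing in the construction. This is clear for the adjoint representation, since $\ks\kl(V)$ is already a $PGL(V)$-module, and this is precisely the reason that the construction is well-defined as a vector bundle even though $E$ itself is not canonically recovered from $P$ alone. Granted this observation, all remaining verifications---the identification of the $\kg^{-1,1}$-piece with $\End^{-1}(E)$, the translation of $\theta\wedge\theta=0$ into $[\tau,\tau]=0$, and the structure-group reduction to $K_0$---are straightforward bookkeeping.
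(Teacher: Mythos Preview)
Your proposal is correct and follows essentially the same approach as the paper: both construct $P$ as the projectivization of the graded frame bundle of $E$ (the paper phrases this as extending the structure group of the frame bundle $\tilde{P}$ along $GL(r,\bC)\to PGL(r,\bC)$, which amounts to your quotient $P'/\bC^*$), identify $P\times_K\kg^{-1,1}$ with $\oplus_p\hom(E^{p,q},E^{p-1,q+1})$ and set $\tau:=\theta$, and obtain $P_H$ from the graded unitary frames modulo $U(1)$. Your additional remark that the induced metric on $P\times_K\kg$ matches the endomorphism metric up to the factor $2r$ is not in the paper's proof of this proposition but is correct and indeed used later.
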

\begin{proof}
We shall adopt the same notions as those in \Cref{ex}.  Denote by $r_p:=\rank\, E^{p,q}$, $r:=\sum_{p+q=w}r_p$ and set $\ell_i:=\sum_{p\geq i}r_i$. We consider the following frame bundle $\tilde{P}$.  The fiber of $\tilde{P}$ over a point $x$ is the set of all ordered bases $e_1,\ldots,e_{r}$ (or say  frames) for $E_x$ such that $e_{\ell_p-r_p+1},\ldots,e_{\ell_{p}}$ is a basis for $E^{p,q}_{x}$. The structure group of $\tilde{P}$ is thus $\prod_{p}GL(r_p,\bC)$, which is the subgroup of $GL(r,\bC)$.   $\tilde{P}$ can be equipped with the holomorphic  structure induced by $E$. Consider the homomorphism $f:GL(r,\bC)\to PGL(r,\bC)=:G$, and set $K=P\big(\prod_{p}GL(r_p,\bC)\big)$ to be the image of $\prod_{p}GL(r_p,\bC)$  under  $f$.  Set $P$ to be the holomorphic $K$-fiber bundle   obtained by extending the structure group of $\prod_{p}GL(r_p,\bC)$ using $f$. 
  
  Note that $P\times_K\kg^{-1,1}=\oplus_{i+j=w}\hom(E^{i,j},E^{i-1,j+1})$. Let us  define $\tau:=\theta$. The pair $(P,\tau)$ is a principal system of log Hodge bundles on the log pair $(X,D)$.
	
Recall that the metric $h$ for the Hodge bundle $(E,\theta)|_{X-D}$  is   a direct sum   $h=\oplus_{p+q=w}h_p$. We take a sesquilinear form $Q$ of $E$ defined by  $Q(u,v):=(\sn)^{p-q}h(u,v)$  for $u,v\in E^{p,q}$.  We take $\tilde{P}_H$ to be a reduction of $\tilde{P}|_{X-D}$ consisting of unitary frames with respect to $Q$. In other words, The fiber of $\tilde{P}$ over a point $x$ is the set of frames $e_1,\ldots,e_{r}$  for $E_x$ such that $e_{\ell_p-r_p+1},\ldots,e_{\ell_p}$ is an orthonormal basis for $(E_x^{p,q},h_p)$.  Hence the structure group of $\tilde{P}_H$ is $\tilde{K}_0:=\prod_{p+q=w}U(r_p)$.  Define $K_0:=P\big(\prod_{p+q=w}U(r_p)\big)$, which is the image  $f(\tilde{K}_0)$. Set $P_H$ to be the  smooth principal $K_0$-fiber bundle  on $X-D$ obtained by extending the structure group of $\tilde{P}_H$ using $f:K\to K_0$. Then $P_H\subset P_{X-D}$ is also a metric reduction.   The Hodge group $G_0$ will be  $PU(p_0,q_0)$ where $p_0:=\sum_{p\ even}r_p$ and $q_0:=\sum_{p\ odd}r_p$, and $G:=PGL(r,\bC)$ is the complexification of $G_0$. The proposition is proved.
\end{proof}

\section{Tannakian consideration}
In this section, we shall construct \emph{principal variation of Hodge structures} over quasi-projective manifolds.  Its proof is based on \cref{prop:SM correspondence} together with some Tannakian considerations  in \cite{Sim90,Moc06,Mau15}.  
\begin{thm}\label{Tannakian}
Let $(X,D)$ be a projective log pair endowed with an ample polarization $L$. Let $(P,\tau)$ be a principal system of log Hodge bundles on $(X,D)$, and  let $\rho$ be  a Hodge representation $\rho:G\to GL(V)$ for some polarized Hodge structure $(V=\oplus_{i+j=w}V^{i,j},h_V)$ so that $\rho|_{K_0}: K_0\to GL(V)$ is faithful and $d\rho: \kg_0\to \kg\kl(V)$ is injective.   If the system of log Hodge  bundles $(E:=P\times_K V,\theta:=d\rho(\tau))$ defined in \cref{lem:Hodge} is $\mu_L$-polystable with $\int_X ch_2(E)\cdot c_1(L)^{\dim X-2}=0$, then there exists a metric  reduction $P_H$ for $P|_{X-D}$  so that the triple $(P|_{X-D},\tau|_{X-D},P_H)$ is a \emph{principal variation of Hodge structures} on $X-D$. Moreover, such $P_H$ together with the polarization $h_V$ for $V$ gives rise to a Hodge metric $h$ for $(E,\theta)|_{X-D}$ (defined in \cref{lem:Hodge})  which is adapted to  $(E,\theta)$.
\end{thm}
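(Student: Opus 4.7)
The strategy is to invoke the Simpson--Mochizuki correspondence \cref{prop:SM correspondence} on the associated system of log Hodge bundles $(E,\theta)$ and then promote the resulting adapted Hodge metric to a $K_0$-reduction of $P$ by a direct frame-bundle construction. First I would observe that $G_0$ being semi-simple forces its complexification $G$ to admit no nontrivial algebraic characters, so $\det\rho:G\to\bC^*$ is trivial and hence $c_1(E)=0$. Combined with the polystability hypothesis and $\int_X ch_2(E)\cdot c_1(L)^{n-2}=0$, \cref{prop:SM correspondence} produces a Hodge metric $h_E$ for $(E,\theta)|_{X-D}$ adapted to $(E,\theta)$.

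Next I build $P_H$ fiberwise. For $x\in X-D$, a point $\xi\in P_x$ gives a canonical isomorphism $I_\xi:V\to E_x$, $v\mapsto[\xi,v]$, which respects the Hodge decomposition. Pulling $h_E|_x$ back through $I_\xi$ produces a hermitian metric on $V$ making the summands $V^{p,q}$ orthogonal; all such metrics form a single $K$-orbit with stabilizer $K_0$ at $h_V$. Therefore
\[
P_{H,x}:=\bigl\{\xi\in P_x:I_\xi^{*}(h_E|_x)=h_V\bigr\}
\]
is a $K_0$-torsor, and as $x$ varies the smoothness of $h_E$ together with the implicit function theorem yields a smooth principal $K_0$-subbundle $P_H\subset P|_{X-D}$, whose induced Hodge metric on $E$ via \cref{lem:Hodge} tautologically coincides with $h_E$.

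Finally, I would verify that the connection $D_H:=d_H+\tau|_{X-D}+\bar\tau_H$ attached to $P_H$ is flat. Pushing forward under $d\rho$ converts $d_H$ to the Chern connection $d_{h_E}$ of $(E,h_E)$ and $\bar\tau_H$ to $\bar\theta_{h_E}$, so the image of $D_H$ in $\End(E)$ is $D_{h_E}:=d_{h_E}+\theta+\bar\theta_{h_E}$, which is flat by the harmonic bundle property of $h_E$. Since the curvature of $D_H$ lies in $\Omega^2(X-D,P_H\times_{K_0}\kg_0)$ and $d\rho:\kg_0\to\End(V)$ is injective, the curvature of $D_H$ itself must vanish, so $(P|_{X-D},\tau|_{X-D},P_H)$ is a principal variation of Hodge structures. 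The deepest input is the first step, which rests on Mochizuki's existence theorem for harmonic metrics; the subsequent frame-bundle transfer is largely formal, with only a minor subtlety in verifying smoothness of $P_H$ when $K_0$ is disconnected, an issue that does not arise for the Hermitian-type groups relevant to \cref{main}.
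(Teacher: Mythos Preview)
Your argument has a genuine gap at the construction of $P_H$. You claim that the pullback metrics $I_\xi^*(h_E|_x)$ on $V$, which are indeed Hodge metrics in the sense that the $V^{p,q}$ are orthogonal, ``form a single $K$-orbit with stabilizer $K_0$ at $h_V$''. That these pullbacks form a single $K$-orbit is tautological since $\xi$ ranges over a $K$-torsor; the crucial assertion---that $h_V$ lies in this orbit, i.e.\ that $P_{H,x}\neq\varnothing$---is exactly what needs proof, and it is false that $K$ acts transitively on all Hodge metrics on $V$. In the main application one has $G_0=PU(n,1)$, $K=P(GL(n)\times GL(1))$ acting on $\kg=\ks\kl(n+1)$ via $Ad$; here $\dim_\bR K=2n^2$ is much smaller than the real dimension of the space of Hodge metrics on $\kg^{-1,1}\oplus\kg^{0,0}\oplus\kg^{1,-1}$. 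Concretely, a Hodge metric on $\End(E)^\perp$ arising from a $K_0$-reduction of $P$ must have the metrics on $\kg^{-1,1}$ and $\kg^{1,-1}$ mutually dual, a constraint that an arbitrary adapted Hodge metric produced by \cref{prop:SM correspondence} has no \emph{a priori} reason to satisfy.

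This is precisely the role of the Tannakian argument in the paper. One realizes $K\subset GL(V)$ as the stabilizer of a line $V_1\subset T^{a,b}V$ with $K$-invariant complement $V_2$. Polystability of $(E,\theta)$ passes to $T^{a,b}(E,\theta)$ by \cref{prop:stability}, and the adapted harmonic metric on $T^{a,b}E$ then splits along $E_1\oplus E_2$ with $E_i=P\times_K V_i$. Hence the holonomy of the Chern connection $d_{h_E}$, viewed in $GL(T^{a,b}V)$, preserves $V_1$ and therefore lies in $K$. This is what guarantees that the Chern connection of $(E,h_E)$ descends to a connection on the $K$-reduction $P\subset P_{GL(V)}$, after which one may set $P_H:=P\cap P_{U(E,h_E)}$ and proceed. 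Your final flatness step via injectivity of $d\rho$ on $\kg_0$ is correct once $P_H$ exists, but the holonomy computation is the substantive step that your frame-bundle construction skips.
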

\begin{proof}
We first prove that $(E,\theta)|_{X-D}$ admits a Hodge metric $h$ over $(E,\theta)|_{X-D}$ which is adapted to   $(E,\theta)$. Since $K$ is  a complex semi-simple Lie group, the Hodge representation $\rho':K\to GL(\det V)$ induced by $\rho$ has image contained in $SL(\det  V)=1$. Hence $\rho'$ is trivial. Note that $\det E=P\times_K \det V$, which is thus a trivial line bundle on $X$. Hence $c_1(E)=0$. Since we assume that  $(E,\theta)$ is $\mu_L$-polystable with $\int_X ch_2(E)\cdot c_1(L)^{\dim X-2}=0$, it follows from \cref{prop:SM correspondence} that $(E,\theta)|_{X-D}$ admits a Hodge metric $h$ over $(E,\theta)|_{X-D}$ which is adapted to    $(E,\theta)$. 

Let us show that $\rho|_{K}:K\to GL(V)$ is  faithful. By \eqref{eq:unitary}, one has $\rho(K_0)\subset U(V,h_V)$. Since $K$ is the complexification of $K_0$ and $\rho|_{K_0}:K_0\to GL(V)$ is assumed to be faithful, one concludes that $\rho|_{K}:K\to GL(V)$ is also faithful.
	
Let us now recall some Tannakian arguments.	The representation $\rho$ induces a representation $\rho_{a,b}:G\to GL(T^{a,b}V)$ for any $a,b\in \mathbb{N}$, where $T^{a,b}V:=\hom(V^{\otimes a},V^{\otimes b})$. 
Since $\rho|_{K}: K\to GL(V)$ is faithful, we can consider $K$ as a reductive algebraic subgroup of $GL(V)$. 
There is a one dimensional complex subspace  $V_1\in T^{a,b}V$ for some $(a,b)\in \mathbb{N}^2$ so that 
\begin{align}\label{eq:tanak}
K=\{g\in GL(V)\mid \rho_{a,b}(g)(V_1)=V_1 \}.
\end{align}
Since $K$ is reductive, there is a complementary subspace $V_2$ of $T^{a,b}V$ for $V_1$ which is invariant under $K$.

By \cref{lem:Hodge}, the Hodge representation $\rho_{a,b}$ and $(P,\tau)$ gives rise to a system of log Hodge bundles $(P\times_K T^{a,b}V, \theta^{a,b}:=d\rho_{a,b}(\tau))$ over $(X,D)$, which is nothing but $T^{a,b}(E,\theta)$. Recall that $\rho_{a,b}(K)(V_1)= V_1$ and $\rho_{a,b}(K)(V_2)=V_2$. Consider the log Higgs bundles $(E_1,\theta_1):=(P\times_K V_1, d\rho_{a,b}(\tau))$ and $(E_2,\theta_2):=(P\times_K V_2, d\rho_{a,b}(\tau))$ over $(X,D)$. 

Note that $T^{a,b}(E,\theta)=(E_1,\theta_1)\oplus (E_2,\theta_2)$. By \cref{thm:mochizuki}, $T^{a,b}(E,\theta)$ is $\mu_L$-polystable with $\int_Xc_1(T^{a,b}(E))\cdot c_1(L)^{\dim X-1}=0$  with respect to an arbitrary polarization $L$. Since $c_1(T^{a,b}(E))=c_1(E_1)+c_1(E_2)$, by the polystability of $T^{a,b}(E,\theta)$, we conclude that $(E_1,\theta_1)$ and $(E_2,\theta_2)$ are both $\mu_L$-polystable. By \cref{prop:SM correspondence}, each $(E_i|_{X-D},\theta_i|_{X-D})$ admits a harmonic metric $h_i$ which is adapted  to $(E_i,\theta_i)$. Moreover, $h$ coincides with $h_1\oplus h_2$ up to some obvious ambiguity.

 In the rest of the proof,   any object which appears is restricted over $X-D$. Let us first  enlarge the structure group of $P$ by defining
$P_{GL(V)}:=P\times_{K}GL(V)$ via the faithful representation $\rho|_{K}:K\to GL(V)$. This is   the holomorphic principal (frame) bundle associated to $E$. We can consider $P=P\times_KK\subset P_{GL(V)}$  as a  reduction of $P_{GL(V)}$.  The metric $h$ for $E$ gives rise to a reduction $P_{U(E,h)}$ of $P_{GL(V)}$  
with the structure group $U(V,h_V)$.   Indeed, note that 
$$
E=P_{GL(V)}\times_{GL(V)}V
$$
and thus the metric $h$ for $E$ induces a family of hermitian metrics $h_e$ for $V$ parametrized by $e\in P_{GL(V)}$. It has the obvious relation $h_{e\cdot g}=g^*h_{e}$ for any $g\in GL(V)$. We define 
\begin{align}\label{eq:reduction}
P_{U(E,h)}:=\{e\in P_{GL(V)}\mid h_e=h_V\}
\end{align} 
and it is obvious that  if $e\in P_{U(E,h)}$, then $e\cdot g\in P_{U(E,h)}$ if and only if $g\in U(V,h_V)$.  Hence the structure group of $P_{U(E,h)}$ is $U(V,h_V)$.

 Let us define $P_H:=P\cap P_{U(E,h)}$ whose structure group is $U(V,h_V)\cap K\supset K_0$ by \eqref{eq:unitary}. Since $K_0$ is the maximal compact subgroup of $K$ and $U(V,h_V)\cap K$ is also compact, one has moreover $U(V,h_V)\cap K=K_0$. Hence $P_H\subset P$ is a metric reduction with the structure group $K_0$.  
 Obviously, if we follow \cref{lem:Hodge}  to define a new metric $h'$ for $E$    by setting
 $$
 h'((p,u),(p,v)):=h_V(u,v)
 $$
 for any $p\in P_H$ and  for any $u,v\in V$, then 
 \begin{align}\label{eq:compatible}
 h'=h
 \end{align} by \eqref{eq:reduction}.  We shall prove that $(P|_{X-D},\tau|_{X-D},P_H)$ is a principal variation of Hodge structures on $X-D$ following the elegant arguments in \cite[Proposition 3.7]{Mau15}.

Let $A\in \sC^\infty(P_{GL(V)},T^*_{P_{GL(V)}}\otimes \kg\kl(V))$ be the Chern connection 1-form for the principal bundle $P_{GL(V)}$     induced by   the Chern connection $d_h$ for $(E,h)$. 
Fix a base point $p\in P\subset P_{GL(V)}$, and we denote by  $\pi:P\to X$  the projection map.  Recall that $$T^{a,b}(E,h)=(E_1,h_1)\oplus (E_2,h_2),$$
and
$$E_i=P\times_K V_i.$$  
Hence the holonomy $Hol(p,\gamma)\in GL(V)$ with respect to the connection $A$ along any smooth loop $\gamma$ based at $\pi(p)$ satisfies that
$$
\rho_{a,b}\big(Hol(p,\gamma)\big)(V_i)\subset V_i
$$ 
for $i=1,2$. By \eqref{eq:tanak}, one has $Hol(p,\gamma)\in K$.  Hence the restriction of $A$ to $P$ is 1-form with values in $\kk$. In other words, $A$ is induced by a connection on  $P$.

On the other hand, by the definition of the Chern connection, $A$ is also induced by a connection on  $P_{U(E,h)}$; in other words, the restriction of $A$ to $P_{U(E,h)}$ is 1-form with values in ${\rm Lie}(U(V,h_V))$, where  ${\rm Lie}(U(V,h_V))$ denotes the Lie algebra of $U(V,h_V)$. Since $\kk_0=\kk\cap {\rm Lie}(U(V,h_V))$, there is a connection $A_0\in \sC^\infty(P_{H},T^*_{P_H}\otimes \kk_0)$ for the smooth principal $K_0$-fiber bundle $P_H:=P_{U(E,h)}\cap P$ which induces the connection $A$. $A_0$  is moreover the Chern connection with respect to the reduction $P_H$ of $P$ by our construction. Let us define $F_H\in \sA^{1,1}(P_H\times_{K_0}\kg_0)$ to be the curvature form of the connection $A_0+\tau+\overline{\tau}_H$ over the smooth principal $K_0$-bundle $P_H\times_{K_0} G_0$, , where $\overline{\tau}_H$ is the adjoint of $\tau$ with respect to the metric reduction $P_H\subset P$. Recall that  
 $
\theta:=d\rho(\tau).
$ 
By \eqref{eq:compatible}, one has $\overline{\theta}_h=d\rho(\overline{\tau}_H)$. Hence 
\begin{align}\label{eq:flat}
d\rho(F_H)=(d_h+\theta+\overline{\theta}_h)^2=F_h(E)=0 
\end{align}
where $d_h$ is the Chern connection for $(E,h)$.
Since $d\rho:\kg_0\to \kg\kl(V)$ is assumed to be injective, by \eqref{eq:flat} this implies that $F_H=0$. In conclusion, $(P|_{X-D},\tau|_{X-D},P_H)$ is a principal variation of Hodge structures on $X-D$. 
	\end{proof}
\section{Uniformization of quasi-projective manifolds by unit balls} 
This section is devoted to the proof of \cref{main}.  In \cref{sec:pluri}  we shall prove a basic result for the extension of plurisubharmonic functions. This lemma will be used in the proof of \cref{main}. We shall also give an application of this fact in Hodge theory: we can give a much simpler proof  of the negativity of kernel of Higgs fields for tame harmonic bundles originally proven by Brunebarbe \cite{Bru17} (see also \cite{Zuo00} for systems of log Hodge bundles).  With all the tools developed above, we are able to prove \cref{main} in \cref{sec:uniform}.
\subsection{Adaptedness  to log order and acceptable metrics}\label{sec:adapt}
We recall some notions in \cite[\S 2.2.2]{Moc07}.  
Let $X$ be a $\mathscr{C}^{\infty}$-manifold, and $E$ be a $\mathscr{C}^{\infty}$-vector bundle with a hermitian metric $h$. Let $\mathbf{v}=(v_1,\ldots,v_r)$ be a $\mathscr{C}^{\infty}$-frame of E. We obtain the $H(r)$-valued function $H(h,\mathbf{v})$,whose $(i,j)$-component is given by $h(v_i,v_j)$.  

Let us consider the case $X=\Delta^n$, and $D=\sum_{i=1}^{\ell}D_i$ with $D_i=(z_i=0)$. We have the coordinate $(z_1,\ldots,z_n)$. Let  $h$, $E$ and $\mathbf{v}$  be as above.

A frame $\mathbf{v}$ is called \emph{adapted up to log order}, if  the following inequalities hold over $X-D$
$$
C^{-1}(-\sum_{i=1}^{\ell}\log |z_i|)^{-M}\leq H(h,\mathbf{v})\leq  C(-\sum_{i=1}^{\ell}\log |z_i|)^{M}
$$   
for some positive numbers $M$ and $C$.
\begin{dfn}\label{def:log order}
	Let $(X,D)$ be a log pair, and let $E$ be a holomorphic vector bundle on $X$. A   hermitian metric $h$ for $E|_{X-D}$ is   \emph{adapted   to log order}  if for any point $x\in D$, there is an admissible coordinate $(U;z_1,\ldots,z_n)$, a holomorphic frame $\mathbf{v}$ for $E|_{U}$ which is  adapted up to log order.
\end{dfn} 

\begin{dfn}[Acceptable metric]\label{def:acceptable}
	Let $(X,D)$ be a log pair and	let $(E_,\theta)$  be a log Higgs bundle over   $(X,D)$. We say that the metric $h$ for $E|_{X-D}$ is \emph{acceptable}, if for any $p\in D$ there is an  admissible coordinate $(U;z_1,\ldots,z_n)$ around $p$, so that the norm  $\lvert F_h \rvert_{h,\omega_P}\leq C$ for some $C>0$ over $U-D$.    Such triple $(E,\theta,h)$ is called an \emph{acceptable bundle} on $(X,D)$. 
\end{dfn}

One can easily check that acceptable metrics and adaptedness to log order defined above  are invariant under bimeromorphic transformations. 
\begin{lem}\label{lem:birational}
	Let $(X,D)$ be a   log pair, and let $\mu:\tilde{X}\to X$ be a   bimeromorphic morphism so that $\mu^{-1}(D)=\tilde{D}$. For a log Higgs bundle $(E,\theta)$  over $(X,D)$, one can define a log Higgs bundle $(\tilde{E},\tilde{\theta})$ on $(\tilde{X},\tilde{D})$ by setting $\tilde{E}=\mu^*E$ and $\tilde{\theta}$ to be the composition
	$$
	\mu^*E\xrightarrow{\mu^*\theta}\mu^*(E\otimes \Omega_X^1(\log D))\to \mu^*E\otimes \Omega^1_{\tilde{X}}(\log \tilde{D}).
	$$ 
	If the metric $h$ for $(E,\theta)|_{X-D}$ is acceptable or adapt to log order, so is the metric $\mu^*h$ for $(\tilde{E},\tilde{\theta})|_{\tilde{X}-\tilde{D}}$. \qed
\end{lem}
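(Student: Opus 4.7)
My plan is to reduce the statement to a local computation in admissible coordinates. I would fix $\tilde p\in\tilde D$, set $p:=\mu(\tilde p)\in D$, and choose admissible coordinates $(U;z_1,\dots,z_n)$ at $p$ and $(\tilde U;w_1,\dots,w_n)$ at $\tilde p$ with $\mu(\tilde U)\subset U$, so that $D\cap U=\bigcup_{i=1}^\ell(z_i=0)$ and $\tilde D\cap\tilde U=\bigcup_{j=1}^{\ell'}(w_j=0)$. Because $\mu^*z_i$ is holomorphic on $\tilde U$ with zero divisor supported on $\tilde D\cap\tilde U$, I can write
\[
\mu^*z_i=w_1^{a_{i1}}\cdots w_{\ell'}^{a_{i\ell'}}\cdot u_i,\qquad i=1,\dots,\ell,
\]
with $a_{ij}\in\mathbb{Z}_{\geq 0}$ and $u_i$ a nonvanishing holomorphic function on $\tilde U$. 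The hypothesis $\mu^{-1}(D)=\tilde D$ will force that for each $j\in\{1,\dots,\ell'\}$ there is at least one $i$ with $a_{ij}\geq 1$; this is the only place the full condition $\mu^{-1}(D)=\tilde D$ is used.

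The main technical tool I would extract from this factorization is the two-sided comparison
\[
\sum_{j=1}^{\ell'}\bigl(-\log|w_j|\bigr)-C_0\leq -\sum_{i=1}^{\ell}\log|\mu^*z_i|\leq A\sum_{j=1}^{\ell'}\bigl(-\log|w_j|\bigr)+C_0
\]
on $\tilde U-\tilde D$, with $A:=\max_j\sum_i a_{ij}$, which follows at once from $-\log|\mu^*z_i|=-\sum_j a_{ij}\log|w_j|-\log|u_i|$, the boundedness of $\log|u_i|$, and the surjectivity just mentioned. Adaptedness to log order is then immediate: if $\mathbf v=(v_1,\dots,v_r)$ is a holomorphic frame of $E$ on $U$ adapted up to log order with exponent $M$ and constant $C$, then $\mu^*\mathbf v$ is a holomorphic frame of $\mu^*E$ on $\tilde U$, $H(\mu^*h,\mu^*\mathbf v)=H(h,\mathbf v)\circ\mu$, and the two displayed inequalities convert the bounds into bounds of the same shape on $\tilde U-\tilde D$, at worst with the exponent replaced by $AM$.

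For acceptability the equality $F_{\mu^*h}=\mu^*F_h$ reduces everything to the Poincar\'e-metric domination $\mu^*\omega_P^U\leq C_1\,\omega_P^{\tilde U}$ on $\tilde U-\tilde D$. The computation I would run is as follows: differentiating the factorization yields $\mu^*(dz_i/z_i)=\sum_j a_{ij}\,dw_j/w_j+du_i/u_i$ with $du_i/u_i$ a bounded smooth form on $\tilde U$, and the lower estimate above gives $|\log|\mu^*z_i||\geq a_{ij}|\log|w_j||-C_0$ whenever $a_{ij}>0$. A term-by-term inspection should then show that the summands $a_{ij}^2|dw_j/w_j|^2$ appearing in $|\mu^*(dz_i/z_i)|^2$ are absorbed by the denominators $|\log|\mu^*z_i||^2$, with the factors $a_{ij}$ cancelling at leading order. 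This gives the domination, whence $|\mu^*F_h|_{\mu^*h,\omega_P^{\tilde U}}\leq C_2\,|F_h|_{h,\omega_P^U}\!\circ\!\mu\leq C_3$ since the fibrewise norm of a $(1,1)$-form decreases when the K\"ahler form is enlarged.

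The one step that will require genuine care, I expect, is this Poincar\'e comparison, and specifically the bookkeeping that ensures the hypothesis $\mu^{-1}(D)=\tilde D$ really is enough to control every direction $-\log|w_j|$; without it one could have $\mu^*\omega_P^U$ degenerate transverse to a component of $\tilde D$ and spoil acceptability. Everything else is routine pull-back manipulation.
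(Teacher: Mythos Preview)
Your approach is correct, and there is nothing in the paper to compare it to: the lemma is stated with a \qed\ immediately after the display, prefaced only by the remark that ``one can easily check'' these two properties are preserved under bimeromorphic transformations. Your local computation in admissible coordinates is precisely the sort of verification the paper leaves implicit.

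One clarification on your closing paragraph. The Poincar\'e domination $\mu^*\omega_P^U\leq C_1\,\omega_P^{\tilde U}$ does \emph{not} require the inclusion $\tilde D\subset\mu^{-1}(D)$; it only needs $\mu(\tilde U^*)\subset U^*$, i.e.\ $\mu^{-1}(D)\subset\tilde D$. Indeed, in your own diagonal estimate you use $|\log|\mu^*z_i||\geq a_{ij}|\log|w_j||-C_0'$ only when $a_{ij}>0$, and if some column $j$ of $(a_{ij})$ vanishes identically then the pulled-back Poincar\'e terms contribute nothing singular in the $w_j$-direction, which only helps the upper bound. The place where the column-nonvanishing condition (equivalently $\tilde D\subset\mu^{-1}(D)$) genuinely enters is the \emph{lower} half of your two-sided log comparison $\sum_j(-\log|w_j|)-C_0\leq -\sum_i\log|\mu^*z_i|$, used for adaptedness to log order; and even there the failure along an ``extra'' component of $\tilde D$ would be harmless, since $H(\mu^*h,\mu^*\mathbf v)$ stays bounded near such a component while the target bounds $(-\sum_j\log|w_j|)^{\pm M'}$ become vacuous. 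So your worry about ``degeneracy transverse to $\tilde D$'' is somewhat misplaced: the hypothesis $\mu^{-1}(D)=\tilde D$ is mainly there to ensure $\mu^*h$ is defined on all of $\tilde X-\tilde D$ and that $(\tilde X,\tilde D)$ is a log pair, not to rescue either estimate.

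If you want to avoid the term-by-term bookkeeping for the Poincar\'e comparison, note that each summand of $\omega_P^U$ is the pull-back of the curvature~$-4$ metric on $\Delta^*$ or $\Delta$ via a coordinate projection; applying the Ahlfors--Schwarz lemma to the holomorphic maps $\pi_i\circ\mu:\tilde U^*\to\Delta^*$ (or $\Delta$) against the complete metric $\omega_P^{\tilde U}$, whose holomorphic sectional curvature is bounded above by a negative constant, gives $(\pi_i\circ\mu)^*\omega_{P,1}\leq C\,\omega_P^{\tilde U}$ for each $i$ and hence the bound after summing.
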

 
\subsection{Extension of psh functions and negativity of kernel of Higgs fields} \label{sec:pluri}
In this subsection we shall prove a result on the extension of plurisubharmonic (psh for short) functions, which will be used in the proof of  \cref{main,prop:analysis}. As a byproduct, we give a very simple proof of the negativity of kernels of Higgs fields of tame harmonic bundles by Brunebarbe \cite[Theorem 1.3]{Bru17}, which generalizes the earlier work by Zuo \cite{Zuo00} for system of log Hodge bundles.
\begin{lem}\label{lem:extension2} 
	Let   $X=\Delta^n$, and $D=\sum_{i=1}^{\ell}D_i$ with $D_i=(z_i=0)$. Let $\varphi$ be a psh function  on $X^*$. We assume that for any $\delta>0$, there is a positive constant $C_\delta$ so that 
	$$
	\varphi(z)\leq \delta\sum_{j=1}^{\ell}(-\log |z_j|^2))+C_\delta
	$$
	on $X^*$. Then $\varphi$ extends \emph{uniquely} to a  psh function on $X$.
\end{lem}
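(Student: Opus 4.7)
The plan is to extend $\varphi$ via upper semicontinuous (usc) regularization, after first twisting it down by a psh function that is harmless on $X^*$ but blows up to $-\infty$ along $D$.

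Introduce $\psi(z):=\sum_{j=1}^{\ell}\log|z_j|^2$, which is psh on $X=\Delta^n$, satisfies $\psi\leq 0$, and equals $-\infty$ along $D$. For every $\delta>0$ the shifted function $\Phi_\delta:=\varphi+\delta\psi$ is psh on $X^*$, and the hypothesis is precisely the bound $\Phi_\delta\leq C_\delta$ on $X^*$. The classical Grauert--Remmert extension theorem for locally bounded-above psh functions across an analytic subset then produces a psh extension $\tilde\Phi_\delta$ of $\Phi_\delta$ to all of $X$, defined on $D$ by usc regularization.

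I would next establish two structural properties of the family $\{\tilde\Phi_\delta\}_{\delta>0}$. First, monotonicity in $\delta$: since $\psi\leq 0$ one has $\Phi_{\delta_1}\leq\Phi_{\delta_2}$ on $X^*$ for $\delta_1\geq \delta_2$, and this inequality transfers to the extensions on $X$ by the standard comparison lemma for psh functions (two psh functions comparable on a dense open set are comparable everywhere, via $v(z)=\lim_{r\to 0}\operatorname{avg}_{B(z,r)}v$). Second, for any fixed $\delta>0$, picking any smaller $\delta'<\delta$ in the growth hypothesis gives
\[
\Phi_\delta(w)\;=\;\varphi(w)+\delta\psi(w)\;\leq\;(\delta-\delta')\psi(w)+C_{\delta'}\;\longrightarrow\;-\infty
\]
as $w\to D$, because $\psi\to-\infty$ and $\delta-\delta'>0$. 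Consequently $\tilde\Phi_\delta\equiv -\infty$ along $D$.

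With these facts in place, set $v:=\sup_{\delta>0}\tilde\Phi_\delta=\lim_{\delta\to 0^+}\tilde\Phi_\delta$. On $X^*$ one has $v=\sup_\delta(\varphi+\delta\psi)=\varphi$, while $v\equiv-\infty$ on $D$. Since $v$ is an increasing limit of psh functions and is not identically $+\infty$, the sub-mean-value inequality passes to $v$ by monotone convergence, so its usc regularization $\tilde\varphi:=v^{*}$ is a genuine psh function on $X$, agreeing with $v$ almost everywhere. Because $v=\varphi$ is already usc on $X^*$, we find $\tilde\varphi|_{X^*}=\varphi$, so $\tilde\varphi$ is the desired psh extension. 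Uniqueness is immediate: any two psh extensions agree on the dense open set $X^*$, hence everywhere by the averaging identity for psh functions.

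The main obstacle is that the constants $C_\delta$ may blow up as $\delta\to 0$, so no uniform upper bound is available; the monotonicity of the family $\{\tilde\Phi_\delta\}$ is precisely what sidesteps this difficulty, letting one take the limit $\delta\to 0^+$ inside the class of psh functions without ever needing uniform estimates.
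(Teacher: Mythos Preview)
Your proof is correct and follows the same scheme as the paper's: both introduce $\Phi_\delta=\varphi+\delta\sum_{j}\log|z_j|^2$, extend each $\Phi_\delta$ across $D$, take the supremum over $\delta>0$, and pass to the usc regularization. The only substantive difference lies in how you justify that $(\sup_\delta\tilde\Phi_\delta)^*$ is plurisubharmonic. You invoke the Brelot-type dichotomy for increasing limits of psh functions (not $\equiv+\infty$ implies the usc regularization is psh), which is a valid theorem, though your one-line justification via monotone convergence is incomplete on its own---it does not by itself explain why $v^*<+\infty$ at points of $D$. The paper instead observes, via the maximum principle for $\tilde\Phi_\delta$ on the closed polydisc $\overline{\Delta(0,r)}^{\,n}$, that $\sup_{\Delta(0,r)^n}\tilde\Phi_\delta\le\sup_{S(0,r)^n}\Phi_\delta\le\sup_{S(0,r)^n}\varphi$, which is finite and \emph{independent of $\delta$}; so, contrary to your closing remark, a uniform local upper bound \emph{is} available, and this makes the passage to $v^*$ entirely elementary without appealing to the dichotomy.
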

\begin{proof}
	Define $\varphi_\ep:=\varphi+\ep \sum_{j=1}^{\ell}(\log |z_j|^2)$ for any $\ep>0$. Then for each $\ep>0$,  $\varphi_\ep$ is locally bounded from above, which thus extends to a  psh $\tilde{\varphi}_\ep$ on the whole $X$ by the well-known fact in pluripotential theory.  By the maximum principle, for any $0<r<1$,  there is a point $\xi_\ep\in  S(0,r)\times\cdots S(0,r)$ so that 
	$$
	\sup_{z\in \Delta(0,r)\times \cdots\times \Delta(0,r)}\varphi_\ep(z)\leq \varphi_\ep(\xi_\ep)\leq \varphi(\xi_\ep)
	$$
	where $S(0,r):=\{z\in \Delta\mid |z|=r\}$.  Note that the compact set  $S(0,r)\times\cdots S(0,r)$ is contained in $X-D$. Since $\varphi$ is psh on $X-D$, there exists $z_0\in S(0,r)\times\cdots S(0,r)$ so that
	$$
	\sup_{z\in S(0,r)\times\cdots S(0,r)}\varphi(z)\leq \varphi(z_0)<+\infty. 
	$$
	Hence $\varphi_\ep$ is \emph{uniformly} locally bounded from above.

	We define  the \emph{upper envelope }
 $
	\tilde{\varphi}:=	\sup_{\ep>0} \tilde{\varphi}_\ep$,  
	and define the  \emph{upper semicontinuous regularization} of $\tilde{\varphi}$ by 
 $
	\tilde{\varphi}^\star(x):=\lim_{\delta\to 0^+}\sup_{\mathbb{B}(x,\delta)}	\tilde{\varphi}(z)$,  
	where $\mathbb{B}(x,\delta)$ is the unit ball of radius $\delta$
	centered at $x$. Then by the well-known result in pluripotential theory \cite[Chapter 1, Theorem 5.7]{Dembook}, $	\tilde{\varphi}^\star$	 is a psh function on $X$. By our construction, $	\tilde{\varphi}^\star(z)=\varphi(z)$ on $X-D$. This proves our result.
\end{proof}
A direct consequence of the above lemma is the following extension theorem of positive currents.
\begin{lem}\label{lem:current}
	Let $(X,D)$ be a log pair and let  $L$ be a line bundle on $X$. Assume that $h$ is a smooth hermitian metric for $L|_{X-D}$, which is adapted to log order. Assume further that the curvature form $\sn R_{h}(L|_{X-D})\geq 0$. Then $h$ extends to a singular hermitian metric $\tilde{h}$ for $L$ with zero Lelong numbers so that the   curvature current $\sn R_{\tilde{h}}(L)$ is closed and positive.  In particular, $L$ is a nef line bundle.\qed
\end{lem}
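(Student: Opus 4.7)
The plan is to reduce the question to a local statement on admissible coordinates and invoke \cref{lem:extension2}. Fix a point $x\in D$, an admissible coordinate $(U;z_1,\ldots,z_n)$ at $x$, and a local holomorphic frame $e$ of $L|_U$. Since adaptedness to log order is a frame-wise condition up to a bounded positive factor, writing $\varphi:=-\log h(e,e)$, smoothness of $h$ on $U^*$ gives a smooth $\varphi$ on $U^*$ satisfying
\[
|\varphi(z)|\,\le\, M\log\!\Bigl(-\sum_{j=1}^{\ell}\log|z_j|\Bigr)+C\qquad \text{on }U^*
\]
for some $M,C>0$. The curvature hypothesis $\sn R_h(L|_{X-D})\ge 0$ translates into $\sn\partial\bar\partial\varphi\ge 0$, i.e.\ $\varphi$ is plurisubharmonic on $U^*$. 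Because $\log(-\log|z_j|)=o(-\log|z_j|^2)$, for every $\delta>0$ we have $\varphi\le \delta\sum_{j=1}^{\ell}(-\log|z_j|^2)+C_\delta$, which is precisely the growth hypothesis of \cref{lem:extension2}. Applying that lemma gives a psh extension $\tilde\varphi$ of $\varphi$ to $U$, and the singular hermitian metric $\tilde h|_U:=e^{-\tilde\varphi}$ on $L|_U$ satisfies $\sn R_{\tilde h}(L|_U)=\sn\partial\bar\partial\tilde\varphi\ge 0$ as a closed positive current.

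Next I would check that the construction is independent of the choice of frame, hence globalizes. For two frames $e,e'$ on overlapping charts, the transition $g=e/e'$ is holomorphic and nowhere vanishing on the whole overlap (since $L$ is a line bundle on $X$), so the difference $-\log|g|^2$ of local weights is pluriharmonic and in particular locally bounded across $D$; the uniqueness in \cref{lem:extension2} then forces the psh extensions to patch into a global singular hermitian metric $\tilde h$ on $L$ with $\sn R_{\tilde h}(L)\ge 0$.

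For the Lelong number vanishing, since $\tilde\varphi$ is locally bounded above and $\nu(\tilde\varphi,q)\ge 0$ at any $q\in D$, it suffices to produce a single sequence $z_k\to q$ with $z_k\in X-D$ along which $\tilde\varphi(z_k)/\log|z_k-q|\to 0$. Choosing a radial sequence in which each coordinate $z_j$ of the components of $D$ through $q$ equals $r_k\to 0^+$, one has $-\sum_{j}\log|z_j^{(k)}|=O(\log(1/r_k))$ and $\log|z_k-q|\sim\log r_k$, so the bound $|\varphi(z_k)|=O(\log\log(1/r_k))$ yields the desired ratio $\to 0$. Thus all Lelong numbers vanish, and the nefness of $L$ follows from Demailly's regularization theorem: a line bundle carrying a singular hermitian metric whose curvature current is positive with identically vanishing Lelong numbers admits smooth approximate metrics with curvatures bounded below by $-\varepsilon\omega$ for every $\varepsilon>0$ and any fixed K\"ahler form $\omega$, hence is nef. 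The only mildly delicate step is the quantitative $O(\log\log)$ control on $\varphi$ extracted from adaptedness to log order, which simultaneously feeds \cref{lem:extension2} and the Lelong number computation.
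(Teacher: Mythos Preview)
Your proposal is correct and follows exactly the route the paper intends: the lemma is marked with \qed\ immediately after \cref{lem:extension2}, indicating it is meant as a direct corollary, and you have simply written out that corollary in full. The local reduction to \cref{lem:extension2} via the adapted-to-log-order bound, the patching argument using that transition functions of $L$ are holomorphic across $D$, and the conclusion of nefness via zero Lelong numbers are all correct. One minor simplification: for the Lelong number vanishing you do not need to pick a special radial sequence; the two-sided adaptedness bound already gives the lower bound $\tilde\varphi\ge -M\log(-\sum_{j=1}^{\ell}\log|z_j|)-C$ on $U^*$, and since $\log(-\log|z_j|)=o(|\log|z-q||)$ near any $q\in D$, this forces $\nu(\tilde\varphi,q)=0$ directly (this is how the paper argues the analogous point in the proof of \cref{lem:Gue}).
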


Let us show how to apply \cref{lem:extension2} to reprove the negativity of kernels of Higgs fields of tame harmonic bundles.
\begin{thm}[Brunebarbe]\label{thm:negativity}
	Let $X$ be a compact K\"ahler manifold and let $D$ be a simple normal crossing divisor on $X$. Let $(E,\theta,h)$ be  a tame harmonic bundle on $X-D$, and let $(\diae,\theta)$ be the prolongation defined in \cite[\S 4.1]{Moc02}. Let $\cF$ be any coherent torsion free subsheaf of $\diae$ which lies in the kernel   of the Higgs field $\theta:\diae\to \diae\otimes \Omega_X^1(\log D)$, namely $\theta(\cF)=0$. Then  
	\begin{thmlist}
		\item the singular hermitian metric $h|_{\cF}$ for $\cF$, is \emph{semi-negatively curved} in the sense of \cite[Definition 2.4.1]{PT18}. 
		\item The dual $\cF^*$ of $\cF$ is \emph{weakly positive over $X^\circ-D$} in the sense of Viehweg, where $X^\circ\subset X$ is the Zariski open set so that $\cF|_{X^\circ}\to \diae|_{X^\circ}$ is a subbundle. 
		\item \label{FF} If the harmonic metric $h$ is adapted to log order and $\cF$ is a subbundle of $\diae$ so that $\theta(\cF)=0$, then the line bundle $\cO_{\bP(\cF^*)}(1)$ admits a singular hermitian metric $g$ with zero Lelong numbers so that the curvature current $\sn R_g(\cO_{\bP(\cF^*)}(1))\geq 0$; in particular, $\cF^*$ is a nef vector bundle.
		\end{thmlist}
\end{thm}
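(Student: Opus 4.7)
All three items share a single engine: for a tame harmonic Higgs bundle $(E,\theta,h)$ on $X-D$ and any local holomorphic section $s$ of $E$ satisfying $\theta(s)=0$, the function $\log|s|^2_h$ is plurisubharmonic. The plan is to establish this first, then push it across $D$ via \cref{lem:extension2}, and finally repackage the resulting singular metric in three different ways.

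The psh step uses the Hitchin--Simpson identity $F_h=-[\theta,\theta^*_h]$. In a local frame with $\theta=\sum T_\alpha\,dz_\alpha$ and $T_\alpha s=0$ one computes
\begin{align*}
\sn\langle[\theta,\theta^*_h]s,s\rangle_h
=\sn\sum_{\alpha,\beta}\langle T_\alpha^* s,T_\beta^* s\rangle_h\,dz_\alpha\wedge d\bar z_\beta\ \geq\ 0,
\end{align*}
so $\sn\langle F_h s,s\rangle_h\leq 0$. The standard Chern-curvature bound $\sn\partial\bar\partial\log|s|^2_h\geq -\sn\langle F_h s,s\rangle_h/|s|^2_h$ valid for holomorphic sections then yields the claimed plurisubharmonicity on the smooth locus.

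For (i) and (ii), let $X^\circ\subset X$ be the open locus on which $\cF$ is a subbundle of $\diae$. On $X^\circ-D$ every local holomorphic section $s$ of $\cF$ lies in $\ker\theta$, so $\log|s|^2_h$ is psh. Since $s$ already extends across $D$ as a section of $\diae$, tameness and adaptedness of $h$ to the trivial parabolic structure yield $|s|^2_h\leq C_\ep\prod|z_i|^{-\ep}$ for every $\ep>0$ near any boundary point, i.e.\ $\log|s|^2_h\leq \ep\sum_i(-\log|z_i|^2)+C_\ep$. \cref{lem:extension2} provides a unique psh extension across $D\cap X^\circ$, establishing (i) in the sense of \cite[Definition 2.4.1]{PT18} (singular hermitian metrics on a torsion-free sheaf are defined on the locally free locus). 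Dualizing, $h^*$ becomes a smooth Griffiths semi-positively curved metric on $\cF^*|_{X^\circ-D}$, and the standard dictionary between smooth semi-positively curved metrics and Viehweg's weak positivity gives (ii).

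For (iii), with $\cF$ a global subbundle and $h$ adapted to log order, set $\pi\colon Y:=\bP(\cF^*)\to X$ and look at the tautological quotient $\pi^*\cF^*\twoheadrightarrow\cO_Y(1)$. On $\pi^{-1}(X-D)$ the dual metric $h^*$ induces a smooth quotient metric $g_0$ on $\cO_Y(1)$ with $\sn R_{g_0}\geq 0$, since curvature of quotients dominates that of the source in the Griffiths sense. The two-sided pinching $C^{-1}(-\sum\log|z_i|)^{-M}\leq H(h,v)\leq C(-\sum\log|z_i|)^{M}$ for a log-adapted frame $v_1,\dots,v_r$ of $\cF$ is inherited by $H(h^*,v^*)$, and then by $g_0$ in any local trivialization of $\cO_Y(1)$ coming from the $v_i$, so $g_0$ is adapted to log order on the simple normal crossing log pair $(Y,\pi^{-1}(D))$ (which is snc because $\pi$ is smooth). \cref{lem:current} applies to the line bundle $\cO_Y(1)$ and extends $g_0$ to a singular hermitian metric $g$ with zero Lelong numbers and closed positive curvature current; hence $\cO_Y(1)$ is nef, which is by definition the nefness of $\cF^*$. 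The principal technical obstacle is precisely here: transferring the frame-level log-order estimate on $\cF$ cleanly through the projective bundle construction so that \cref{lem:current} can be invoked on $(Y,\pi^{-1}(D))$ verbatim.
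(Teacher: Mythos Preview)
Your proposal is correct and follows essentially the same route as the paper: the core is the computation that $\log|s|^2_h$ is psh on $X-D$ for $\theta(s)=0$, extended across $D$ via \cref{lem:extension2}, then fed into \cref{lem:current} on $\bP(\cF^*)$ for (iii). One point of imprecision: in (ii) you write that the \emph{smooth} semi-positive metric on $\cF^*|_{X^\circ-D}$ gives weak positivity by a ``standard dictionary'', but a smooth metric on an open subset alone is not enough---what is actually used is the \emph{singular} semi-negatively curved metric on $\cF$ over all of $X$ that you established in (i), together with the $L^2$ extension argument of \cite[Proof of Theorem 2.5.2]{PT18} across the codimension-$\geq 2$ locus $X\setminus X^\circ$; the paper makes this explicit by first passing through \cref{lem:current} on $\cO_{\bP(\cF^*|_{X^\circ})}(1)$.
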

\begin{proof}
	By \cite[Definition 2.4.1]{PT18}, it suffices to prove that for any open set $U$ and any $s\in \cF(U)$,   $\log|s|_h^2$ extends to a psh function on $U$. Pick any point $x\in D$. By the definition of $\diae$, for any $\delta>0$,  there are an admissible coordinate $(U;z_1,\ldots,z_n)$ centered at $x$, and a positive constant $C_\delta$ so that 
	$$
	\log |s|_h^2\leq \delta\sum_{j=1}^{\ell}(-\log |z_j|^2))+C_\delta
	$$
	on $U-D$.  
	Recall that $R_h(E)+[\theta,\overline{\theta}_h]=F_h(E)=0$. Since $\theta(s)=0$,  we have
	\begin{align*}
	\hess \log |s|^2_{h}&\geq-\frac{\sn\{\theta s,\theta s\}}{|s|^2_{h}}-\frac{\sn\{\overline{\theta}_{h}s,\overline{\theta}_{h}s\}}{|s|^2_{h}}\\
	&=-\frac{\sn\{\overline{\theta}_{h}s,\overline{\theta}_{h}s\}}{|s|^2_{h}}\geq 0.
	\end{align*}
	over $X-D$. Hence $	\log |s|_h^2$ is a psh function on $X-D$. 
	By \cref{lem:extension2}, we conclude that $\log |s|_h^2$ extends to a psh function on $U$. This proves that $(\cF,h)$ is negatively curved in the sense of  P\u{a}un-Takayama. 
	
	The metric $h$ induces a negatively curved singular hermitian metric $h_1$ (in the sense of \cite[Definition 2.2.1]{PT18}) on the subbundle $\cF|_{X^\circ}$.  By \cref{lem:current}, $h_1$ induces a singular metric $g$ for the line bundle $\cO_{\bP(\cF^*|_{X^\circ})}(1)$  so that $\sn R_{g}(\cO_{\bP(\cF^*|_{X^\circ})}(1))\geq 0$.  Note that $X-X^\circ$ is a codimension at least two subvariety. The second statement then follows from    H\"ormander's $L^2$-techniques in \cite[Proof of Theorem 2.5.2]{PT18}.  
	
	Let us prove the last statement. Since $\cF$ is a subbundle of $\diae$, one has  $X^\circ=X$. Since $h$ is assumed to be adapted to log order, the singular hermitian metric $g$ for $\cO_{\bP(\cF^*)}(1)$ thus has zero Lelong numbers everywhere. This implies  the nefness of the vector bundle $\cF^*$.
\end{proof}
 
\subsection{Characterization of non-compact ball quotient}\label{sec:uniform}
Let us state and prove our first main theorem in this paper.
\begin{thm}\label{thm:equality}
	Let $X$ be an $n$-dimensional complex projective manifold and let $D$ be a simple normal crossing divisor   on $X$. Let $L$ be an ample polarization on $X$. For the log Hodge bundle
	$(\Omega_X^1(\log D)\oplus \cO_X,\theta)$ on $(X,D)$ with $\theta$  defined in \eqref{eq:Higgs}, 
	we assume that it is $\mu_L$-polystable. Then one has the following inequality
	\begin{align}\label{eq:BM2}
	\big(2c_2(\Omega_X^1(\log D))-\frac{n}{n+1}c_1(\Omega_X^1(\log D))^2\big)\cdot c_1(L)^{n-2}\geq 0. 
	\end{align}
When the above equality holds, 
	\begin{thmlist}
		\item \label{item1} if $D$ is smooth, then  ${X-D}\simeq \faktor{\bB^n}{\Gamma}$ for some  torsion  free lattice  $\Gamma\subset PU(n,1)$ acting on $\bB^n$. Moreover, $X$ is the (unique)   toroidal compactification of $\faktor{\bB^n}{\Gamma}$, and each connected component of $D$ is the \emph{smooth}  quotient of an Abelian variety $A$  by a finite group acting  freely on $A$.
	\item \label{item2}    If   $D$ is not smooth, then the universal cover $\widetilde{X-D}$ of \space $X-D$ is \emph{not} biholomorphic to $\bB^n$, though there exists a holomorphic map $\widetilde{X-D}\to \bB^n$ which is locally biholomorphic.
\end{thmlist} 
In both cases, $K_X+D$ is big, nef and ample over $X-D$.
\end{thm}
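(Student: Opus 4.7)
My plan is to execute the four-step strategy outlined in \S\ref{sec:main}.

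\textbf{Inequality.} Setting $E := \Omega_X^1(\log D) \oplus \cO_X$ and $r = n+1 = \rank E$, the discriminant
\[
\Delta(E) := 2r\, c_2(E) - (r-1)\, c_1(E)^2 = (n+1)\bigl(2c_2(\Omega_X^1(\log D)) - \tfrac{n}{n+1} c_1(\Omega_X^1(\log D))^2\bigr)
\]
satisfies $\Delta(E) \cdot c_1(L)^{n-2} \ge 0$ by the parabolic Bogomolov--Gieseker inequality of Mochizuki \cite[Theorem 6.5]{Moc06} applied to the $\mu_L$-polystable log Higgs bundle $(E,\theta)$, which is precisely \eqref{eq:BM2}.

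\textbf{From equality to a principal variation of Hodge structures.} Assume now equality in \eqref{eq:BM2}. I view $(E,\theta)$ as a system of log Hodge bundles with $E^{1,0} = \Omega_X^1(\log D)$ and $E^{0,1} = \cO_X$, and use \cref{prop:converse} to associate a principal system of log Hodge bundles $(P,\tau)$ with structure group $K \subset G = PGL(n+1,\bC)$, whose non-compact real form is the Hermitian Hodge group $G_0 = PU(n,1)$. A direct inspection of $\theta$ shows that $\tau : T_X(-\log D) \to P \times_K \kg^{-1,1}$ is the identity under the natural identification $\kg^{-1,1} \cong \hom(E^{1,0}, E^{0,1})$, hence an isomorphism; so $(P,\tau)$ is a uniformizing bundle in the sense of \cref{def:uniformizing}. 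Next I apply \cref{Tannakian} with the adjoint Hodge representation $Ad : G \to GL(\kg)$ of \cref{ex:adjoint}: the bundle $(P \times_K \kg, d(Ad)(\tau))$ identifies with $(\End(E)^\perp, \theta_{\End(E)^\perp})$, which is $\mu_L$-polystable by \cref{prop:stability}, and a standard Chern-class computation yields $\int_X ch_2(\End(E)^\perp) \cdot c_1(L)^{n-2} = -\tfrac{1}{n+1} \Delta(E) \cdot c_1(L)^{n-2}$, which vanishes exactly under the equality hypothesis. The remaining technical hypotheses of \cref{Tannakian} hold because $G$ is centerless and $\kg_0$ is semisimple, so the theorem produces a metric reduction $P_H \subset P|_{X-D}$ making $(P|_{X-D}, \tau|_{X-D}, P_H)$ a principal variation of Hodge structures, whose associated period map $f : \widetilde{X - D} \to \bB^n$ is locally biholomorphic by \cref{rem:complete}.

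\textbf{Completeness (the hard step).} The main obstacle is to promote $f$ to a global biholomorphism. By \cref{rem:complete}, it suffices to show that $\tau^* h_H$ is a \emph{complete} K\"ahler metric on $X - D$. The plan is to construct, in each admissible coordinate chart $(U ; z_1, \dots, z_n)$ around a point of $D$, an explicit Hodge \emph{model metric} on $(E, \theta)|_U$ obtained by pulling the Poincar\'e metric on $(\Delta^*)^\ell \times \Delta^{n-\ell}$ back via the isomorphism $\tau$; this model is itself a harmonic metric with log growth. I then show that the model and $h_H$, viewed on the endomorphism bundle $\End(E) = E \otimes E^*$, are mutually bounded above and below. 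The upper bound is an immediate consequence of the adaptedness of $h_H$ to $(E,\theta)$; the more delicate lower bound is obtained by plurisubharmonic comparison arguments in the spirit of \cite[\S 4]{Sim90}, in which the extension result \cref{lem:extension2} is used to extend the logarithm of the ratio of the two metrics across $D$. Once mutual boundedness is established, the Poincar\'e-type growth of the model metric propagates to $\tau^* h_H$ and yields completeness.

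\textbf{Smoothness of $D$ and positivity of $K_X+D$.} With completeness in hand, $f$ is a Riemannian covering of $\bB^n$ and hence a biholomorphism. When $D$ is smooth, $\Gamma := \pi_1(X - D) \subset PU(n,1)$ is a torsion-free lattice and $X - D \cong \bB^n / \Gamma$; the precise local behaviour of $h_H$ from Step 3 matches that of the Bergman metric around each cusp, which forces the local monodromy around each component of $D$ to lie in a unipotent abelian parabolic subgroup of $PU(n,1)$, whence each component is a smooth finite quotient of an abelian variety and $X$ is the unique smooth toroidal compactification. When $D$ is not smooth, the same local analysis at a crossing would require the parabolic subgroup fixing a cusp of $\bB^n$ to have non-abelian unipotent radical, which contradicts the abelian structure of unipotent parabolics in $PU(n,1)$; hence $\widetilde{X-D}$ cannot be $\bB^n$, even though the locally biholomorphic period map constructed in Step 2 still exists. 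Finally, the isomorphism $\tau$ identifies $-(K_X + D)$ with $c_1\bigl(\det(P \times_K \kg^{-1,1})\bigr)$; the induced metric on this line bundle is negatively curved on $X - D$ with at most log growth along $D$, so $K_X + D$ is nef and big on $X$ and ample on $X - D$.
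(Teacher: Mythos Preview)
Your Steps~1 and~2 (the inequality and the construction of the uniformizing principal variation of Hodge structures via \cref{prop:converse}, \cref{prop:stability} and \cref{Tannakian}) are correct and match the paper exactly.

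Step~3, however, has a genuine gap. The model metric you propose---the one induced by pulling back the Poincar\'e metric on $(\Delta^*)^\ell\times\Delta^{n-\ell}$---is the wrong ansatz, and the claim that it is harmonic is false. The correct model $\tilde h$ on $E|_{U^*}$ (for $D\cap U=(z_1=0)$, which is where the smoothness of $D$ enters) is the one with $|e_1|^2_{\tilde h}=-\log|z_1|^2$, $|e_i|^2_{\tilde h}=1$ for $i\geq 2$, and $|e_0|^2_{\tilde h}=(-\log|z_1|^2)^{-1}$. This yields
\[
\tau^*\tilde h=\frac{\sqrt{-1}\,dz_1\wedge d\bar z_1}{|z_1|^2(\log|z_1|^2)^2}+\sum_{k\geq 2}\frac{\sqrt{-1}\,dz_k\wedge d\bar z_k}{-\log|z_1|^2},
\]
which is \emph{strictly smaller} than the Poincar\'e metric in the transverse directions---this is precisely the Bergman asymptotic near a toroidal cusp. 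The Hodge metric $h_H$ turns out to be mutually bounded with \emph{this} $\tilde h$, not with your Poincar\'e model; had $h_H$ been comparable to Poincar\'e, the resulting Bergman metric on $X-D$ would have the wrong growth. The model $\tilde h$ is not harmonic: one computes $F_{\tilde h}(E)$ explicitly and finds it nonzero but with the decay $|F_{\tilde h}(\cE)|\lesssim(-\log|z_1|^2)^{-3/2}|z_1|^{-1}$ on $\cE:=\End(E)$. The comparison then proceeds by viewing the identity as a section $s$ of $\cF:=\End(\cE,\cE)$ with $\Phi(s)=0$, bounding $\Delta\log|s|^2_{\tilde h\otimes h^*}$ by the curvature term above, and solving a Dirichlet problem on each transverse disk with the $L^2$ elliptic estimate and \cref{lem:extension2}; adaptedness alone does not give the upper bound you claim.

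Your Step~4 argument for \cref{item2} is also incorrect. The unipotent radical of a maximal parabolic in $PU(n,1)$ is a \emph{Heisenberg} group (a central extension of $\bC^{n-1}$ by $\bR$), which is non-abelian for $n\geq 2$; so the ``abelian parabolic'' contradiction you invoke does not exist. The paper instead argues as follows: if the period map were a biholomorphism, then $\tau^*h_H$ would be the Bergman metric and, being adapted to log order on $T_X(-\log D)$, the metric rigidity result \cref{thm:rigidity} forces $X$ to be the toroidal compactification---but then $D$ must be smooth, a contradiction. The same rigidity theorem is what identifies $X$ with the toroidal compactification in \cref{item1}; your local-monodromy sketch does not suffice. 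Your argument for the positivity of $K_X+D$ is essentially the paper's.
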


\begin{proof} 
Denote  the log Hodge bundle $(E,\theta)=(E^{1,0}\oplus E^{0,1},\theta)$ by
$$
E^{1,0}:=\Omega^1_X(\log D), \quad E^{0,1}:=\cO_X.
$$ 
By \cite[Theorem 6.5]{Moc06} we have the following Bogomolov-Gieseker inequality for $(E,\theta)$
	\begin{align}\label{eq:BG}
	\big(2c_2(\Omega^1_X(\log X))-\frac{n}{n+1}c_1(\Omega^1_X(\log D))^2\big)\cdot c_1(L)^{n-2}=\\\nonumber
	\big(2c_2(E)-\frac{\rank\, E-1}{\rank\, E}c_1(E)^2\big)\cdot c_1(L)^{n-2}\geq 0 
	\end{align}  
	This shows the desired inequality \eqref{eq:BM2}.
	
	The rest of the proof will be divided into three steps. In Step 1, we shall  construct a uniformizing variation of Hodge structures on $X-D$ so that the corresponding period map defined in \eqref{eq:period} induces a holomorphic map (so-called \emph{period map} in \cref{rem:complete}) from the universal cover of $X-D$ to $\bB^n$ which is locally biholomorphic. By \eqref{eq:pullback}, this period map is moreover an \emph{isometry} if we equip   $X-D$ with hermitian metric induced by the Hodge metric.   In Step two we will prove that, when $D$ is smooth,   the hermitian metric on $X-D$ induced by the Hodge metric  is \emph{complete}. Together with arguments in \cref{rem:complete}, this proves that the above period map  is indeed a biholomorphism.   In Step three we shall prove \cref{item2} and the positivity of $K_X+D$.
	
\noindent \textbf{Step 1}.	 We apply  \cref{prop:converse} to the above system of log Hodge bundles $(E^{1,0}\oplus E^{0,1},\theta)$. Then there is a  principal system of log Hodge bundles  $(P,\tau)$ on $(X,D)$ with the structure group $K=P(GL(V^{1,0})\times GL(V^{0,1}))$ with $\rank\, V^{1,0}=\rank\, E^{1,0}=n$, and $\rank\, V^{0,1}=\rank\, E^{0,1}=1$.  Here we use the notations in \Cref{ex}.  Then by  \cref{prop:converse} the Hodge group relative to $(P,\tau)$  is $G_0=PU(n,1)$, and $K_0=K\cap G_0=P(U(n)\times U(1))=U(n)$.  For the complexified group $G=PGL(V)$ of $G_0$, its adjoint  representation $Ad:G\to GL(\kg)=GL(\ks\kl(V))$ is faithful. By \Cref{ex:adjoint}, this is a Hodge representation.  
By \Cref{lem:Hodge}, such Hodge representation $Ad$ induces a system of log Hodge bundles  $(P\times_{Ad}\kg,d(Ad)(\tau))$ over $(X,D)$.  It follows our construction of $(P,\tau)$ that
$$
(P\times_{Ad}\kg,d(Ad)(\tau))=(\End(E)^{\perp},\theta_{End(E)^{\perp}}).
$$
where $\End(E)^\perp$ is the trace-free subbundle of $\End(E)$, and $\theta_{End(E)^{\perp}}$ is the induced Higgs field from $(E,\theta)$. 
	
On the other hand,  an easy computation shows that $c_1(\End(E))=0$, and  
\begin{align*}
ch_2(\End(E))&=-2\rank\, E\cdot c_2(E)+(\rank\, E-1)c_1(E)^2\\
&=nc_1^2(K_X+D)-2(n+1)c_2(\Omega_X^1(\log D))=0 
\end{align*}
since  the equality in \eqref{eq:BG} holds by our assumption. Since we assume that $(E,\theta)$ is $\mu_L$-polystable, by \cref{prop:stability},    $(\End(E),\theta_{End(E)})$ is also $\mu_L$-polystable. We now apply \cref{prop:SM correspondence} to find a Hodge metric $h$ for the system of log Hodge bundle $(\End(E)|_{X-D},\theta_{End(E)}|_{X-D})$ which is adapted to  $(\End(E),\theta_{End(E)})$. Since  $(\End(E),\theta_{End(E)})=(\End(E)^{\perp},\theta_{End(E)^{\perp}}) \oplus (\cO_X,0)$, we conclude that $h=h_1\oplus h_2$, where $h_1$ is the harmonic metric for $(\End(E)^{\perp}|_{X-D},\theta_{End^{\perp}(E)}|_{X-D})$ which is adapted to  the log Higgs bundle $(\End^{\perp}(E),\theta_{End^{\perp}(E)})$, and $h_2$ is the canonical metric for the trivial Higgs bundle $(\cO_X,0)$.

We now apply \cref{Tannakian} to conclude that   $h_1$ induces a  reduction $P_H$ for $P|_{X-D}$ with the structure group $K_0=P(U(n)\times U(1))\simeq U(n)$, which is compatible with $h_1$ such that $(P|_{X-D},\tau|_{X-D},P_H)$ is a principal variation of Hodge structures on $X-D$. Note that
$$
T_X(-\log D)\xrightarrow{\tau}P\times_K\kg^{-1,1}=\hom(E^{1,0},E^{0,1})\simeq \hom(\Omega_X^1(\log D),\cO_X)
$$ 
is an isomorphism. Hence  $(P|_{X-D},\tau|_{X-D},P_H)$ is moreover a \emph{uniformizing variation of Hodge structures} over $X-D$ in the sense of \cref{def:uniformizing}.  By \cref{rem:complete}, it gives rise to a holomorphic map, the  so-called period map, 
\begin{align}\label{eq:period2}
\widetilde{X-D}\to \faktor{G_0}{K_0}=\faktor{PU(n,1)}{U(n)}\simeq \bB^n
\end{align}
 defined in \eqref{eq:period}, which is locally \emph{biholomorphic}.  Here $\widetilde{X-D}$ is the \emph{universal cover} of $X-D$. 
 
  Note that the reduction $P_H$ together with the hermitian metric $h_{\kg}$ in \eqref{eq:hermitian} gives rise to a natural metric $h_{H}$ over $P\times_{K}\kg|_{X-D}$ defined in \eqref{eq:metric induced}. By \cref{rem:complete} again, if the pull back $\tau^*h_H$ is a \emph{complete metric} on $X-D$, then $X-D$ is uniformized by $\faktor{G_0}{K_0}=\faktor{PU(n,1)}{U(n)}$ which is the complex unit ball of dimension $n$,  denoted by $\mathbb{B}^n$. It follows from \eqref{eq:compatible} that   $h_1=h_H$.  It now suffices to  show that $\tau^*h$ is complete if we want to prove that $X-D$ is uniformized by $\bB^n$, where  we recall $$\tau:T_X(-\log D)\stackrel{\simeq}{\to} \hom(E^{1,0},E^{0,1})\subset \End(E).$$
In next step, we will apply similar ideas by Simpson   \cite[Corollary 4.2]{Sim90} to prove this.  Note that until now we made no assumption on the smoothness of $D$.
\medskip

\noindent \textbf{Step 2}. Throughout Step 2, we will assume that $D$ is smooth.  Consider now the system of log Hodge bundles $(\cE,\eta):=(\End(E),\theta_{End(E)})$. We first mention that the above Hodge metric   $h$ for  $(\cE,\eta)|_{X-D}$ is adapted to log order in the sense of \cref{def:log order}. Indeed, it follows from \cite[Corollary 4.9]{Moc02} that the eigenvalues of monodromies of the flat connection $D:=d_h+\eta+\overline{\eta}_h$ around the divisor $D$ are 1.  By the \enquote{weak} norm estimate in \cite[Lemma 4.15]{Moc02}, we conclude that $h$ is adapted to  log order\footnote{Indeed, a strong norm estimate has already been obtained by Cattani-Kaplan-Schmid in \cite{CKS86}. Here we only need to know that $h$ is adapted to log order, which is a bit easier to obtain using Andreotti-Vesentini type results by Simpson \cite{Sim90} and Mochizuki \cite[Lemma 4.15]{Moc02}.}.  

 We  first give an estimate for $\tau^*h$.  For any point $x\in D$, consider an admissible coordinates $(U;z_1,\ldots,z_n)$ centered at $x$ as \cref{def:admissible} so that $D\cap U=(z_1=0)$. 
  To distinguish the sections of log Higgs bundles and log forms, we write 
$e_1:=d\log z_1$ and $e_i=dz_i$ for $i=2,\ldots,n$. 
Denote by $e_0=1$ the constant section of $\cO_X$. 
Let us  introduce a new metric $\tilde{h}$ on $(E,\theta)|_{U^*}$ as follows.
\begin{align*}
|e_1|^2_{\tilde{h}}&:=(-\log |z_1|^2);\quad 
\langle e_i,e_j\rangle_{\tilde{h}}:= 0\quad \mbox{for}\quad i\neq j;\\
|e_i|^2_{\tilde{h}}&:=   1\quad \mbox{for}\quad i=2,\ldots,n;\quad
|e_0|^2_{\tilde{h}}:=  (-\log |z_1|^2)^{-1}.
\end{align*}  
Write $h_{ii}:=|e_i|_{\tilde{h}}^2$,
and $F_{\tilde{h}}(E):=F_{\tilde{h}}(E)_{kj}\otimes e^*_j\otimes e_k$. Then for $i,j=2,\ldots,n$, one has 
\begin{align*}
F_{\tilde{h}}(E)_{11}&=F_{\tilde{h}}(E)_{10}=F_{\tilde{h}}(E)_{01}=F_{\tilde{h}}(E)_{0i}=F_{\tilde{h}}(E)_{j0}=0 \\
F_{\tilde{h}}(E)_{ij}&=(-\log |z_1|^2)^{-1}d\bar{z}_i\wedge d{z}_j \\
F_{\tilde{h}}(E)_{1i}&=\frac{1}{(-\log |z_1|^2)^2\bar{z}_1}d\bar{z}_1\wedge d{z}_i\\
F_{\tilde{h}}(E)_{i1}&=\frac{1}{(-\log |z_1|^2)z_1}d\bar{z}_i\wedge  {d{z}_1}  \\
F_{\tilde{h}}(E)_{00}&=\sum_{i=2}^{n} (-\log |z_1|^2)^{-1}dz_i\wedge d\bar{z}_i.
\end{align*}   
 In conclusion,  there is a constant $C_1>0$ so that one has
\begin{align}\label{eq:bound0} 
|F_{\tilde{h}}(E)|_{h,\omega_e}^2=\sum_{0\leq j,k\leq n}|F_{\tilde{h}}(E)_{kj}\otimes e^*_j\otimes e_k|_{h,\omega_e}^2 \leq \frac{C_1}{(-\log |z_1|^2)^3|z_1|^2}
\end{align}
over $U^*(\frac{1}{2})$ (notation  defined in \cref{def:admissible}), 
where $\omega_e=\sn \sum_{i=1}^{n}dz_i\wedge d\bar{z}_i$ is  the Euclidean metric on $U^*$. 

We abusively denote by $\tilde{h}$ the induced metric on $(\cE,\eta)|_{U^*}:=(\End(E),\theta_{End(E)})|_{U^*}$, which is adapted to log order on $(U,D\cap U)$ in the sense of \cref{def:log order} by our construction. Then 
\begin{align*}
  F_{\tilde{h}}(\cE)&=  F_{\tilde{h}}(E)\otimes \vvmathbb{1}_{E^*}+   \vvmathbb{1}_{E}\otimes F_{\tilde{h}^*}(E^*)\\
&= F_{\tilde{h}}(E)\otimes \vvmathbb{1}_{E^*}-   \vvmathbb{1}_{E}\otimes F_{\tilde{h}}(E)^\dagger 
\end{align*} 
where $F_{\tilde{h}}(E)^\dagger$ is the transpose of $F_{\tilde{h}}(E)$. Hence 
$$
F_{\tilde{h}}(\cE)(e_i\otimes e_j^*)=\sum_{k,\ell}(\delta_{j\ell}F_{\tilde{h}}(E)_{ik} -\delta_{ik}F_{\tilde{h}}(E)_{\ell j})(e_k\otimes e_\ell^*)
$$
for $0\leq i,j,k,\ell\leq n$.
It then follows from \eqref{eq:bound0} that 
\begin{align}\label{eq:bound2}
|F_{\tilde{h}}(\cE)|_{h,\omega_e}^2\leq \frac{C_2}{(-\log |z_1|^2)^3|z_1|^2} 
\end{align}
over $U^*(\frac{1}{2})$ for some constant $C_2>0$.
Consider the identity map $s$ for $\cE$, which can be seen as a holomorphic section of $\End(\cE,\cE)$. We denote by $(\cF,\Phi):=(\End(\cE,\cE),\eta_{End(\cE)})$   the induced Higgs bundle by $(\cE,\eta)$. 
One can check that 
\begin{align}\label{eq:kernel}
\Phi(s)=0.
\end{align} 
We equip $\cF|_{U^*}$ with the  metric $h_\cF:=\tilde{h}\otimes h^{*}$, where $h$ is the harmonic metric for $(\cE,\eta)|_{X-D}$ constructed in Step one. Note that
\begin{align*}
  F_{h_\cF}(\cF)&=  F_{\tilde{h}}(\cE)\otimes \vvmathbb{1}_{\cE^*}+  \vvmathbb{1}_{\cE}\otimes F_{h^*}(\cE^*)\\
&= F_{\tilde{h}}(\cE)\otimes \vvmathbb{1}_{\cE^*}
\end{align*} 
By \eqref{eq:bound2}, 
there is a constant $C_0>0$ so that one has
\begin{align}\label{eq:bound}
|F_{h_\cF}(\cF)|_{h_\cF,\omega_e}\leq \frac{C_0}{(-\log |z_1|^2)^{\frac{3}{2}}|z_1|} 
\end{align}
over $U^*(\frac{1}{2})$.
Then
\begin{align*}
\hess \log |s|^2_{h_\cF}
&\geq -\frac{\sn\{R_{h_\cF}s,s\}}{|s|^2_{h_\cF}}\\
&=-\frac{\sn\{\Phi s,\Phi s\}}{|s|^2_{h_\cF}}-\frac{\sn\{\overline{\Phi}_{h_\cF}s,\overline{\Phi}_{h_\cF}s\}}{|s|^2_{h_\cF}} -\frac{\sn\{F_{h_\cF}(\cF)s,s\}}{|s|^2_{h_\cF}}\\
&\geq  -\frac{\sn\{F_{h_\cF}(\cF)s,s\}}{|s|^2_{h_\cF}}.
\end{align*}
Here the third  inequality follows from \eqref{eq:kernel}. For any $\xi=(\xi_2,\ldots,\xi_n)$ with $0\leq \xi_2,\ldots, \xi_n\leq \frac{1}{2}$, we define  a smooth function $f_\xi$ over $\Delta^*$ parametrized by $\xi$ by
$$
f_\xi(z_1):=\log |s|^2_{h_\cF}(z_1,\xi_2,\ldots,\xi_n).
$$
Then the above inequality together with \eqref{eq:bound} implies that
$$
\Delta f_\xi\geq -|F_{h_\cF}(\cF)|_{h_\cF,\omega_e}\geq -\frac{C_0}{(-\log |z_1|^2)^{\frac{3}{2}}|z_1|}=:\varphi
$$
where $C_0$  is some uniform constant which does not depend on $\xi$.
Note that
\begin{align}\label{eq:harmonic}
\lVert \varphi  \rVert_{L^2}:=\int_{0<|z_1|<\frac{1}{2}}|\varphi(z_1)|^2dz_1 d\bar{z}_1<C_4
\end{align}  
for some constant $C_4>0$. 
For any fixed $0\leq \xi_2,\ldots, \xi_n\leq \frac{1}{2}$,
consider the Dirichlet problem
\begin{align}
\begin{cases}
\phi=f_\xi \quad \mbox{on}\quad   \{z_1\mid |z_1|=\frac{1}{2}\}\\
\Delta \phi=\varphi\quad \mbox{on}\quad \{z_1\mid 0<|z_1|<\frac{1}{2}\}
\end{cases}
\end{align}
By \eqref{eq:harmonic} and the elliptic estimate,   one has
\begin{align}\label{eq:harmonic2}
\sup_{0<|z_1|<\frac{1}{2}}|\phi(z_1)|\leq C_5 (\lVert \varphi\rVert_{L^2}+\sup_{|z_1|=\frac{1}{2}} f_\xi). 
\end{align}
over $ \{z_1\mid 0<|z_1|<\frac{1}{2}\}$ for some uniform positive constant $C_5$ which does not depending on $\xi$. Hence $\Delta(f_\xi-\phi)\geq 0$ over $ \{z_1\mid 0<|z_1|<\frac{1}{2}\}$.  
 Since both $h$ and $\tilde{h}$ are adapted to log order, so is  $h_{\cF}$. Hence there is a constant $C_6>0$ so that
$$ 
  \log |s|^2_{h_\cF}\leq   C_6\log (-\sum_{i=1}^{\ell}\log |z_i|) 
$$ 
over $U^*(\frac{1}{2})$. By \cref{lem:extension2}, we conclude that $f_{\xi}-\phi$ extends to a subharmonic function on $ \{z_1\mid  |z_1|<\frac{1}{2}\}$. Note that   $f_\xi(z_1)-\phi(z_1)=0$ when $|z_1|=\frac{1}{2}$. Hence by maximum principle, 
$$
f_\xi(z_1)\leq \phi(z_1)
$$
for any $0<|z_1|<\frac{1}{2}$. Let 
$$
C_7:=\sup_{|z_1|=\frac{1}{2},0\leq \xi_2,\ldots,\xi_n\leq \frac{1}{2}} f_\xi(z_1)
$$
which is finite. By \eqref{eq:harmonic} and \eqref{eq:harmonic2}, we have
$$ 
\sup_{0<|z_1|<\frac{1}{2},0\leq z_2,\ldots,z_n\leq \frac{1}{2}} \log |s|^2_{h_\cF}(z_1, \ldots,z_n)\leq C_5(C_4+C_7).
$$ 
This implies that ${h}\geq C_8\cdot \tilde{h}$  over $U^*(\frac{1}{2})$ for some constant $C_8>0$. By \eqref{eq:bound2}, one has 
\begin{align*} 
|F_{\tilde{h}^*}(\cE^*)|_{h^*,\omega_e}^2\leq \frac{C_0}{(-\log |z_1|^2)^3|z_1|^2}.
\end{align*} 
Hence if  we use the   metric $ {h}\otimes \tilde{h}^{*}$ for $\cF$ and do the same proof, we can prove that
 ${h}\leq C_9\cdot \tilde{h}$  over $U^*(\frac{1}{2})$ for some constant $C_9>0$. Therefore, $\tilde{h}$ and $h$ are \emph{mutually bounded} on $U^*(\frac{1}{2})$.  
By
\begin{align} \label{eq:p1}
\tau (z_1\frac{\d}{\d z_1})&=e_1^*\otimes e_0  \\ \label{eq:p2}
\tau(\frac{\d}{\d z_j})&=e_j^*\otimes e_0 \quad \mbox{for} \  \quad j=2,\ldots,n,
\end{align}
we obtain the norm estimate for the metric
\begin{align}\label{eq:toroidal}
\tau^*h\sim \tau^*\tilde{h}= \frac{\sqrt{-1}dz_1\wedge d\bar{z}_1}{|z_1|^2(\log |z_1|^2)^2}+\sum_{k=2}^{n} \frac{\sqrt{-1}dz_k\wedge d\bar{z}_k }{-\log|z_1|^2} 
\end{align}
Though $\tau^*h$ is strictly less than the Poincar\'e metric near $D$, one can easily prove that it is still a   \emph{complete   metric}. 
Therefore, the hermitian metric $\tau^*h_H=  \tau^*h$ on $X-D$ is also  complete. Based on \cref{rem:complete}, we conclude that $X-D$ is uniformized by the complex unit ball of dimension $n$, namely, there is a torsion free lattice $\Gamma\subset PU(n,1)$ so that $X-D\simeq \faktor{\bB^n}{\Gamma}$. By \eqref{eq:p1} and \eqref{eq:p2}, the canonical K\"ahler-Einstein metric $\omega:=\tau^*h$ for $T_{X}(-\log D)|_{U}$ is  adapted to log order.  It   follows from \cref{thm:rigidity} that $X$ is the unique toroidal compactification for the non-compact ball quotient $\faktor{\bB^n}{\Gamma}$.  
 We accomplish the proof of \cref{item1}. 
\medskip

\noindent \textbf{Step 3.} 
Assume now  $D$ is not smooth.  By \eqref{eq:period2}, the period map $\widetilde{X-D}\to \bB^n$ is locally biholomorphic. Assume by contradiction that it is an isomorphism. Since $h$ is adapted to log order, the canonical K\"ahler-Einstein metric $\omega:=\tau^*h$ for $T_{X}(-\log D)|_{U}$ is also  adapted to log order. It   follows from \cref{thm:rigidity} that $D$ cannot be singular. The contradiction is obtained, and thus the period map is not a uniformizing mapping. We proved \cref{item2}.

Let us show that $K_{X}+D$ is big, nef and ample over $X-D$.   Note that the metric $\det \omega^{-1}$ for $(K_X+D)|_{U}$ is adapted to log order, and that
$$
\frac{\sqrt{-1}}{2\pi}R_{\det \omega^{-1}}((K_X+D)|_{U})=(n+1)\omega.
$$
By \cref{lem:current}, the   hermitian metric $\det \omega^{-1}$   extends to a singular hermitian metric $h_{K_X+D}$ for $K_X+D$ with zero Lelong numbers. Hence $K_X+D$ is nef. Since $\sn R_{h_{K_X+D}}(K_X+D)>0$ on $X-D$, $K_X+D$ is thus big and ample over $X-D$. 
We finish the proof of the theorem.
\end{proof}
\begin{rem}
Note that the asymptotic behavior of the metric \eqref{eq:toroidal} is exactly the same as that of the K\"ahler-Einstein metric for the ball quotient near the boundary of its toroidal compactification (see \cite[eq. (8) on  p. 338]{Mok12}). This is indeed the hint for our construction of  $\tilde{h}$.
\end{rem}
\begin{rem}
	We expect that \cref{item2} cannot  happen. This is   the case when $\dim X=2$. Indeed,  when the Miyaoka-Yau type equality in \eqref{eq:BM} holds, together with the conclusion that  $K_X+D$  is big, nef and ample over $X-D$ in \cref{thm:equality}, it follows from \cite{Kob85} that $X-D$ is uniformized by $\bB^2$, which is a contradiction to \cref{item2}. This is not surprising: consider the smooth toroidal compactification $X$ of a two dimensional ball quotient $\faktor{\bB^2}{\Gamma}$ with $D:=X-\faktor{\bB^2}{\Gamma}$,    \eqref{eq:equality} holds by \cref{main2}.    Let $x\in D$ and let $\pi:Y={\rm Bl}_xX\to X$. Then $Y$ is a projective surface compactifying $\faktor{\bB^2}{\Gamma}$ with the boundary $D_Y:=\pi^*D$   a simple normal crossing (not smooth) divisor. However, one has
	$$ 
		3c_2(\Omega_{Y}^1(\log D_Y))- c_1(\Omega_{Y}^1(\log D_Y))^2=1,
	$$  
	which violates  the condition  of uniformization in \cref{thm:equality}.
\end{rem}
\section{Higgs bundles associated to non-compact ball quotients}
In this section, we  will prove \cref{main2}. \cref{sec:positivity,sec:extension} are technical preliminaries. In \cref{sec:analysis} we prove that a log Higgs bundle $(E,\theta)$ on a compact K\"ahler log pair is slope polystable with respect to some polarization by big and nef cohomology $(1,1)$-class, if $(E,\theta)$ admits a  Hermitian-Yang-Mills metric with \enquote{mild singularity} near the boundary divisor. In \cref{sec:toroidal} we use the Bergman metric for  quotients of complex unit balls by   torsion free lattices   to construct such Hermitian-Yang-Mills metric. This proves \cref{main2}.
\subsection{Notions of  positivity for curvature tensors}\label{sec:positivity}
We recall some notions of  positivity for Higgs bundles in \cite[\S 1.3]{DH19}. 

Let $(E,\theta)$ be a Higgs bundle endowed with a smooth metric $h$. For any $x\in X$, let $e_1,\ldots,e_r$ be a frame of $E$ at $x$, and let $e^1,\ldots,e^r$  be its dual in  $E^*$. Let $z_1,\ldots,z_n$ be a local coordinate centered at $x$. We write
$$
F_h(E)=R_h(E)+[\theta,\overline{\theta}_h]=R_{j\bar{k}\alpha}^\beta dz_j\wedge d\bar{z}_k\otimes e^\alpha\otimes e_\beta
$$
Set  $R_{j\bar{k}\alpha\bar{\beta}}:=h_{\gamma\bar{\beta}}R_{j\bar{k}\alpha}^\gamma$, where $h_{\gamma\bar{\beta}}=h(e_\gamma,e_\beta)$.  $F_h(E)$ is called \emph{Nakano semi-positive} at $x$ if 
$$
\sum_{ j,k,\alpha,\beta}R_{j\bar{k}\alpha\bar{\beta}}u^{j\alpha} \overline{u^{k\beta}} \geq 0
$$ 
for any $u=\sum_{j,\alpha}u^{j\alpha}\frac{\partial}{\partial z_j}\otimes e_\alpha\in (T_{X}^{1,0}\otimes E)_x$.    $(E,\theta,h)$ is called Nakano  semipositive if $F_h(E)$ is  Nakano semi-positive  at every $x\in X$. When $\theta=0$, this reduces to the  same   positivity concepts in \cite[Chapter \rom{7}, \S 6]{Dembook} for vector bundles. 

We write 
$$F_h(E)\geq_{\nak} \lambda (\omega\otimes \vvmathbb{1}_E) \quad \mbox{ for } \lmd\in \mathbb{R}$$
if
$$
\sum_{ j,k,\alpha,\beta}(R_{j\bar{k}\alpha\bar{\beta}}-\lambda\omega_{j\bar{k}}h_{\alpha\bar{\beta}})(x)u^{j\alpha}\overline{u^{k\beta}}   \geq 0
$$
for    any $x\in X$ and any $u=\sum_{j,\alpha}u^{j\alpha}\frac{\partial}{\partial z_j}\otimes e_\alpha\in (T_{X}^{1,0}\otimes E)_x$. 
  
Let us recall the following lemma in \cite[Lemma 1.8]{DH19}.
\begin{lem}\label{lem:acceptable}
	Let $(E,\theta,h)$ be a Higgs bundle on a K\"ahler manifold $(X,\omega)$. If there is a positive constant $C$ so that $|F_h(x)|_{h,\omega}\leq C$ for any $x\in X$, then 
	$$
	C \omega\otimes \vvmathbb{1}_{E}\geq_{\nak} 	F_h\geq_{\nak} -C \omega\otimes \vvmathbb{1}_{E}.
	$$
\end{lem}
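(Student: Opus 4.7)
The plan is to reduce the statement to a pointwise linear-algebra fact. Fix a point $x\in X$ and choose holomorphic coordinates $(z_j)$ centered at $x$ together with a unitary frame $(e_\alpha)$ of $E$ at $x$ such that $\omega_{j\bar k}(x)=\delta_{jk}$ and $h_{\alpha\bar\beta}(x)=\delta_{\alpha\beta}$. In these frames the hypothesis reads
$$
|F_h(x)|_{h,\omega}^2 \;=\; \sum_{j,k,\alpha,\beta}|R_{j\bar k\alpha\bar\beta}(x)|^2 \;\le\; C^2,
$$
that is, the Hilbert--Schmidt norm of the coefficient array is bounded by $C$.

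Next I would verify that $(R_{j\bar k\alpha\bar\beta})$ enjoys the Hermitian symmetry $\overline{R_{j\bar k\alpha\bar\beta}}=R_{k\bar j\beta\bar\alpha}$. For the Chern curvature $R_h(E)$ this is standard. For the Higgs piece $[\theta,\bar\theta_h]$ one writes $\theta=\sum_j\theta_j\,dz^j$ with $\theta_j\in\End(E)$ and uses that the coefficients of $\bar\theta_h$ are the $h$-adjoints $\theta_k^*$; the components $(\theta_j\theta_k^*-\theta_k^*\theta_j)_{\alpha\bar\beta}$ then visibly satisfy the same symmetry. Consequently the array $\bigl(R_{j\bar k\alpha\bar\beta}(x)\bigr)$ defines a Hermitian matrix $M$ on $\mathbb{C}^{nr}$ indexed by the composite indices $(j,\alpha)$, and the Nakano form $Q(u):=\sum R_{j\bar k\alpha\bar\beta}u^{j\alpha}\overline{u^{k\beta}}$ is precisely the real Hermitian form attached to $M$ on $T_{X,x}^{1,0}\otimes E_x\cong\mathbb{C}^{nr}$.

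The final step is the elementary inequality $\|M\|_{\mathrm{op}}\le\|M\|_{\mathrm{HS}}$ valid for any Hermitian matrix, which yields
$$
|Q(u)| \;\le\; \|M\|_{\mathrm{op}}\,|u|^2 \;\le\; \|M\|_{\mathrm{HS}}\,|u|^2 \;=\; |F_h(x)|_{h,\omega}\,|u|^2 \;\le\; C\,|u|^2,
$$
where $|u|^2=\sum_{j,\alpha}|u^{j\alpha}|^2=\sum\omega_{j\bar k}(x)h_{\alpha\bar\beta}(x)u^{j\alpha}\overline{u^{k\beta}}$ in the chosen frames. Rearranging gives $-C\,\omega\otimes\vvmathbb{1}_E\le_{\nak}F_h\le_{\nak}C\,\omega\otimes\vvmathbb{1}_E$ at $x$, and since $x$ was arbitrary this is the claim. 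I do not expect a genuine obstacle: the only point that deserves care is the Hermitian-symmetry check for the commutator term $[\theta,\bar\theta_h]$, and that is immediate from the definition of $\bar\theta_h$ as the $h$-adjoint of $\theta$.
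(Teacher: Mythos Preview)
Your argument is correct. The paper itself does not supply a proof of this lemma but only cites \cite[Lemma 1.8]{DH19}; your pointwise reduction---orthonormal frames, the Hermitian symmetry $\overline{R_{j\bar k\alpha\bar\beta}}=R_{k\bar j\beta\bar\alpha}$ (which indeed holds for the $[\theta,\bar\theta_h]$ piece since $(\theta_j\theta_k^\ast-\theta_k^\ast\theta_j)^\ast=\theta_k\theta_j^\ast-\theta_j^\ast\theta_k$), and the inequality $\|M\|_{\mathrm{op}}\le\|M\|_{\mathrm{HS}}$---is the standard route and is sound.
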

  The following easy fact  in \cite[Lemma 1.9]{DH19} will be useful in this paper.
 \begin{lem}\label{lem:tensor}
 	Let $(E_1,\theta_2,h_1)$ and $(E_2,\theta_2,h_2)$ are two metrized Higgs bundles over a K\"ahler manifold $(X,\omega)$ such that $|F_{h_1}(x)|_{h_1,\omega}\leq C_1$ and $|F_{h_2}(x)|_{h_2,\omega}\leq C_2$ for all $x\in X$. Then for the hermitian vector bundle $(E_1\otimes E_2,h_1h_2)$,  one has
 	$$
 	|F_{h_1\otimes h_2}(x)|_{h_1\otimes h_2,\omega}\leq \sqrt{2r_2C^2_1+2r_1C^2_2}
 	$$
 	for all $x\in X$. Here $r_i:=\rank E_i$.
 \end{lem}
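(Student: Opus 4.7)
The statement is about tensor products of metrized Higgs bundles, so the first step is to decompose the Higgs curvature of $(E_1\otimes E_2, \theta_1\otimes \vvmathbb{1}_{E_2}+\vvmathbb{1}_{E_1}\otimes \theta_2, h_1 h_2)$ into a sum of two contributions, one from each factor. Recall that $F_{h_i}=R_{h_i}+[\theta_i,\overline{\theta}_{i,h_i}]$. For the Chern curvature $R_{h_1 h_2}$ of the tensor product metric, the standard Leibniz formula gives
\[
R_{h_1h_2}(E_1\otimes E_2) \;=\; R_{h_1}(E_1)\otimes \vvmathbb{1}_{E_2}+\vvmathbb{1}_{E_1}\otimes R_{h_2}(E_2).
\]
The plan is to verify the analogous identity
\[
F_{h_1h_2}(E_1\otimes E_2) \;=\; F_{h_1}(E_1)\otimes \vvmathbb{1}_{E_2}+\vvmathbb{1}_{E_1}\otimes F_{h_2}(E_2),
\]
by checking that the four bracket terms of $[\theta_1\otimes \vvmathbb{1}+\vvmathbb{1}\otimes\theta_2,\,\overline{\theta}_{1,h_1}\otimes \vvmathbb{1}+\vvmathbb{1}\otimes\overline{\theta}_{2,h_2}]$ split as $[\theta_1,\overline{\theta}_{1,h_1}]\otimes\vvmathbb{1}+\vvmathbb{1}\otimes[\theta_2,\overline{\theta}_{2,h_2}]$ plus two cross terms which cancel. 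The cancellation of the cross terms is the one small computational point to check: the form parts $\theta_1\wedge \overline{\theta}_{2,h_2}$ and $\overline{\theta}_{2,h_2}\wedge\theta_1$ are opposite (anti-commutativity of $1$-forms), while the endomorphism parts $\theta_1\otimes\overline{\theta}_{2,h_2}$ and $\overline{\theta}_{2,h_2}\otimes\theta_1$ agree (they live in commuting subalgebras $\End(E_1)\otimes\vvmathbb{1}$ and $\vvmathbb{1}\otimes\End(E_2)$ of $\End(E_1\otimes E_2)$), so they sum to zero.

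Granting this splitting, the estimate is purely linear algebra on each fiber. Fix $x\in X$, pick $h_i$-orthonormal frames of $E_{i,x}$ and the induced orthonormal frame of $(E_1\otimes E_2)_x$. Then for any $\End(E_1)$-valued $2$-form $A$ one has $|A\otimes \vvmathbb{1}_{E_2}|_{h_1h_2,\omega}^2=r_2\,|A|_{h_1,\omega}^2$, since the Hilbert--Schmidt norm of $\vvmathbb{1}_{E_2}$ equals $\sqrt{r_2}$; similarly $|\vvmathbb{1}_{E_1}\otimes B|_{h_1h_2,\omega}^2=r_1\,|B|_{h_2,\omega}^2$. Applied to $A=F_{h_1}(E_1)$ and $B=F_{h_2}(E_2)$ this gives, at the point $x$,
\[
|F_{h_1}(E_1)\otimes \vvmathbb{1}_{E_2}|_{h_1h_2,\omega}^2 \le r_2 C_1^2, \qquad |\vvmathbb{1}_{E_1}\otimes F_{h_2}(E_2)|_{h_1h_2,\omega}^2 \le r_1 C_2^2.
\]
Combining these with the elementary inequality $|u+v|^2\le 2(|u|^2+|v|^2)$ produces the desired bound $|F_{h_1 h_2}|_{h_1 h_2,\omega}\le \sqrt{2r_2 C_1^2+2r_1 C_2^2}$.

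No step is genuinely hard: the only place requiring care is the bookkeeping of the cross terms in step one. Everything else is just the tensor-product behavior of Hilbert--Schmidt norms and the triangle inequality.
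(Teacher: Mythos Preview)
Your argument is correct and is the standard one: the splitting $F_{h_1h_2}=F_{h_1}\otimes\vvmathbb{1}_{E_2}+\vvmathbb{1}_{E_1}\otimes F_{h_2}$ (with the cross terms cancelling as you explain), followed by the Hilbert--Schmidt identity $|A\otimes\vvmathbb{1}_{E_2}|^2=r_2|A|^2$ and the elementary inequality $|u+v|^2\le 2|u|^2+2|v|^2$. The paper itself gives no proof here; it simply cites \cite[Lemma 1.9]{DH19}, so there is nothing further to compare.
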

\subsection{Some pluripotential theories}\label{sec:extension}
In this subsection we recall some results of deep pluripotential theories in \cite{BEGZ,Gue14}.  The results in this subsection will  be used in the proof of \cref{prop:analysis}. Let us first recall the definitions of big or nef cohomology $(1,1)$-classes in \cite[\S 6]{Dem12}.
\begin{dfn}\label{def:bignef}
	Let $(X,\omega)$ be a compact K\"ahler manifold. Let $\alpha\in H^{1,1}(X,\bR)$ be a cohomology $(1,1)$-class of $X$.  The class $\alpha$ is \emph{nef} (numerically eventual free) if for any $\ep>0$, there is a smooth closed $(1,1)$-form $\eta_\ep\in \alpha$ so that $\eta_\ep\geq -\ep \omega$. The class  $\alpha$ is \emph{big} if there is a closed positive $(1,1)$-current $T\in \alpha$ so that $T\geq \delta\omega$ for some $\delta>0$. Such  a current $T$ will be called a \emph{K\"ahler current}.
\end{dfn} 

Let $X$ be a complex manifold of dimension $n$ and let $U\subset X$ be a Zariski open set of $X$. Pick a smooth hermitian form $\omega$ on $X$. For any smooth differential form $\eta$ of degree $p$ on $U$ so that 
\begin{align*}
\int_{U} |\eta|_{\omega}\wedge \omega^{n} <+\infty,
\end{align*}
 one can \emph{trivially} extend $\eta$ to   \emph{a current $T_\eta$ on $X$ of degree $n-p$} by setting
\begin{align}\label{eq:trivial}
\langle T_\eta, u\rangle:=\int_{U}\eta \wedge u
\end{align}
where $u$ is the any \emph{test form} of degree $p$ which has compact support.  In general, $T_\eta$ might not be closed even if $\eta$ is closed.

Let $(X,\omega)$ be a compact K\"ahler manifold of dimension $n$. Let $\alpha_1,\ldots,\alpha_n$ be big   cohomology classes. Let $T_i\in \alpha_i$ be positive closed $(1,1)$-currents whose local potential is locally bounded outside a closed analytic subvariety  of $X$ (a particular case of \emph{small unbounded locus}  of \cite[Definition 1.2]{BEGZ}). In this celebrated work by Boucksom-Eyssidieux-Guedj-Zariahi \cite{BEGZ}, they defined  non-pluripolar product for these currents
$$
\langle T_1\wedge\cdots \wedge T_p\rangle
$$
which is a closed positive $(p,p)$-current, and does not charge on any closed proper analytic subsets. Therefore, if we assume further that $T_i$ is smooth over $X-A$ where $A$ is a closed analytic subvariety of $X$, then $\langle T_1\wedge\cdots \wedge T_p\rangle$ is nothing but the trivial extension of the $(p,p)$-form $ (T_1\wedge\cdots \wedge T_p)|_{X-A}$ to $X$.

Following \cite[Definition 1.21]{BEGZ}, for a big class $\alpha$, a positive $(1,1)$-current $T\in \alpha$ has \emph{full Monge-Amp\`ere mass} if
$$
\int_X \langle T_i^n\rangle =\vol (\alpha).
$$
The set of such positive currents in $\alpha$ with full Monge-Amp\`ere mass is denoted by $\cE(\alpha)$.
We will not recall the definition of the \emph{volume of big classes} by Boucksom in \cite{Bou02}. We just mention that when the class $\alpha$ is big and nef, one has
$$
\vol(\alpha)=\alpha^n.
$$

The following lemma will be used in \cref{sec:analysis}.
\begin{lem}\label{lem:Gue}
	Let $(X,\omega)$ be a compact K\"ahler manifold and let $D$ be a simple normal crossing divisor on $X$. Let $S$ be a closed positive $(1,1)$-current on $X$ so that $S|_{X-D}$ is a smooth $(1,1)$-form over $X-D$ which is strictly positive at one point  and has at most \emph{Poincar\'e growth} near $D$. Then the cohomology class $\alpha:=\{S\}$ is big and nef, and $S\in \cE(\alpha)$.
\end{lem}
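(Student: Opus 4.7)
I will prove the three assertions in turn. The underlying reason they all hold is that the Poincar\'e growth hypothesis forces the singularities of $S$ along $D$ to be of $\log|\log|z||$-type only, which is extremely mild from the pluripotential-theoretic viewpoint.

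For \emph{nefness}, I would show $S$ has zero Lelong number at every point of $X$ and then invoke Demailly's regularization theorem. Off $D$, smoothness of $S$ gives the claim at once. At a point $p \in D$, monotonicity of Lelong numbers for closed positive $(1,1)$-currents combined with the inequality $S \leq C\omega_P$ near $D$ yields $\nu(S, p) \leq C\nu(\omega_P, p)$; and since a local psh potential for $\omega_P$ is $\phi_P = -\sum_j \log(-\log|z_j|^2)$, whose supremum over $|z| \leq r$ grows only like $\log|\log r|$, one has $\nu(\omega_P, p) = 0$. Demailly's regularization then produces smooth $\eta_\delta \in \alpha$ with $\eta_\delta \geq -\delta\omega$ for all $\delta > 0$, which is the definition of nefness.

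For \emph{bigness}, since $\alpha$ is nef, Boucksom's theorem gives $\vol(\alpha) = \alpha^n$, so it suffices to produce $\alpha^n > 0$. The non-pluripolar product $\langle S^n\rangle$ puts no mass on the analytic set $D$ and, as $S|_{X-D}$ is smooth, coincides there with the smooth pointwise product $(S|_{X-D})^n$. Positivity of $S$ together with smoothness and strict positivity at one point yield $(S|_{X-D})^n > 0$ on an open neighborhood, so $\int_X \langle S^n\rangle > 0$. Combined with $\int_X \langle S^n\rangle \leq \vol(\alpha)$, this gives bigness.

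For the \emph{full Monge-Amp\`ere mass}, which is the main technical obstacle, I would first show that for small $\epsilon > 0$ the K\"ahler current $S^{(\epsilon)} := S + \epsilon\omega$ has full mass in the K\"ahler class $\alpha_\epsilon := \alpha + \epsilon\{\omega\}$. Writing $S^{(\epsilon)} = \omega_0 + \hess\varphi$ with $\omega_0 \in \alpha_\epsilon$ K\"ahler and $\varphi$ an $\omega_0$-psh function smooth off $D$ with log-log singularities on $D$, the truncations $\varphi_k := \max(\varphi, -k)$ are bounded $\omega_0$-psh, so Bedford--Taylor/Stokes gives $\int_X (\omega_0 + \hess\varphi_k)^n = \alpha_\epsilon^n$. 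Decomposing this integral along $\{\varphi > -k\}$ and $\{\varphi \leq -k\}$ and using that $\{\varphi = -\infty\} \subset D$ has Lebesgue measure zero forces $\int_{\{\varphi \leq -k\}}\omega_0^n \to 0$, and the BEGZ definition of the non-pluripolar product then yields $S^{(\epsilon)} \in \cE(\alpha_\epsilon)$. By multilinearity of non-pluripolar products for currents with small unbounded locus, $\int_X \langle (S + \epsilon\omega)^n\rangle$ expands as a polynomial in $\epsilon$ that must match the cohomological polynomial $\alpha_\epsilon^n$; equating constant terms gives $\int_X \langle S^n\rangle = \alpha^n$. The main obstacle is precisely this full-mass step: the log-log mildness of Poincar\'e singularities is what prevents mass loss, since any stronger blow-up would concentrate singular mass on $D$. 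The argument parallels the treatment of Poincar\'e cusp K\"ahler--Einstein metrics in \cite{Gue14}, to which I would defer for the technical details.
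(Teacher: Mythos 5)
Your treatment of nefness and bigness is essentially the paper's. The paper compares $S$ with the Cornalba--Griffiths K\"ahler current $T$ of \cref{rem:Poincare} to obtain the explicit bound $\varphi_S\geq -C_1\log(-\prod_{i}\log|z_i|^2)-C_2$ on local potentials near $D$, deduces that all Lelong numbers vanish (hence $\alpha$ is nef), and gets bigness from strict positivity at one point via \cite{Bou02}; your Lelong-number comparison with $\omega_P$ and your volume computation are the same arguments in slightly different clothing. (Both you and the paper leave implicit that the Poincar\'e-growth hypothesis, which a priori is an inequality only on $X-D$, must be upgraded to an inequality of currents across $D$, i.e.\ that $S$ carries no divisorial mass along $D$; since the paper does the same, I do not count this against you.)

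The genuine gap is in your full-mass step. You claim that because $\{\varphi=-\infty\}\subset D$ has Lebesgue measure zero, the masses $\int_{\{\varphi\leq -k\}}\omega_0^n$ tend to $0$ and hence $S+\epsilon\omega\in\cE(\alpha_\epsilon)$. But the quantity governing membership in $\cE$ is $\int_{\{\varphi\leq -k\}}\big(\omega_0+\hess\max(\varphi,-k)\big)^n$, not $\int_{\{\varphi\leq -k\}}\omega_0^n$: on $\{\varphi\leq -k\}$ the truncated Monge--Amp\`ere measure is not $\omega_0^n$, and Lebesgue-negligibility of the polar set does not rule out mass concentration --- an $\omega_0$-psh potential with a single logarithmic pole has polar set a point, yet it loses exactly its Lelong-type mass and is not of full mass. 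The correct argument must use quantitatively the log-log lower bound on the potential (capacity of sublevel sets, or comparison with the model Poincar\'e potential), and this is precisely what \cite[Proposition 2.3]{Gue14} provides; the paper derives the potential bound as above and then simply invokes that proposition for the big and nef class $\alpha$ directly, so the $\epsilon$-perturbation and coefficient-matching you propose (which is fine, granted multilinearity of non-pluripolar products) is not even needed. Your plan therefore works only after replacing the measure-zero mechanism by the potential estimate plus \cite[Proposition 2.3]{Gue14}, at which point it coincides with the paper's proof.
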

\begin{proof}
	Let $T$ be the K\"ahler current on $X$ constructed in \cref{rem:Poincare}. Since $T|_{X-D}$ has at most Poincar\'e growth near $D$, there exists a constant $C_1>0$ so that
	$$
	C_1T-S\geq 0.
	$$
Pick any point $x\in D$. Then there exists some admissible coordinates $(U;z_1,\ldots,z_n)$ centered at $x$ so that the local potential $\varphi$ of $S$ satisfies that 
	$$\varphi\geq -C_1\log (-\prod_{i=1}^{\ell}\log |z_1|^2)-C_2$$
	for some constant $C_2>0$. Hence $S$ has zero Lelong numbers everywhere and thus $\alpha$ is nef. Since $S$ is strictly positive at one point on $X-D$, it is big by \cite{Bou02}. It follows from \cite[Proposition 2.3]{Gue14} that $S\in \cE(\alpha)$. The lemma is proved.
	\end{proof}
Let us recall an important theorem in \cite{BEGZ}.
 \begin{thm}[\!\!\protecting{\cite[Corollary 2.15]{BEGZ}}]\label{thm:DDL}
 	Let $(X,\omega)$ be a compact K\"ahler manifold of dimension $n$.  Let $\alpha_1,\ldots,\alpha_n$ be big  and nef classes on $X$.  For $T_i\in \cE(\alpha_i)$ which are all smooth outside a closed proper analytic subset $A$, one has
 	$$
 	\int_{X-A}T_1\wedge\cdots \wedge T_n=\int_X\langle T_1\wedge\cdots \wedge T_n\rangle=\alpha_1\cdots \alpha_n.
 	$$
 \end{thm}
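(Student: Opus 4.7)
The plan is to establish the two equalities in turn, with the second being the substantive one. For the first equality
\(\int_{X-A}T_1\wedge\cdots\wedge T_n=\int_X\langle T_1\wedge\cdots\wedge T_n\rangle\),
I would invoke the defining property of the non-pluripolar product from \cite{BEGZ}: it is a Borel measure that charges no pluripolar set. Since \(A\) is a proper closed analytic subset it is pluripolar, hence \(\langle T_1\wedge\cdots\wedge T_n\rangle(A)=0\). On \(X-A\) each \(T_i\) is a smooth \((1,1)\)-form, so the non-pluripolar product agrees there with the ordinary wedge product of forms, and the two integrals represent the same Borel measure evaluated on all of \(X\).

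For the second equality I would first dispose of the diagonal case: when \(T_1=\cdots=T_n=T\in\cE(\alpha)\) with \(\alpha\) big and nef, the definition of full Monge--Amp\`ere mass gives \(\int_X\langle T^n\rangle=\vol(\alpha)\), and Boucksom's identity \(\vol(\alpha)=\alpha^n\) for big and nef classes closes this case. The strategy in the mixed case is to polarize. I would approximate \(\alpha_i\) by the K\"ahler classes \(\alpha_i^{\varepsilon}:=\alpha_i+\varepsilon\{\omega\}\), choose smooth K\"ahler representatives \(\omega_i^{\varepsilon}\in\alpha_i^{\varepsilon}\), and form auxiliary currents \(T_i^{\varepsilon}\in\cE(\alpha_i^{\varepsilon})\) that remain smooth outside \(A\). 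For smooth forms Stokes' theorem trivially yields
\(\int_X\omega_1^{\varepsilon}\wedge\cdots\wedge\omega_n^{\varepsilon}=\alpha_1^{\varepsilon}\cdots\alpha_n^{\varepsilon}\),
so the task reduces to identifying the mixed non-pluripolar mass of the \(T_i\) with this cohomological intersection in the limit \(\varepsilon\to 0^{+}\).

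The hard part will be to show that \(M(T_1,\ldots,T_n):=\int_X\langle T_1\wedge\cdots\wedge T_n\rangle\) is independent of the choice of representative \(T_i\in\cE(\alpha_i)\) and depends multilinearly on the classes; neither is automatic, because the non-pluripolar product is notoriously discontinuous under weak limits. I would follow the blueprint of \cite{BEGZ}: first prove a comparison/monotonicity principle inside \(\cE(\alpha)\), reducing to pairs of potentials differing by a bounded function; then approximate by currents with analytic singularities (small unbounded locus), where classical Bedford--Taylor theory applies and the ordinary wedge product may be used on the complement of the singular set; finally pass to the limit, using that \emph{full} Monge--Amp\`ere mass is exactly the condition which prevents loss of mass at infinity along such approximations. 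Once \(M\) is multilinear and symmetric with \(M(T,\ldots,T)=\alpha^n\) on the diagonal, polarization delivers the identity \(M(T_1,\ldots,T_n)=\alpha_1\cdots\alpha_n\), which combined with the first step completes the proof.
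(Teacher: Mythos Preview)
The paper does not prove this theorem at all: it is stated as \cite[Corollary 2.15]{BEGZ} and used as a black box in the proof of \cref{prop:analysis}. There is therefore nothing in the paper to compare your argument against; you are sketching a proof of the cited BEGZ result itself, which is outside the scope of what the paper does.

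That said, your outline is broadly in the right spirit for how \cite{BEGZ} proceeds. The first equality is indeed immediate from the facts that the non-pluripolar product puts no mass on pluripolar sets and coincides with the classical product where all potentials are locally bounded. For the second equality, your polarization strategy is the correct intuition, but the actual mechanism in \cite{BEGZ} is slightly different from what you describe: rather than perturbing to K\"ahler classes and passing to a limit, one shows directly that for currents with full Monge--Amp\`ere mass the non-pluripolar product is \emph{multilinear in the potentials} (this is essentially their Theorem~1.16/Corollary~2.15), via comparison with currents having minimal singularities and the monotonicity of non-pluripolar masses. The step where you ``form auxiliary currents $T_i^{\varepsilon}\in\cE(\alpha_i^{\varepsilon})$'' is not really needed and would itself require justification. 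Once multilinearity is known, the diagonal identity $\int_X\langle T^n\rangle=\vol(\alpha)=\alpha^n$ for big and nef $\alpha$ does the rest, exactly as you say.
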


\subsection{Hermitian-Yang-Mills metric and stability}\label{sec:analysis}
 Let $(X,\omega)$ be a compact K\"ahler manifold and let $D$ be a simple normal crossing divisor on $X$. For applications of birational geometry, one usually considers more general polarization by big and nef line bundles.  In this subsection, we will prove that  a log Higgs bundle $(E,\theta)$ on $(X,D)$ is $\mu_\alpha$-polystable if $(E,\theta)|_{X-D}$ admits a  Hermitian-Yang-Mills metric whose growth at infinity is \enquote{mild}, where $\alpha$ is certain big and nef cohomology class. When $\dim\, X=1$ or  $D=\varnothing$ and the polarization is K\"ahler, this has been proved by Simpson \cite{Sim88,Sim90}. As we have seen in \cref{thm:mochizuki}, when $X$ is projective and both the first and second Chern classes of $E$  vanish and the polarization is an ample line bundle, this result has been proved by Mochizuki.
 
 We start with the following technical result, which is strongly inspired by the deep result of Guenancia \cite[Proposition 3.8]{Gue17}.
\begin{proposition}\label{prop:analysis}
	Let $(X,\omega_0)$ be a compact K\"ahler manifold and let $D$ be a simple normal crossing divisor on $X$. Let $(E,\theta)$ be a log Higgs bundle on $(X,D)$. Let $\alpha$ be a big and nef cohomology $(1,1)$-class containing a positive closed $(1,1)$-current $\omega\in \alpha$ so that $\omega|_{X-D}$ is a smooth K\"ahler form  and has at most Poincar\'e growth near $D$. Assume that there is  a hermitian metric $h$ for $(E,\theta)|_{X-D}$ which is adapted to log order (in the sense of \cref{def:log order}) and is acceptable (in the sense of \cref{def:acceptable}).  Then for any saturated  Higgs subsheaf $G\subset E$, one has
	\begin{align}\label{eq:chern}
	c_1(G)\cdot  \alpha^{n-1}=\int_{X-D-Z}Tr(\sn R_{h_G}(G))\wedge \omega^{n-1} 
	\end{align}
	where $Z$ is the analytic subvariety of codimension at least two so that $G|_{X-Z}\subset E|_{X-Z}$ is a subbundle, and $h_G$ is the metric on $G$ induced by $h$.
\end{proposition}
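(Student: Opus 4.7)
The plan is to reduce the statement to a computation on the determinant line bundle $L:=\det G$ (a genuine line bundle on $X$ via the reflexive hull, since $G$ is saturated), using that $c_1(G)=c_1(L)$ and $Tr(R_{h_G}(G))=R_{\det h_G}(L)$. Let $g:=\det h_G$, a smooth Hermitian metric on $L|_{X-D-Z}$, and fix an auxiliary smooth metric $h_0$ on $L$ over $X$; write $g=h_0\cdot e^{-\phi}$ with $\phi\in\sC^\infty(X-D-Z)$. I would first extend $\phi$ to a locally integrable function on $X$ whose associated $(1,1)$-current represents $c_1(L)$, and then invoke the pluripotential theory from \cref{sec:extension} to compute $c_1(L)\cdot\alpha^{n-1}$.

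First I would analyze the behavior of $g$ near the boundary. Using the adapted-to-log-order condition on $h$ together with the existence of local holomorphic frames for $G$ away from $Z$ (by reflexivity of $G^{**}$ and $\codim Z\ge 2$), a Cauchy-Binet-type argument comparing $\det h_G$ with the induced metric from a log-order frame of $\Lambda^{\rank G}E$ yields
\begin{align*}
|\phi(z)|\le C\log\bigl(-\sum_{i=1}^{\ell}\log|z_i|\bigr)+C'
\end{align*}
on each admissible chart meeting $D$. Next, since $Tr[\theta_G,\overline{\theta}_{G,h_G}]=0$, one has $Tr\,R_{h_G}(G)=Tr\,F_{h_G}(G)$, and the Gauss-Codazzi formula for the Higgs subbundle $G\subset E$ gives
\begin{align*}
\sqrt{-1}\,Tr\,F_{h_G}(G)=\sqrt{-1}\,Tr(\pi_G F_h(E))+(\mbox{non-negative second fundamental terms}).
\end{align*}
Combined with the acceptability bound $|F_h(E)|_{h,\omega_P}\le C$ and \cref{lem:acceptable}, this yields $\sqrt{-1}R_g(L)\ge -K\omega_P$ locally near $D$. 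Hence $\phi+K\Psi$ is plurisubharmonic for a local smooth $\Psi$ with $\sqrt{-1}\partial\bar\partial\Psi\ge\omega_P$, and \cref{lem:extension2} (whose hypotheses are verified by the sub-logarithmic bound above) extends $\phi$ to a locally integrable function $\tilde\phi$ on $X$. Then $T:=\sqrt{-1}R_{h_0}(L)+\sqrt{-1}\partial\bar\partial\tilde\phi$ is a closed $(1,1)$-current in the class $c_1(L)$ that coincides with $\sqrt{-1}R_g(L)$ on $X-D-Z$ and has zero Lelong numbers everywhere.

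Finally, by \cref{lem:Gue}, $\omega\in\cE(\alpha)$, and \cref{thm:DDL} applied to $T,\omega,\ldots,\omega$ yields
\begin{align*}
c_1(L)\cdot\alpha^{n-1}=\int_X\langle T\wedge\omega^{n-1}\rangle=\int_{X-D-Z}\sqrt{-1}R_g(L)\wedge\omega^{n-1},
\end{align*}
where the last equality uses that the non-pluripolar product coincides with the smooth wedge product on $X-D-Z$ and that $T$ has no divisorial mass. Since $R_g(L)=Tr\,R_{h_G}(G)$ and $c_1(L)=c_1(G)$, this is the identity to be proved. The hardest step will be the middle paragraph: verifying that the trivial extension of the (generally sign-indefinite) curvature $\sqrt{-1}R_g(L)$ is a \emph{closed} current in $c_1(L)$ with no divisorial contribution. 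This requires the simultaneous use of adapted-to-log-order (to control $\phi$) and of acceptability (to make $\phi$ quasi-plurisubharmonic), so that \cref{lem:extension2} applies to extend $\phi$ across $D\cup Z$.
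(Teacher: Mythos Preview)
Your reduction to the determinant line bundle $L=\det G$ and comparison with a smooth reference metric $h_0$ is the right starting point and matches the paper. There is first a sign slip: the Chern--Weil formula for a Higgs subbundle gives
\[
\sqrt{-1}\,Tr\,R_{h_G}(G)=\sqrt{-1}\,Tr\big(F_h(E)|_G\big)+\sqrt{-1}\,Tr\big(\overline{\beta}_h\wedge\beta-\varphi\wedge\overline{\varphi}_h\big),
\]
and the second fundamental form terms are \emph{non-positive} (their contraction against $\omega^{n-1}$ contributes $-|\beta|^2-|\varphi|^2$). So acceptability yields only the \emph{upper} bound $\sqrt{-1}R_g(L)\le K\omega_P$; it is $-\phi$, not $\phi$, that is quasi-psh. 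Correspondingly, near $Z$ one has $H:=h_G\cdot h_0^{-1}=|s|^2_{h\cdot h_0^{-1}}\to 0$ (where $s\in H^0(X,E\otimes G^{-1})$ defines $G\hookrightarrow E$), hence $-\phi=\log H\to-\infty$; the extended potential has \emph{positive} Lelong numbers along $Z$ in general, so your assertion that $T$ has zero Lelong numbers everywhere is not justified.

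The substantive gap is the final step. \Cref{thm:DDL} requires every class $\alpha_i$ to be big and nef and every current $T_i$ to lie in $\cE(\alpha_i)$; but $c_1(L)=c_1(G)$ is in general neither big nor nef, and your $T$ is not even a positive current, so the non-pluripolar product $\langle T\wedge\omega^{n-1}\rangle$ is undefined. The paper gets around this by reducing to showing $\int_{X^\circ}\sqrt{-1}\partial\bar\partial\log H\wedge\omega^{n-1}=0$ and adding a large multiple $C_3\tilde T$ of a Poincar\'e-type K\"ahler current, so that $\sqrt{-1}\partial\bar\partial\log H+C_3\tilde T$ becomes a positive current in the big nef class $\{C_3\tilde T\}$. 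But then one needs this current to have \emph{full Monge--Amp\`ere mass}, which by \cite[Proposition~2.3]{Gue14} amounts to a lower bound $\log H\ge C_1\varphi_P-C_2$; this fails near $Z$ precisely because $s$ vanishes there. The essential missing ingredient is the paper's passage to a log resolution $\mu:\tilde X\to X$ of the ideal sheaf of $s$, on which $\mu^*s=\tilde s\cdot\sigma_A$ with $\tilde s$ nowhere vanishing: then $\log\tilde H$ is bounded below, \cref{thm:DDL} applies, and the exceptional divisor $A$ contributes nothing since $c_1(A)\cdot\mu^*\alpha^{n-1}=0$. Without this resolution step your argument cannot be completed.
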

\begin{proof}
	By \cref{rem:Poincare}, one can construct a \emph{K\"ahler current} \begin{align}\label{eq:metric2}
	T:=\omega_0-\hess \log (-\prod_{i=1}^{\ell}\log  |\ep\cdot \sigma_i|_{{h}_i}^2),
	\end{align}  over $X$, whose restriction on $X-D$ is a complete K\"ahler form $\omega_P$, which has the same Poincar\'e growth   near $D$.   Here $\sigma_i$ is the section $H^0(X,\cO_X(D_i))$ defining $D_i$, and $h_i$ is some smooth metric for the line bundle $\cO_X(D_i)$.   Since we assume that $h$ is acceptable, (after rescaling $T$ by multiplying a constant)  one thus has
	$$
	|F_h(E)|_{h,\omega_P}\leq 1.
	$$
	By \cref{lem:acceptable}, one has
	$$
	-\vvmathbb{1}\otimes \omega_P\leq_{Nak}F_h(E)\leq_{Nak}\vvmathbb{1}\otimes \omega_P
	$$
	over $X-D$. 
	
	We first consider the case that $G$ is an invertible saturated subsheaf of $E$ which is invariant under $\theta$.  Then the metric $h$ of $E$ induces a \emph{singular hermitian metric} $h_G$ for  $G$ defined on the whole $X$, which is smooth on on  $X^\circ:=X-D-Z$. The curvature current
	$\sn R_{h_G}(G)$ is a closed $(1,1)$-current on $X-D$, which is a smooth $(1,1)$-form on  $X^\circ$. Define by $\pi:E|_{X^\circ}\to G|_{X^\circ}$ the   orthogonal projection with respect to $h$ and $\pi^\perp:E|_{X^\circ}\to G^\perp|_{X^\circ}$ the projection to its   orthogonal complement. By the Chern-Weil formula (see for example  \cite[Lemma 2.3]{Sim88}), over $X^\circ $, we have
	\begin{align}\label{eq:second}
	R_{h_G}(G)=F_{h_G}(G)=F_h(E)|_{G}+\overline{\beta}_h\wedge \beta-\varphi\wedge \overline{\varphi}_h
	\end{align}
	where $F_h(E)|_{G}$ is the orthogonal projection of $F_h(E)$ on $\hom(G,G)|_{X^\circ}=\cO_{X^\circ}$, and $\beta\in \sA^{1,0}(X^\circ,\hom(G,G^\perp))$ is the second fundamental form, and $\varphi\in\sA^{1,0}(X^\circ,\hom(G^\perp,G))$ is equal to $ \theta|_{G^\perp}$. Hence $\sn R_{h_G}(G)\leq \sn F_h(E)|_{G}$.

	
	For any local frame $e$ of $G|_{X^\circ}$,  note that
	$$
	|e|_{h}^2\cdot \sn F_{h}(E)|_{G}=\langle \sn F_{h}(E)(e), e\rangle_{h}\leq \langle \vvmathbb{1}\otimes \omega_P e, e\rangle_{h}=|e|_{h}^2\cdot \omega_P
	$$
	Hence $\sn F_{h}(E)|_{G}-\omega_P$ is a semi-negative $(1,1)$-form on $X^\circ$, and thus   over $X^\circ$ one has
\begin{align*} 
	- \sn R_{h_G}(G)+T\geq \omega_P -\sn F_{h}(E)|_{G} \geq 0 
\end{align*}
	Since we assume that $(E,h)$ is adapted to log order,   $(G^{-1}|_{X-Z},h^{-1}_{G}|_{X-Z})$ is thus  adapted to log order for the log pair $(X-Z,D-Z)$.     By \cref{lem:current} and \eqref{eq:metric2}, $- \sn R_{h_G}(G)+T$ extends to a closed positive $(1,1)$-current on $X-Z$.  
	Since $Z$ is of codimension at least two, 
	a  standard fact in pluripotential theory (see \cite[Theorem 3.3.42]{NO90}) shows that   $- \sn R_{h_G}(G)+T$  extends to a  positive closed $(1,1)$-current  on the whole $X$.

Denote by $s\in H^0(X,E\otimes G^{-1})$ the section defining the inclusion $G\to E$. 	We fix a smooth hermitian metric $h_0$ for $G$ and we define a function $H:=|s|^2_{h\cdot h_0^{-1}}=h_G\cdot h_0^{-1}$ on $X-D$. Then
\begin{align}\label{eq:difference}
	\hess \log H=\sn R_{h_0}(G)-\sn R_{h_G}(G). 
	\end{align}
Hence there is a constant $C_0>0$ so that
	\begin{align}\label{eq:difference2}
	\hess \log H+C_0T\geq T. 
	\end{align}
	By \cref{lem:Gue}, $\omega\in \cE(\alpha)$. Since $\sn R_{h_0}(G)$ is a smooth $(1,1)$-form on $X$, it follows from \cref{thm:DDL} that
	$$
	\int_{X^\circ}\sn R_{h_0}(G)\wedge \omega^{n-1}=c_1(G)\cdot \alpha^{n-1}.
	$$
	To prove \eqref{eq:chern},	by \eqref{eq:difference} and the above equality it suffices to prove that 
	\begin{align}\label{eq:suffice}
	\int_{X^\circ}\hess \log H\wedge \omega^{n-1}=0.
	\end{align} 
	We will pursue the ideas in \cite[Proposition 3.8]{Gue17} to prove this equality.
	
Let us take a log resolution $\mu:\tilde{X}\to X$ of the ideal sheaf $\sI$ defined by $s\in H^0(X,E\otimes G^{-1})$, with $\cO_{\tilde{X}}(-A)=\mu^*\sI$ and $\tilde{D}:=\mu^{-1}(D)$ a   simple normal crossing divisor.  Let us denote by $(\tilde{E},\tilde{\theta})$ the induced log Higgs bundle on $(\tilde{X},\tilde{D})$ by pulling back $(E,\theta)$ via $\mu$. Then the metric $\tilde{h}:=\mu^*h$ for $(\tilde{E},\tilde{\theta})|_{\tilde{X}-\tilde{D}}$ is also adapted to log order and acceptable by \cref{lem:birational}.  
	
	Note that ${\rm Supp}(\cO_X/\sI)=Z$. Write $\tilde{G}:=\mu^*G$. There is a nowhere vanishing section
	$$
	\tilde{s}\in H^0(\tilde{X},\tilde{E}\otimes \tilde{G}^{-1}\otimes \cO_{\tilde{X}}(-A))
	$$
	so that $\mu^*s=\tilde{s}\cdot \sigma_{A}$, where $\sigma_A$ is the canonical section in $H^0(\tilde{X},\cO_{\tilde{X}}(A))$ which defines the effective exceptional divisor $A$.
	
	Fix a K\"ahler form $\tilde{\omega}$ on $\tilde{X}$, as \cref{rem:Poincare} we construct another  {K\"ahler current} \begin{align}\label{eq:metric3}
	\tilde{T}:=\tilde{\omega}-\hess \log (-\prod_{i=1}^{m}\log  |\ep\cdot \tilde{\sigma}_i|_{\tilde{h}_i}^2),
	\end{align}  over $\tilde{X}$, whose restriction on $\tilde{X}-\tilde{D}$ is a complete K\"ahler form,  which has the same Poincar\'e growth   near $\tilde{D}$. Here $\tilde{\sigma}_i$ is the section $H^0(X,\cO_X(\tilde{D}_i))$ defining $\tilde{D}_i$, and $\tilde{h}_i$ is some smooth metric for the line bundle $\cO_{\tilde{X}}(\tilde{D}_i)$.

	Let us fix a smooth hermitian metric $h_A$ for $\cO_{\tilde{X}}(A)$. 	Write $\tilde{H}:=|\tilde{s}|_{\tilde{h}\cdot \mu^*h_0^{-1}\cdot h_A^{-1}}^2$. Since $\tilde{h}$ is adapted to log order and $\tilde{s}$ is nowhere vanishing, there is a constant $C_1, C_2>0$ so that
	\begin{align}\label{eq:potential}
	\log \tilde{H}\geq C_1\varphi_P-C_2,
	\end{align}
	where we denote by $$\varphi_P:=-\log (-\prod_{i=1}^{\ell}\log  |\ep\cdot \tilde{\sigma}_i|_{\tilde{h}_i}^2).$$  
	Since $\tilde{h}:=\mu^*h$ for $(\tilde{E},\tilde{\theta})|_{\tilde{X}-\tilde{D}}$ is  acceptable, by  same arguments as those for \eqref{eq:difference2}, one can show that
	$$
	\hess \log \tilde{H}+C_3\tilde{T}\geq \tilde{T}
	$$
	over $\tilde{X}-\tilde{D}$ for some constant $C_3 >0$. Note that  the local potential of $	\hess \log \tilde{H}+C_3\tilde{T}$ is bounded from below by $(C_1+C_3)\varphi_P$ according to \eqref{eq:potential}. By \cite[Proposition 2.3]{Gue14}, one has 
	$$
		\hess \log \tilde{H}+C_3\tilde{T}\in \cE(\{C_3\tilde{T}\}).
	$$	
One can check that $\mu^*\omega\leq C_4\tilde{T}$ for some constant $C_4>0$. By \cref{lem:Gue} again, $\mu^*\omega\in \cE(\mu^*\alpha)$.
   Hence by \cref{thm:DDL} one has 
	$$
	\int_{\mu^{-1}(X^\circ)}(\hess \log \tilde{H}+C_3\tilde{T})\wedge \mu^*\omega^{n-1}=\{C_3\tilde{T}\}\cdot \mu^*\alpha^{n-1}.
	$$
	Recall that $\tilde{T}\in \cE(\{\tilde{T}\})$ by \cref{lem:Gue}.
Hence
	$$
	\int_{\mu^{-1}(X^\circ)} C_3\tilde{T}\wedge \mu^*\omega^{n-1}=\{C_3\tilde{T}\}\cdot \mu^*\alpha^{n-1}.
	$$
One thus has
\begin{align}\label{eq:1}
	\int_{\mu^{-1}(X^\circ)} \hess \log \tilde{H} \wedge \mu^*\omega^{n-1}=0.
\end{align}
	Note that over $\tilde{X}-\tilde{D}$, one has
	$$\hess \log \tilde{H}+[A]-\sn R_{h_A}(A)=\mu^*\hess \log H$$
	where $[A]$ is the current of integration of $A$. Hence over $\mu^{-1}(X^\circ)\simeq X^\circ$, one has
\begin{align}\label{eq:2}\hess \log \tilde{H}-\sn R_{h_A}(A)=\mu^*\hess \log H.
\end{align}
By \cref{thm:DDL} again,
\begin{align}\label{eq:3}
	\int_{\mu^{-1}(X^\circ)} \sn R_{h_A}(A) \wedge \mu^*\omega^{n-1}=c_1(A)\cdot \mu^*\alpha^{n-1}=0,
\end{align}
where the last equality follows from the fact that $A$ is $\mu$-exceptional. \eqref{eq:1}, \eqref{eq:2} together with \eqref{eq:3} shows the desired equality \eqref{eq:suffice}.   We finish the proof of \eqref{eq:chern} when $\rank\, G=1$.

	Assume that $\rank\, G=r$. We replace $(E,\theta,h)$ by  the wedge product $(\tilde{E},\tilde{\theta},\tilde{h}):=\Lambda^r(E,\theta,h)$. By \cref{lem:tensor}, the induced metric $\tilde{h}$ is also acceptable and one can easily check that it is also adapted to log order. Note that $\det G$ is also invariant under $\tilde{\theta}$, and that
 $
	\det G\to \Lambda^rE$. 
	We then reduce the general cases to rank $1$ cases. The proposition is thus proved.
\end{proof}

Let us state and prove the main result in this section.
\begin{thm}\label{thm:criteria}
	Let $X$ be a compact K\"ahler manifold and let $D$ be a simple normal crossing divisor on $X$. Let $\alpha$ be a big and nef cohomology $(1,1)$-class containing a positive closed $(1,1)$-current $\omega\in \alpha$ so that $\omega|_{X-D}$ is a smooth K\"ahler form  and has at most Poincar\'e growth near $D$.  Let $(E,\theta)$ be a log Higgs bundle on $(X,D)$.   Assume that there is  a hermitian metric $h$ on $(E,\theta)|_{X-D}$ such that
	\begin{itemize}[leftmargin=0.4cm]
		\item it is  adapted to log order (in the sense of \cref{def:log order});
		\item it is acceptable (in the sense of \cref{def:acceptable});
		\item it is \emph{Hermitian-Yang-Mills}:
		$$
		\Lambda_{\omega}F_h(E)^{\perp}=0.
		$$
	\end{itemize} 
	Then $(E,\theta)$ is $\mu_\alpha$-polystable.
\end{thm}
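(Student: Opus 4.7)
The plan is to combine \cref{prop:analysis} with a Chern--Weil slope comparison of Kobayashi--Simpson type: convert the degree of any saturated Higgs subsheaf into a curvature integral on the open part $X-D-Z$, apply the second-fundamental-form formula \eqref{eq:second}, and read off the slope inequality from the Hermitian--Yang--Mills hypothesis together with the pointwise non-positivity of the second-fundamental-form and Higgs-correction terms.

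First I would verify $\mu_\alpha$-semistability. Let $G\subset E$ be a saturated Higgs subsheaf of rank $s$, and let $Z\subset X$ be the codimension-$\geq 2$ subset off which $G$ is a subbundle; set $X^\circ := X-D-Z$. The HYM hypothesis $\Lambda_\omega F_h(E)^\perp=0$ says $\Lambda_\omega F_h(E)=\mu\,\vvmathbb{1}_E$ for the scalar function $\mu := \tfrac{1}{r}\Lambda_\omega\mathrm{Tr}\,F_h(E)$ on $X-D$, so $\Lambda_\omega\mathrm{Tr}(F_h(E)|_G) = s\mu = \tfrac{s}{r}\Lambda_\omega\mathrm{Tr}\,F_h(E)$. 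Applying \cref{prop:analysis} to $G$ and invoking \eqref{eq:second} gives
\begin{equation*}
c_1(G)\cdot\alpha^{n-1} \;=\; \int_{X^\circ}\sn\,\mathrm{Tr}\!\bigl(F_h(E)|_G + \overline{\beta}_h\wedge\beta - \varphi\wedge\overline{\varphi}_h\bigr)\wedge\omega^{n-1};
\end{equation*}
the first summand integrates to $\tfrac{s}{r}\!\int_{X-D}\!\sn\,\mathrm{Tr}\,F_h(E)\wedge\omega^{n-1} = \tfrac{s}{r}\,c_1(E)\cdot\alpha^{n-1}$ (by \cref{prop:analysis} applied to $E$ itself, using the $\Lambda_\omega$-contraction identity), while the two remaining summands are pointwise non-positive since $\Lambda_\omega\sn\,\mathrm{Tr}(\overline{\beta}_h\wedge\beta)=-|\beta|^2_{h,\omega}$ and $-\Lambda_\omega\sn\,\mathrm{Tr}(\varphi\wedge\overline{\varphi}_h)=-|\varphi|^2_{h,\omega}$. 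Dividing by $s$ yields $\mu_\alpha(G)\leq\mu_\alpha(E)$.

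To upgrade semistability to polystability I would analyze the equality case. If $\mu_\alpha(G)=\mu_\alpha(E)$ for a proper saturated Higgs subsheaf $G$, then both non-positive integrals must vanish, forcing $\beta\equiv 0$ and $\varphi\equiv 0$ on $X^\circ$. The vanishing $\beta=0$ makes the smooth orthogonal projection $\pi\colon E|_{X^\circ}\to G|_{X^\circ}$ holomorphic; since $\pi$ has operator norm $\leq 1$ in $h$ and $h$ is adapted to log order, the matrix entries of $\pi$ in any holomorphic frame of $E$ grow at most polylogarithmically along $D$, so $\pi$ extends holomorphically across $D\setminus Z$, and then across the codimension-$\geq 2$ set $Z$ by reflexivity, producing a global idempotent $\pi\in\mathrm{End}(E)$ with image $G$ and kernel $G'$. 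The vanishing $\varphi=0$ forces $\theta(G')\subset G'\otimes\Omega^1_X(\log D)$, yielding a splitting $(E,\theta)=(G,\theta|_G)\oplus(G',\theta|_{G'})$ of log Higgs bundles on $X$. The triple $(G,\theta|_G,h|_G)$ inherits adaptedness to log order and acceptability, and $\beta=\varphi=0$ gives $\Lambda_\omega F_{h|_G}(G)=\mu\,\vvmathbb{1}_G$ (and similarly for $G'$), so all three running hypotheses descend; induction on the rank then expresses $(E,\theta)$ as a direct sum of $\mu_\alpha$-stable log Higgs summands of the common slope $\mu_\alpha(E)$.

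The main obstacle I anticipate is the extension step above: propagating the holomorphic splitting of $E|_{X^\circ}$ across the codimension-one divisor $D$. This is precisely where the \emph{acceptability} and \emph{adaptedness to log order} hypotheses are used in an essential way, since together they bound the hermitian projection $\pi$ in any holomorphic frame of $E$ by a polylogarithmic function along $D$, which places the entries of $\pi$ in $L^2_{\mathrm{loc}}$ and permits holomorphic extension by the standard removable-singularity theorem for holomorphic functions of log-polynomial growth. The remaining pieces (the Chern--Weil identity, reflexive-sheaf extension across $Z$, and the rank induction) are routine; the convergence of the curvature integrals above is guaranteed by acceptability together with the finite $\omega$-volume $\int_{X-D}\omega^n=\alpha^n<\infty$.
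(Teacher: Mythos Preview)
Your proof is correct and follows the same overall strategy as the paper: apply \cref{prop:analysis} together with the Chern--Weil formula \eqref{eq:second} to obtain the slope inequality, then in the equality case deduce $\beta\equiv 0$ and $\varphi\equiv 0$, extend the resulting holomorphic splitting across $D$ using adaptedness to log order, and finish by induction on the rank.

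The one substantive difference lies in how you establish that $G$ is a \emph{subbundle} in the equality case. The paper first reduces to $\rank G=1$ by passing to $\det G\hookrightarrow\Lambda^m E$, and then rules out non--locally-free points of $E/G$ by a curvature contradiction: from $\beta=\varphi=0$ one gets $\sn R_{h_G}(G)=\sn F_h(E)|_G\geq -T$ over $X^\circ$, which forces $\log|e|_h^2$ to be bounded below by a polylog function near $D$, incompatible with the vanishing of the inclusion section at a point of $Z$. You instead extend the idempotent $\pi$ as a section of the locally free sheaf $\End(E)$ directly---first across $D\setminus Z$ via the operator-norm bound $|\pi|_h\leq 1$ and log-order adaptedness, then across the codimension-$\geq 2$ set $Z$ by Hartogs---and read off $G=\mathrm{Im}(\pi)$ as a subbundle a posteriori. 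Your route is a bit more streamlined and avoids the rank-one reduction; the paper's route makes the role of the acceptability hypothesis (through the curvature bound) more explicit at this step. Both arguments are valid.
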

\begin{proof}
	We shall use the same notations as those in \cref{prop:analysis}. 
	Let $G$ be any saturated  Higgs-subsheaf $G\subset E$, 	and denote by $Z$ the analytic subvariety of codimension at least two so that $G|_{X-Z}\subset E|_{X-Z}$ is a subbundle. By the Chern-Weil formula again, over $X^\circ:=X-Z-D$  we have
	\begin{align*} 
\Lambda_{\omega}F_{h_G}(G) 
 =\frac{\Lambda_{\omega} Tr (F_h(E))}{\rank\, E}\otimes \vvmathbb{1}_{G} +\Lambda_{\omega}(\overline{\beta}_h\wedge \beta-\varphi\wedge \overline{\varphi}_h).
	\end{align*}
where $\beta\in \sA^{1,0}(X^\circ,\hom(G,G^\perp))$ is the second fundamental form of $G$ in $E$ with respect to the metric $h$, and $\varphi\in\sA^{1,0}(X^\circ,\hom(G^\perp,G))$ is equal to $ \theta|_{G^\perp}$. 
	
	Hence
	\begin{align*}
 \int_{X^\circ} Tr (\sn F_{h_G}(G))\wedge  \omega^{n-1} =\int_{X^\circ}\frac{\rank\, G }{\rank\, E} Tr (\sn F_h(E))  \wedge  \omega^{n-1} -(|\beta|_h^2+|\varphi|_h^2)  \frac{\omega^{n}}{n}.
	\end{align*}
	By \cref{prop:analysis} together with the above inequality, one concludes the slope inequality
	$$
	\mu_\alpha(G)\leq\mu_\alpha(E)
	$$
	and the equality holds if and only if $\beta\equiv 0$ and $\varphi\equiv 0$. We shall prove that if the above slope equality holds, $G$ is a \emph{sub-Higgs bundle} of $E$, and we have the decomposition
	$$
	(E,\theta)=(G,\theta|_{G})\oplus (F,\theta_F)
	$$
	where $(F,\theta_F)$ is another sub-Higgs bundle of $E$.

 Set $\rank\, E=r$ and $\rank\, G=m$. We first prove that $G$ is a subbundle of $E$. It is equivalent to show that $\det G\to \Lambda^{r}E$ is a subbundle, and we thus reduce the problem to the case that $\rank\, G=1$. Assume that 	 $
 \mu_\alpha(G)=\mu_\alpha(E)
 $ 
 and thus  $\beta\equiv 0$ and $\varphi\equiv 0$. By \eqref{eq:second}, over $X^\circ$ one has
\begin{align} \label{eq:converse}
\sn R_{h_G}(G)=\sn F_h(E)|_{G} \geq -T|_{X^\circ},
\end{align} 
where $T$ is the K\"ahler current defined in \eqref{eq:metric2}. By \cref{lem:current}, $\sn R_{h_G}(G)+T$ extends to a closed positive $(1,1)$-current on   $X-Z$, and thus to  the whole $X$. 

 Assume now $x_0\in X$ is a point where $(E/G)_{x_0}$ is not locally free. Take a local holomorphic frame $e$ of $G$ on some open neighborhood $(U;z_1,\ldots,z_n)$ of $x$, and a holomorphic frame $e_1,\ldots,e_r$ of $E$. Then $e=\sum_{i=1}^{r}f_i(x)e_i$, where $f_i\in \cO(U_i)$ so that $f_1(x_0)=\cdots=f_r(x_0)=0$. By the asssumption that  $h$ is adapted to log order, one concludes that
\begin{align} \label{eq:contra}
\log |e|_h^2\leq C_1\log (|z_1|^2+\cdots+|z_n|^2)+C_2\log(-\log (\prod_{i=1}^{\ell}|z|_i^2)) 
\end{align} 
for some positive constants $C_1$ and $C_2$. On the other hand, by \eqref{eq:converse} on $U$ we have
$$
 \hess \log |e|_h^2=-\sn R_{h_G}(G)\leq T.  
$$
By the construction of $T$, we conclude that
$$
\log |e|_h^2\geq  -C_3\log(-\log (\prod_{i=1}^{\ell}|z|_i^2))-C_4,
$$
for some positive constants $C_3$ and $C_4$. This contradicts with \eqref{eq:contra}. Hence we conclude that when the slope equality holds, $G$ is a   subbundle of $E$.

We now find the desired decomposition of $(E,\theta)$. By the above argument, when the slope equality holds, $(G,\theta|_G)$ is a Higgs subbundle of $(E,\theta)$ (not assumed to be rank 1 now), and  $\beta\equiv 0$ and $\varphi\equiv 0$.  This means that the  orthogonal projection $\pi: E |_{X-D}\to  G |_{X-D}$ is holomorphic, that $G^\perp$ is a holomorphic subbundle of $E|_{X-D}$, and that \begin{align}\label{eq:splits}
(E,\theta)|_{X-D}=(G,\theta|_{G})|_{X-D}\oplus (G^\perp,\theta|_{G^\perp}).
\end{align}
 We shall prove that $\pi$ extends to a morphism $\tilde{\pi}:E\to G$ so that $\pi\circ \iota=\vvmathbb{1}$.  For any point $x_0\in D$, we pick an admissible coordinate $(U;z_1,\ldots,z_n)$ centered at $x_0$ and a holomorphic fame $(e_1,\ldots,e_r)$ for $E|_U$ adapted to log order so that $(e_1,\ldots,e_m)$ is  a holomorphic fame  for $G|_U$. Write $\pi(e_j|_{X-D})=\sum_{i=1}^{r}f_{i}(x)e_i$, where $f_i(x)\in \cO(U-D)$. For $j=1,\ldots,m$, one has $\pi(e_j|_{X-D})=e_j$ and it extends naturally.  For $j>m$ and some $1<r<1$,  over $U^*(r)$ one has
$$
C(-\log (\prod_{i=1}^{\ell}|z|_i^2))^M\geq   |e_j|_h^2\geq |\pi(e_j)|_h^2\geq   C^{-1}(-\log (\prod_{i=1}^{\ell}|z|_i^2))^{-M}\sum_{i=1}^{r}|f_i|^2
$$
for some $C,M>0$, where   the second inequality is due to the fact  that $\pi$  is the orthogonal projection with respect to $h$, and  the last inequality follows from the fact that $h$ is adapted to log order.   Hence each $|f_i|$ must be locally bounded from above on $U$, and it thus extends to a holomorphic function on $U$. We conclude that $\pi$ extends to a morphism $\tilde{\pi}:E\to G$, whose rank is constant and $\tilde{\pi}\circ\iota=\vvmathbb{1}$, where $\iota:G\to E$ denotes the inclusion. Let us define by $F:=\ker \tilde{\pi}$, which is a subbundle of $E$ so that $E=G\oplus F$.   Note that $F|_{X-D}=G^\perp$.  By \eqref{eq:splits} together with the continuity propery we conclude that $F$ is a sub-Higgs bundle of $(E,\theta)$, and that $(E,\theta)=(G,\theta|_G)\oplus (F,\theta|_F)$. Since $h|_{G}$ (resp. $h|_{F}$) is a Hermitian-Yang-Mills metric for $(G,\theta|_G)$ (resp.  $(F,\theta|_F)$) satisfying the three conditions in the theorem, we can argue in the same way as above   to decompose   $(G,\theta|_G)$ and  $(F,\theta|_F)$ further to show that $(E,\theta)$ is a direct sum of  $\mu_\alpha$-stable log Higgs bundles with the same slope. Hence  $(E,\theta)$ is  $\mu_\alpha$-polystable. We prove  the theorem.
\end{proof}

 \subsection{Application to toroidal compactification of ball quotient}\label{sec:toroidal}
Let $\Gamma\in PU(n,1)$ be a torsion free lattice, and let $\faktor{\mathbb{B}^n}{\Gamma}$  be the associated  ball quotient. By the work of Baily-Borel, Siu-Yau and Mok \cite{Mok12},   $\faktor{\mathbb{B}^n}{\Gamma}$ has a unique structure of
a quasi-projective complex algebraic variety (see for example \cite[Theorem 3.1.12]{BU20}).  When the parabolic subgroups of $\Gamma$ are unipotent, by the work of Ash et al.  \cite{AMR10} and Mok \cite[Theorem 1]{Mok12}, $\faktor{\mathbb{B}^n}{\Gamma}$  admits a \emph{unique} smooth toroidal compactification, which we denote by $X$. Let us denote by $D:=X-\faktor{\mathbb{B}^n}{\Gamma}$  the boundary divisor, which is a disjoint union of abelian varieties. Let $g_B$ be the Bergman metric for $\mathbb{B}^n$, which is complete, invariant under $PU(n,1)$ and has constant holomorphic sectional curvature $-1$. Hence it descends to a metric $\omega$ on $X-D$.  If we consider $\omega$ as a metric for $T_{X}(-\log D)|_{X-D}$, by \cite[Proposition 2.1]{To93} it is \emph{good} in the sense of Mumford \cite[Section 1]{Mum77}.  Therefore, for any $k\geq 1$, it follows from \cite[Theorem 1.4]{Mum77} that the trivial extension of the  Chern form $c_k(T_{X-D},\omega)$ onto $X$ defines a $(k,k)$-current $[c_k(T_{X-D},\omega)]$ on $X$, which represents the cohomology class $c_k(T_X(-\log D))\in H^{k,k}(X)$.     Let us first prove \eqref{eq:equality}, which is indeed an easy computation.

For any $x_0\in X-D$, we take a normal coordinate system $(z_1,\ldots,z_n)$ centered at $x_0$ so that
$$
\omega=\sn \sum_{1\leq \ell,m\leq n}\delta_{\ell m}dz_\ell\wedge d\bar{z}_m-\sum_{j,k,\ell,m}c_{jk\ell m} z_j \bar{z}_k+O(|z|^3)
$$
where $c_{jk\ell m}$ is the coefficients of the Chern curvature tensor
$$
R_{\omega}(T_X)=\sum_{j,k,\ell,m}c_{jk\ell m}dz_j\wedge d\bar{z}_k\otimes (\frac{\d}{\d z_\ell})^*\otimes \frac{\d}{\d z_m}.
$$
By \cite[p. 177]{Mok89}, one has
\begin{align}\label{eq:ball}
c_{jk\ell m}(x_0)=-(\delta_{jk}\delta_{\ell m}+\delta_{jm}\delta_{k\ell}).
\end{align}
One can check that
$$
nc_1(T_{X-D},\omega)^2-2(n+1)c_2(T_{X-D},\omega)\equiv 0.
$$
We thus conclude that  the Chern classes   $c_k(\Omega^1_X(\log D))$ satisfies
$$
nc_1(\Omega_X^1(\log D))^2-2(n+1)c_2(\Omega_X^1(\log D))=0.
$$
Hence \eqref{eq:equality} in \Cref{main2} holds.

For the log Hodge bundle $(E,\theta)=(E^{1,0}\oplus E^{0,1},\theta)$, given by
$$
E^{1,0}:=\Omega^1_X(\log D), \quad E^{0,1}:=\cO_X
$$ 
with the Higgs field $\theta$ defined in \eqref{eq:Higgs},  we shall prove that it is $\mu_\alpha$-polystable for the big and nef polarization $\alpha$  in \cref{thm:criteria}. We equipped $(E^{1,0}\oplus E^{0,1})|_{X-D}$ with the metric
 \begin{align}\label{eq:metric harmonic}
h:=\omega^{-1}\oplus h_c
\end{align} 
where $h_c$ is the canonical metric on $\cO_{X-D}$ so that $|1|_{h_c}=1$.  Recall that the curvature $F_h(E)$ of the connection $D_h:=d_h+\theta+\overline{\theta}_h$ is
$$
F_h(E)=R_h(E)+[\theta,\overline{\theta}_h],
$$
where $R_h(E)$ is the Chern curvature of $(E,h)$. An easy exercise shows that
$$
\sn F_h(E)=\omega\otimes \vvmathbb{1}.
$$ 
In particular, $h$  is a  Hermitian-Yang-Mills metric for $(E,\theta)|_{X-D}$. We shall show that it satisfies the three conditions in \cref{thm:criteria}. Indeed, we only have to check the first two conditions since $\sn F_h(E)^\perp\equiv 0$.

We first note that $\omega$ has at most Poincar\'e growth near $D$ in the sense of \cref{def:Poincare}. Indeed, this follows easily from the Ahlfors-Schwarz lemma (see for example \cite[Lemma 2.1]{Nad89}) since the holomorphic sectional curvature of $\omega$ is $-1$. Hence for any admissible coordinate system $(U;z_1,\ldots,z_n)$ as in \cref{def:admissible},  one has $|F_h(E)|_{h,\omega_P}\leq C$, where $\omega_P$ is the Poincar\'e metric on $U^*$. 


By the following result, we see that $h$ is adapted to log order.
\begin{lem}[\protecting{\cite[eq. (8) on  p. 338]{Mok12}}]\label{lem:To}
	Let $(X,D)$ be as above. Then for any $x\in D$, there is an admissible coordinate $(U;z_1,\ldots,z_n)$ at $x$ so that  the frame $z_1\frac{\d}{\d z_1},\frac{\d}{\d z_2},\ldots,\frac{\d}{\d z_{n-1}}, \frac{\d}{\d z_n}$ is adapted to  log order (in the sense of \cref{sec:adapt}) with respect to the above metric $\omega$.
\end{lem}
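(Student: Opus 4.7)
The plan is to follow Mok's approach in \cite{Mok12} and derive the estimate from an explicit computation of the Bergman metric in Siegel-domain coordinates adapted to a cusp of $\Gamma$. First I would fix a cusp of $\Gamma$ lying above the point $x\in D$ and conjugate so that it sits at infinity in the Siegel realization
\[
S = \{(u, v) \in \mathbb{C} \times \mathbb{C}^{n-1} : \operatorname{Im}(u) - |v|^2 > 0\} \simeq \mathbb{B}^n,
\]
on which the Bergman metric is $\omega_B = -(n+1)\sqrt{-1}\,\partial\bar\partial \log \rho$ with $\rho(u,v) := \operatorname{Im}(u) - |v|^2$.

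Next I would describe the stabilizer $\Gamma_\infty$ of the cusp. Since all parabolic elements of $\Gamma$ are unipotent by hypothesis, a finite-index subgroup of $\Gamma_\infty$ is generated by Heisenberg translations of the form
\[
(u, v) \mapsto (u + 2i\, \bar a \cdot v + i|a|^2 + t,\; v + a), \qquad a \in L,\ t \in \tau\mathbb{Z},
\]
where $L \subset \mathbb{C}^{n-1}$ is a lattice and $\tau > 0$. Quotienting first by the central $\mathbb{Z}$ of real translations in $u$ via $z_1 := \exp(2\pi i u/\tau)$ turns a neighborhood of the cusp into a punctured disk bundle over $\mathbb{C}^{n-1}/L$; local coordinates $z_2,\dots,z_n$ on (a small open set of) the boundary Abelian variety $\mathbb{C}^{n-1}/L$ then complete an admissible coordinate system with $D\cap U = \{z_1 = 0\}$.

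The core step is a direct computation of the matrix $H(\omega,\mathbf v) = \bigl(\omega(\mathbf v_i,\overline{\mathbf v_j})\bigr)$ in the frame $\mathbf v_1 := z_1\partial_{z_1}$, $\mathbf v_k := \partial_{z_k}$ for $k\ge 2$. Using $\operatorname{Im}(u) = -(\tau/2\pi)\log|z_1|$ and differentiating $\log\rho$, one obtains the asymptotic
\[
H(\omega,\mathbf v) \sim \begin{pmatrix} c_1(-\log|z_1|)^{-2} & \ast \\ \ast & c_2(-\log|z_1|)^{-1} \cdot I_{n-1} \end{pmatrix},
\]
with $c_1,c_2>0$ and with $\ast$ entries of strictly smaller log-order than the geometric means of the diagonal terms. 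This is exactly the content of equation (8) on p.\,338 of \cite{Mok12}. Consequently both $H(\omega,\mathbf v)$ and its inverse have operator norm bounded by a constant times $(-\log|z_1|)^{2}$, which is precisely the inequality demanded by \cref{def:log order} with $M=2$.

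The only genuine technical point is the control of the off-diagonal entries, which come from two sources: the coupling $|v|^2$ in $\rho$ and the $v$-dependent shift of $u$ built into the Heisenberg action. In the toroidal coordinates $(z_1,\ldots,z_n)$ both produce contributions that decay faster in $(-\log|z_1|)^{-1}$ than the dominant diagonal terms, so they cannot spoil the two-sided log bound. The remaining verification is a routine Taylor expansion of $\partial\bar\partial\log\rho$, and the statement can be extracted verbatim from \cite[eq.~(8), p.\,338]{Mok12}.
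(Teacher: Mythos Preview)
The paper does not supply its own proof of this lemma; it is simply quoted as \cite[eq.~(8) on p.~338]{Mok12} and used as a black box. Your proposal correctly reconstructs the computation behind that citation: the Siegel-domain model, the description of the unipotent stabilizer, the change of variable $z_1=\exp(2\pi i u/\tau)$, and the resulting asymptotic for $H(\omega,\mathbf v)$ all match Mok's derivation, and the conclusion that both $H$ and $H^{-1}$ are bounded by $C(-\log|z_1|)^2$ is exactly the adaptedness to log order required by \cref{def:log order}. So your approach is not different from the paper's---you are simply supplying the content of the reference the paper invokes.
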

Therefore,  the metric $h$ for $(E,\theta)|_{X-D}$ satisfies the three conditions in \cref{thm:criteria}.  In conclusion, $(E,\theta)$ is  $\mu_{\alpha}$-polystable for the big and nef class $\alpha$ in \cref{thm:criteria}

To finish the proof of   \cref{main2}, we have to show that $c_1(K_{X}+D)$ can be made as a polarization in \cref{thm:criteria}, which follows from the following result.
\begin{lem}[\protecting{\cite[Proposition 1]{Mok12}}]\label{lem:final}
The K\"ahler form $\frac{(n+1)}{2\pi}\omega$ on $X-D$ defined above extends to a closed positive $(1,1)$-current  $\varpi\in   c_1(K_X+D)$ with zero Lelong numbers. In particular, $K_X+D$ is big and nef.
\end{lem}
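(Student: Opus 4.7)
\medskip

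\noindent\textbf{Proof proposal for \cref{lem:final}.}
The starting observation is that the Bergman metric $\omega$ on $X-D$ satisfies the K\"ahler--Einstein equation $\mathrm{Ric}(\omega)=-(n+1)\omega$, since its holomorphic sectional curvature is the constant $-1$. Equivalently, the induced Hermitian metric $h_0:=(\det\omega)^{-1}$ on $(K_X+D)|_{X-D}$ has Chern curvature form
\begin{equation*}
\tfrac{\sn}{2\pi}\,R_{h_0}(K_X+D)\;=\;\tfrac{n+1}{2\pi}\,\omega,
\end{equation*}
so the $(1,1)$-form appearing in the statement is exactly the first Chern form of $h_0$. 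This is the same identity that was already invoked in Step~3 of the proof of \cref{thm:equality}. Hence it suffices to extend $h_0$ to a singular Hermitian metric on the line bundle $K_X+D$ over all of $X$ with zero Lelong numbers, and to verify bigness.

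The extension is produced by \cref{lem:current}, whose hypotheses we now check. First, the curvature form of $h_0$ is a positive $(1,1)$-form on $X-D$. Second, $h_0$ is adapted to log order in the sense of \cref{def:log order}: \cref{lem:To} supplies, at every point of $D$, an admissible coordinate $(U;z_1,\ldots,z_n)$ for which the frame $z_1\partial_{z_1},\partial_{z_2},\ldots,\partial_{z_n}$ of $T_X(-\log D)|_U$ is adapted to log order with respect to $\omega$. Taking the top exterior power and dualising, the local holomorphic frame $\frac{dz_1}{z_1}\wedge dz_2\wedge\cdots\wedge dz_n$ of $(K_X+D)|_U$ is adapted to log order with respect to $h_0$. \cref{lem:current} then produces a singular Hermitian metric $\tilde h_0$ on $K_X+D$ with zero Lelong numbers whose curvature current $\varpi:=\tfrac{\sn}{2\pi}R_{\tilde h_0}(K_X+D)$ is a closed positive $(1,1)$-current in $c_1(K_X+D)$, satisfying $\varpi|_{X-D}=\tfrac{n+1}{2\pi}\omega$ by construction. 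The same lemma yields that $K_X+D$ is nef.

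For bigness we use the discussion preceding \cref{lem:final}: viewed as a metric on $T_X(-\log D)$, the Bergman metric $\omega$ is good in the sense of Mumford by \cite[Proposition 2.1]{To93}, and therefore so is $h_0$ on the determinant line bundle $K_X+D$. By \cite[Theorem 1.4]{Mum77} (already quoted in the paper), the trivial extension of the Chern form $\tfrac{n+1}{2\pi}\omega$ represents $c_1(K_X+D)$, and the corresponding identity for the top self-intersection gives
\begin{equation*}
c_1(K_X+D)^n \;=\; \Big(\tfrac{n+1}{2\pi}\Big)^{\!n}\int_{X-D}\omega^n,
\end{equation*}
which is strictly positive because $\Gamma\subset PU(n,1)$ is a lattice, so $\faktor{\bB^n}{\Gamma}$ has finite positive Bergman volume. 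Combined with nefness, $c_1(K_X+D)^n>0$ yields bigness, concluding the proof.

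The only delicate point is the Mumford extension argument in the last paragraph: one must verify that the metric induced on $\det T_X(-\log D)$ from a good metric is again good, so that \cite[Theorem 1.4]{Mum77} applies and identifies the non-pluripolar self-intersection $\int_{X-D}\omega^n$ with the algebraic intersection number $c_1(K_X+D)^n$. This is a direct consequence of the closure of the class of good forms under wedge products, and is implicit in the references already used; all other steps are bookkeeping on top of \cref{lem:To} and \cref{lem:current}.
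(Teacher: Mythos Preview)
The paper does not give its own proof of this lemma; it is stated as a citation to \cite[Proposition 1]{Mok12}. Your argument is correct and in fact recapitulates, in the toroidal-compactification setting, exactly the extension argument the paper carries out in Step~3 of the proof of \cref{thm:equality} (same use of \cref{lem:current} applied to $\det\omega^{-1}$, same verification of adaptedness to log order via the frame coming from \cref{lem:To}).

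One simplification is available for the bigness step: rather than invoking Mumford's good-metric formalism to compute $c_1(K_X+D)^n$, you can appeal directly to \cref{lem:Gue}. The paper has already observed (just before \cref{lem:To}, via the Ahlfors--Schwarz lemma) that $\omega$ has at most Poincar\'e growth near $D$; hence the extended current $\varpi$ satisfies the hypotheses of \cref{lem:Gue}, which yields bigness and nefness simultaneously. Your Mumford argument is also valid, but it duplicates machinery the paper has already packaged.
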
 
\subsection{Proof of \cref{corx}}
We shall show how to apply \cref{main,main2} to derive \cref{corx}. 
\begin{proof}[Proof of \cref{corx}]
	 We first assume that  parabolic subgroups of $\Gamma$ are unipotent.	 
	 By \cite[Theorem 1]{Mok12},  there is   a toroidal compactification $\overline{X}$ for the ball quotient $X:=\faktor{\bB^n}{\Gamma}$, so that $D:=\overline{X}-X$ is a smooth divisor.   Moreover, $\overline{X}$ is projective.  Fix any ample polarization $L$ on $X$.   By \cref{main2},  the log Higgs bundle 
	 $(E,\theta):=(\Omega_{\overline{X}}^1(\log D)\oplus \cO_{\overline{X}},\theta)$ on $(\overline{X},D)$ defined in \eqref{eq:Higgs} is $\mu_L$-polystable with 	
	 \begin{align} \label{eq:chern2}
	 	2c_2(\Omega_{\overline{X}}^1(\log D))-\frac{n}{n+1}c_1(\Omega_{\overline{X}}^1(\log D))^2=0. 
	 \end{align} 
 Let us denote by $\overline{X}^\sigma$ and $D^\sigma$ the conjugate varieties of $\overline{X}$  and $D$ under $\sigma$. Hence $\overline{X}^\sigma$ is a smooth projective variety and $D^\sigma$  is a smooth divisor on $\overline{X}^\sigma$. For any coherent sheaf $\cE$ on $\overline{X}^\sigma$,  we denote by $\cE^\sigma$ its conjugate under $\sigma$, which is also a coherent sheaf on $\overline{X}^\sigma$.   Note that the conjugate action induces a canonical  isomorphism between cohomology groups \begin{align*} 
 	\Phi_k:H^{2k}(\overline{X}, \bC)\stackrel{\simeq}{\to}H^{2k}(\overline{X}^\sigma, \bC),
 	\end{align*}
and that Chern classes of vector bundles over $\overline{X}$ are preserved under  $\sigma$ in the sense that $
 	\Phi_k(c_k(F))=c_k(F^\sigma)$ for any holomorphic vector bundle $F$ over $\overline{X}$.   Since $\big(\Omega_{\overline{X}}^1(\log D)\big)^\sigma=\Omega_{\overline{X}^\sigma}^1(\log D^\sigma)$,
\eqref{eq:chern2} also holds for the log cotangent bundle $\Omega_{\overline{X}}^1(\log D)$. Moreover, the conjugate of $(E,\theta)$ under $\sigma$ is the log Higgs bundle $(E^\sigma,\theta^\sigma):=(\Omega_{\overline{X}^\sigma}^1(\log D^\sigma)\oplus \cO_{X^\sigma},\theta^\sigma)$ on $(\overline{X}^\sigma,D^\sigma)$ defined in \eqref{eq:Higgs}. Hence for any Higgs subsheaf $\cF$ of $(E,\theta)$, $\cF^\sigma$ is also a Higgs subsheaf of $(E^\sigma,\theta^\sigma)$ with the slope inequality $\mu_{L}(\cF)=\mu_{L^\sigma}(\cF^\sigma)$.  Hence $(E^\sigma,\theta^\sigma)$  is $\mu_{L^\sigma}$-polystable.  By \cref{main}, $\overline{X}^\sigma-D^\sigma$ is also a ball quotient, with $\overline{X}^\sigma$ its toroidal compactification.

For a general torsion free lattice $\Gamma\subset PU(n,1)$, there is a finite index subgroup $\Gamma'\subset \Gamma$ so that parabolic subgroups of $\Gamma'$ are unipotent (see for example \cite[\S 3.3]{BU20}). Denote by $X:=\faktor{\bB^n}{\Gamma}$ and $Y:=\faktor{\bB^n}{\Gamma'}$.      Since the base change of  an \'etale morphism   is étale,  we conclude that $Y^\sigma\to X^\sigma$ is also  a finite \'etale  surjective morphism. By the above result, $Y^\sigma$ is the ball quotient, and thus so is  $X^\sigma$.   \cref{corx} is proved.
 	\end{proof}
 \begin{rem}
 In the above proof we  show that a toroidal compactification of a ball quotient is also a toroidal compactification of another ball quotient.  As pointed out by the referee, this fact follows from \cref{corx} directly. His/her elegant argument is as follows: since an Abelian variety is simply an algebraic variety with a group law deﬁned by regular functions, a Galois conjugate of the Abelian variety as a component of the compactifying divisor is by itself an Abelian variety, which sits in the compactifying divisor of the ball quotient obtained as the conjugate of the original one according to \cref{corx}. 
 \end{rem}
\appendix
 
 \section{Metric rigidity for toroidal compactification of non-compact ball quotients}
 \smallskip
 \begin{center}by \textsc{Beno\^it Cadorel and Ya Deng}\end{center}
 \bigskip
 
 The main motivation of this appendix is to provide one building block for \cref{item2}. Our main result, \cref{thm:rigidity},  says   that there is no other   smooth compactification for non-compact ball quotient than the toroidal one, so that  the Bergman metric  grows \enquote{mildly} near the boundary. 
 Besides its own interests, this result is applied to show that the smoothness of $D$ in \cref{main} is necessary if one would like to characterize non-compact ball quotients.

 \subsection{Toroidal compactifications of quotients by non-neat lattices} \label{sect:torcomp}
 
 In this section, we recall a well known way of constructing the toroidal compactifications of ball quotients in the case where the lattice has torsion at infinity.  The reader will find more details about the natural orbifold structure on these compactifications in \cite{eys18}. For our purposes, the basic result given in Proposition \ref{proptorcom} will be sufficient.
 \medskip
 
 Recall that we say that a lattice $\Gamma \subset PU(n,1)$ is {\em neat} (cf. \cite{Bor69}) if for any $g \in \Gamma$, the subgroup of $\mathbb C^\ast$ generated by the eigenvalues of $g$ is torsion free. This implies that $\Gamma$ is torsion free and that all parabolic elements of $\Gamma$ are unipotent, so that the toroidal compactifications of $\faktor{\mathbb B^n}{\Gamma}$ provided by \cite{AMR10, Mok12} are {\em smooth} (there is no "torsion at infinity"). Note that by \cite[Proposition 17.4]{Bor69} in the arithmetic case, and \cite{Bor63}, or \cite[Theorem 6.11]{Rag72} in general, any lattice in $PU(n,1)$ admits a finite index neat normal sublattice.

 \begin{proposition} \label{proptorcom}
 	Let $\Gamma \subset PU(n, 1)$ be a torsion free lattice, and let $\Gamma' \subset \Gamma$ be a finite index normal \emph{neat} sublattice. Let $U = \faktor{\mathbb B^n}{\Gamma}$, $U' = \faktor{\mathbb B^n}{\Gamma'}$, and denote by $X'$ the smooth toroidal compactification of $U' = \faktor{\mathbb B^n}{\Gamma'}$ as constructed in \cite{AMR10, Mok12}. 
 	
 	Then the natural action of the finite group $G = \faktor{\Gamma}{\Gamma'}$ on $U'$ extends to $X'$, and the quotient $X = \faktor{X'}{G}$ is a normal projective space, with boundary $X - U$ made of quotient of abelian varieties by finite groups. Moreover, when $\Gamma$ is arithmetic, $X$ coincides with the toroidal compactification of $U$ constructed in \cite{AMR10}.
 \end{proposition}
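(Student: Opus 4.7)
The strategy is to transfer the entire statement to the local models near each cusp of $\Gamma'$, where the toroidal compactification of $U'$ is completely explicit, and check equivariance under the finite group $G=\Gamma/\Gamma'$ cusp by cusp. Recall that in the ball case the rational boundary components of $\mathbb{B}^n$ are points of $\partial \mathbb{B}^n$, so for each such point $c'$ the unipotent radical $N_{c'}$ of the stabilizer in $\Gamma'$ is a lattice in the Heisenberg-type nilpotent group, and a punctured neighbourhood of the corresponding cusp in $U'$ is biholomorphic to a punctured disk-bundle neighbourhood of the zero section of a negative line bundle $\mathcal{L}_{c'}$ over the abelian variety $A_{c'}:=N_{c'}\backslash W_{c'}$, with $W_{c'}\simeq\mathbb{C}^{n-1}$. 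The toroidal compactification $X'$ is obtained, after taking the quotient by the full $\Gamma'$-action, by gluing in these zero sections; since the Heisenberg centre is one-dimensional, no choice of rational cone decomposition is needed and the construction is canonical at each cusp.

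Next I would extend the $G$-action. The group $\Gamma$ permutes the rational boundary components of $\mathbb{B}^n$, so $G$ permutes the $\Gamma'$-cusps; for each cusp $c'$ with stabilizer $\Gamma_{c'}\subset\Gamma$, the sublattice $N_{c'}=\Gamma'\cap\Gamma_{c'}$ is normal of finite index in $\Gamma_{c'}$ (since $\Gamma'$ is normal in $\Gamma$ and the quotient $G$ is finite). An element $\gamma\in\Gamma_{c'}$ acts by a holomorphic automorphism of the horospherical coordinates adapted to $c'$ that normalizes $N_{c'}$, hence it descends to a biholomorphism of the punctured disk-bundle neighbourhood that extends trivially across the zero section (the action is fibrewise linear on $\mathcal{L}_{c'}$ and affine on $A_{c'}$). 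Doing this at every cusp and glueing gives a holomorphic $G$-action on $X'$ extending the one on $U'$, and the divisor $D':=X'\setminus U'$ is preserved component by component by the stabilizers $\Gamma_{c'}/N_{c'}$.

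Then $X=X'/G$ is automatically a normal projective variety, because $X'$ is smooth projective, the action of the finite group $G$ is by biholomorphisms, and such a quotient is a normal algebraic variety that is projective by the standard averaging argument on any $G$-invariant ample line bundle (after passing to a power). The boundary decomposes as $X\setminus U=(X'\setminus U')/G$, and the component coming from a $\Gamma$-orbit of cusps is the quotient of the corresponding abelian variety $A_{c'}$ by the finite group $\Gamma_{c'}/N_{c'}$ (which need not act freely since $\Gamma$ is only assumed torsion free, not neat). For the arithmetic case, the toroidal compactification of $U$ constructed in \cite{AMR10} is defined by performing the same gluing at each $\Gamma$-cusp using the Heisenberg data of $\Gamma$ itself; one identifies the two constructions by checking that the local charts of $X'/G$ near $D$ coincide with those constructed directly from $\Gamma_{c'}$ acting on $W_{c'}$ and on $\mathcal{L}_{c'}$, which is a diagram chase using $N_{c'}\trianglelefteq \Gamma_{c'}$ and $\Gamma_{c'}/N_{c'}\hookrightarrow G$.

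The main obstacle is the middle step: verifying that the $\Gamma_{c'}$-action extends holomorphically across the compactifying divisor, and that the gluings between the local charts are compatible. This amounts to using the precise form of the horospherical coordinates and the fact that $\gamma\in\Gamma_{c'}$ induces an isomorphism of the pair $(W_{c'},N_{c'})$, so the induced biholomorphism of $N_{c'}\backslash W_{c'}\times \Delta^*\to N_{c'}\backslash W_{c'}\times\Delta^*$ in fact extends across $\{0\}$; once this is in place, the remaining statements about projectivity, normality, boundary structure and comparison with \cite{AMR10} are formal.
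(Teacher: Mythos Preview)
Your plan is essentially the same as the paper's: both arguments work cusp by cusp in explicit Siegel/horospherical coordinates, use that $\Gamma$ normalizes $\Gamma'$ to show the action descends to the partial quotients, and verify that the induced map extends holomorphically across the compactifying abelian variety. The paper carries out exactly the computation you flag as the ``main obstacle'': writing the induced map on $G_b^{(N)}$ as $(z',z_n)\mapsto (Az'+u\log z_n+z_0',\,Cz_n^{a}e^{b\cdot z'})$ and observing that well-definedness and invertibility force $u=0$, $a=\pm 1$, so the map extends across $\{z_n=0\}$. Your assertion that the action is ``fibrewise linear on $\mathcal{L}_{c'}$'' is the output of this calculation rather than something one can take for granted, so you should expect to do it.

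Two points where the paper proceeds differently from your outline. First, the paper treats the general case $g\cdot b=b'$ with $b\neq b'$ directly in the key lemma, not only the stabilizer case; your sketch handles the permutation of cusps and the stabilizer action separately, which is fine but needs the same extension argument for the maps $\Omega_b^{(N)}\to\Omega_{b'}^{(N')}$. Second, for the comparison with \cite{AMR10} in the arithmetic case, the paper does not do a chart-by-chart identification as you propose; instead it observes that since $\dim_{\mathbb R} U_b=1$ there is a unique admissible polyhedral decomposition, and then invokes the functoriality of compatible toroidal compactifications due to Harris. This is cleaner than a direct diagram chase. Finally, the paper also checks that $X$ is independent of the choice of $\Gamma'$ (by passing to $\Gamma'\cap\Gamma''$), a point your outline omits but which is needed for the statement to be well posed.
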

 
 Before explaining how to prove Proposition \ref{proptorcom}, let us recall the construction of $X'$ as it is defined in \cite{Mok12} (see also \cite{cad16} for a similar discussion).
 
 Each component $D$ of $X' - U'$ is associated to a certain $\Gamma'$-orbit of points of $\partial \mathbb B^n$, whose points are called the $\Gamma'$-{\em rational boundary components} of $\partial B^n$ (cf. \cite[Chapter 3]{AMR10} or \cite[\S 1.3]{Mok12}). Let $b \in \partial \mathbb B^n$ be such a point, and let $N_b \subset PU(n,1)$ be its stabilizer. This is a maximal parabolic real subgroup of $PU(n, 1)$; let us denote by $W_b$ its unipotent radical. This group is an extension $1 \to U_b \to W_b \overset{\pi}{\to} A_b \to 1$, where $A_b \cong \mathbb C^{n-1}$, and $U_b \cong \mathbb R$ is the center of $W_b$. Let $L_b = \faktor{N_b}{W_b}$. This reductive group can be embedded as a Levi subgroup in $N_b$, so that $N_b = W_b \cdot L_b$. Moreover, we have a further decomposition $L_b = U(n-1) \times \mathbb R$. 
 (all this description can be obtained e.g. by specializing the discussion of \cite[Section 1.3]{BB66} or \cite[Section 4.2]{AMR10} to the case of the ball). 
 \medskip
 
 This Lie theoretic description of $N_b$ can be understood more easily by expressing the action of the previous groups on the horoballs tangent to $b$. Let $(S_b^{(N)})_{N \geq 0}$ be the family of these horoballs. Each $S_b^{(N)} \subset \mathbb B^n$ can be described as an open subset in a Siegel domain of the third kind, as follows:
 \begin{equation} \label{eq:exprsn}
 	S_b^{(N)} \simeq \{ (z', z_n) \in \mathbb C^{n-1} \times \mathbb C \; | \; {\rm Im}\, z_n > || z ' ||^2 + N \}.
 \end{equation}
 We have $S_b^{(0)} \cong \mathbb B^n$, and when $b = (0, ..., 0, 1)$, the change of coordinates between the two descriptions of the ball is given by the {\em Cayley transform}
 $$
 (w_1, ..., w_{n-1}, w_n) \in \mathbb B^n \mapsto (z', z_n) = ( \frac{w_1}{1 - w_n}, ..., \frac{w_{n-1}}{1 - w_n},  i \frac{1 + w_n}{1 - w_n}) \in S^{(0)}_{(0, \dots, 0, 1)}.
 $$
 
 Now, if $g \in W_b$, we can write $g = (s, a)$ accordingly to the decomposition $W_b \overset{sets}{=} U_b \times A_b$ ($U_b \cong \mathbb R$, $A_b \cong \mathbb C^{n-1}$), and we have, for any $(w', w_n) \in S_b^{(N)}$:
 \begin{equation} \label{eq:exprWb}
 	g \cdot (z', z_n) = (z' + a, z_n + i||a||^2 + 2 i \overline{a} \cdot z' + s). 
 \end{equation}
 We check easily that $S^{(N)}_b$ is preserved by $W_b$. Also, for any $g  \in L_b \simeq U(n-1) \times \mathbb R$, we can write $g = (r, t)$, and we then have
 \begin{equation} \label{eq:exprLb}
 	g \cdot (z', z_n) = (e^{t} (r \cdot z'), e^{2t} z_n).
 \end{equation}
 
 We are now ready to describe the quotients of $S_b^{(N)}$ by the action of $\Gamma' \cap N_b$. Note first that since $\Gamma'$ is neat, we have $\Gamma' \cap N_b \subset W_b$. Then, by the discussion above, we obtain a decomposition as {\em sets} $N_b \overset{sets}{=} (\mathbb  C^{n-1} \times \mathbb R) \times (U(n-1) \times \mathbb R)$, in which the elements of $\Gamma' \cap N_b$ can be written as $(a, t, {\rm Id}, 0)$. It also follows from \cite{Mok12} that $\Gamma' \cap U_b = \mathbb Z \tau$ for some $\tau \in U_b \simeq \mathbb R$. This last fact permits to form the quotient $G_b^{(N)} = \faktor{S_b^{(N)}}{U_b \cap \Gamma'}$; using \eqref{eq:exprsn}, we can also express the latter quotient as an open subset of $\mathbb C^{n-1} \times \mathbb C^\ast$:  
 $$
 G_b^{(N)} = \{ (w', w_n) \in \mathbb C^{n-1} \times \mathbb C^\ast \;\  | \;\;  |w_n| e^{\frac{2\pi}{\tau} ||w'||^2} < e^{- \frac{2\pi}{\tau}N} \},
 $$
 and the quotient is then realized by the map $(z', z_n) \in S_b^{(N)} \to (z', e^{\frac{2i\pi}{\tau} z_n}) \in G_b^{(N)}$.
 \medskip

 The group $\Lambda_b := \pi(\Gamma' \cap W_b) \subset \mathbb C^{n-1}$ is an abelian lattice, acting on $G_b^{(N)} \subset \mathbb C^{n-1} \times \mathbb C^\ast$ as
 $$
 a \cdot (z', z_n) = (z' + a, e^{-\frac{2\pi}{\tau} ||a||^2 - \frac{4\pi}{\tau} \overline{a} \cdot z'} z_n),
 $$
 Clearly, the closure $\overline{G_b^{(N)}}$ in $\mathbb C^n$ is an open neighborhood of $\mathbb C^{n-1} \times \{0\}$. We can form the quotient 
 $$
 \Omega^{(N)}_b = \faktor{\overline{G_b^{(N)}}}{\Lambda_b}
 $$
 which is then isomorphic to a tubular neighborhood of the abelian variety $\faktor{\mathbb C^{n-1}}{\Lambda_b}$ in some negative line bundle. Finally, the toroidal compactification $X'$ can be obtained by glueing the open varieties $\Omega_b^{(N)}$ to $U'$ (as $b$ runs among a system of representatives of the rational boundary components, and $N$ is large enough).
 \medskip
 
 Our claims about $X$ can be derived from the following lemma.
 \begin{lem} \label{lem:actioncomp}
 	Let $b \in \partial \mathbb B^n$ be a $\Gamma'$-rational boundary component, and let $g \in \Gamma$. Then the point $b' = g \cdot b$ is also $\Gamma'$-rational, and there exists $N, N' > 0$, for which $g$ induces an isomorphism $S_b^{(N)} \overset{g}{\to} S_{b'}^{(N')}$, yielding in turn a unique compatible biholomorphism $\Omega_b^{(N)} \to \Omega_{b'}^{(N')}$.
 \end{lem}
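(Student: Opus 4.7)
The overall plan rests on exploiting the normality of $\Gamma'$ in $\Gamma$, so that conjugation by any $g \in \Gamma$ is an automorphism of $\Gamma'$ intertwining all the parabolic data attached to $b$ and to $b' := g\cdot b$. First I would verify the rationality of $b'$. Since the stabilizer of $b'$ in $PU(n,1)$ equals $g N_b g^{-1} = N_{b'}$, and since $g \Gamma' g^{-1} = \Gamma'$, one has
\[
\Gamma' \cap N_{b'} = (g\Gamma' g^{-1})\cap (gN_bg^{-1}) = g(\Gamma' \cap N_b)g^{-1}.
\]
In particular the right-hand side is again a cocompact lattice in $W_{b'}$, so $b'$ is $\Gamma'$-rational.

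Next I would produce the horoball isomorphism. As $g$ acts by a Bergman isometry of $\mathbb B^n$ sending $b$ to $b'$, it sends the family $\{S_b^{(N)}\}_{N>0}$ bijectively onto $\{S_{b'}^{(N')}\}_{N'>0}$; the precise correspondence $N \mapsto N'(N,g)$ is an explicit calculation in the Cayley coordinates \eqref{eq:exprsn}, using the Siegel-domain action formulas \eqref{eq:exprWb} and \eqref{eq:exprLb} together with an Iwasawa-type decomposition of $g$ relative to the parabolic $N_{b'}$. Choosing $N$ large enough for $\Omega_b^{(N)}$ to be well defined (and $N'$ correspondingly large), one obtains the desired biholomorphism $g\colon S_b^{(N)} \to S_{b'}^{(N')}$.

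Finally I would descend this biholomorphism to the toroidal charts. The key observation is that the unipotent radical $W_b$ and its center $U_b$ are intrinsic subgroups of $N_b$, whence $g W_b g^{-1}= W_{b'}$ and $g U_b g^{-1}= U_{b'}$. Combined with the normality argument above this produces isomorphisms
\[
U_b \cap \Gamma' \xrightarrow{\sim} U_{b'} \cap \Gamma', \qquad W_b \cap \Gamma' \xrightarrow{\sim} W_{b'} \cap \Gamma',
\]
and, after projecting through $\pi$, of the translation lattices $\Lambda_b \xrightarrow{\sim} \Lambda_{b'}$. These intertwining properties mean that $g$ descends through the quotient $\faktor{S_b^{(N)}}{U_b \cap \Gamma'} = G_b^{(N)} \to G_{b'}^{(N')}$, extends by Riemann-type continuity across the boundary divisor to $\overline{G_b^{(N)}}\to \overline{G_{b'}^{(N')}}$ (the map being bounded and holomorphic on the punctured side), and then descends once more modulo $\Lambda_b, \Lambda_{b'}$ to a biholomorphism $\Omega_b^{(N)} \to \Omega_{b'}^{(N')}$. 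Uniqueness follows because the map is prescribed on the open dense subset coming from $U = \faktor{\mathbb B^n}{\Gamma'}$. The only genuine work lies in the bookkeeping of the constants $N, N'$ and in checking holomorphicity across the abelian boundary stratum; both reduce to explicit computations in the Siegel model and raise no conceptual difficulty.
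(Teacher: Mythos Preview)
Your outline follows the same architecture as the paper's proof: use normality of $\Gamma'$ to transport the parabolic data from $b$ to $b' = g\cdot b$, use that $PU(n,1)$ permutes horoballs to get $S_b^{(N)} \xrightarrow{g} S_{b'}^{(N')}$, descend modulo $U_b \cap \Gamma'$ to the $G_b^{(N)}$ level, extend across $\{z_n = 0\}$, and finally descend modulo the lattices $\Lambda_b, \Lambda_{b'}$.

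The one place where the paper is more explicit than your sketch is the extension across the boundary. You invoke Riemann removable singularities, citing boundedness; the second coordinate of the image is indeed bounded by the defining inequality of $G_{b'}^{(N')}$, but boundedness of the $\mathbb{C}^{n-1}$-component is \emph{not} automatic --- a priori a $\log z_n$ term could appear. The paper pins this down by observing that, after aligning Siegel coordinates via some $u \in U(n)$ with $u\cdot b' = b$, one has $ug \in N_b$, so $g$ acts \emph{affinely} on $S_b^{(N)}$. Pushing this through the quotient by $\mathbb Z\tau$ gives $\widetilde g$ the (a priori multivalued) shape
\[
(z', z_n) \longmapsto \bigl(A z' + u \log z_n + z_0',\; C\, z_n^{a}\, e^{b\cdot z'}\bigr),
\]
and then single-valuedness forces $u = 0$, $a \in \mathbb Z$, while invertibility forces $a = \pm 1$. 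From this the holomorphic extension to $\overline{G_b^{(N)}}$ is immediate. Your promised ``explicit computations in the Siegel model'' would of course recover exactly this, so there is no genuine gap; just be aware that the boundedness you invoke is the \emph{output} of that computation rather than something available for free.
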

 \begin{proof}
 	As $\Gamma'$ is torsion free,  a point $z \in \partial \mathbb B^n$ is $\Gamma'$-rational if and only if $W_b \cap \Gamma' \neq \{ e \}$ (see \cite[\S 1.3]{Mok12}). Since $g$ normalizes $\Gamma'$, we have $g(W_b \cap \Gamma')g^{-1} \subset W_{b'} \cap \Gamma'$ so $b'$ is $\Gamma'$-rational if $b$ is.
 	
 	As for our second claim, since the set of horoballs is preserved by the action of $PU(n, 1)$, we may find $N, N'$ such that $g$ induces a isomorphism $S_b^{(N)} \to S_{b'}^{(N)}$. Let $(x', x_n)$ (resp. $(y', y_n)$) be standard coordinates on $S_b^{(N)}$ (resp. $S_b^{(N')}$) as in \eqref{eq:exprsn}, chosen so that $(y', y_n) = (x', x_n) \circ u$ for some $u \in U(n)$ satisfying $u \cdot b' = b$. Then $ug \in N_b$, and by \eqref{eq:exprWb} and \eqref{eq:exprLb}, we have $(y', y_n) \circ g = f(x', x_n)$ for some {\em affine} map $f$.
 	
 	Since $g$ normalizes $\Gamma'$, we have $g (\Gamma' \cap U_b) g^{-1} = \Gamma' \cap U_{b'}$, so the map $S_b^{(N)} \overset{g}{\to} S_{b'}^{(N')}$ passes to the quotient to give a map $\widetilde{g} : G_b^{(N)} {\to} G_{b'}^{(N')}$. Using an explicit expression for the affine map $f$, we find an (\emph{a priori} multivaluate) expression for $\widetilde{g}$ as
 	$$
 	(z', z_n) \in G_b^{(N)} \overset{\widetilde{g}}{\mapsto} (A \cdot z' + u \log z_n + z_0', \;  C\, z_n^a \, e^{b \cdot z'}) \in G_{b'}^{(N')}
 	$$
 	for some $A \in M_{n-1}(\mathbb C)$, some vectors $u, b, z_0' \in \mathbb C^{n-1}$ and $C, a \in \mathbb C$. The formula above induces a well-defined, invertible map $G_b^{(N)} \to G_{b'}^{(N')}$, so we have $u=0, a=\pm 1$. This implies that $\widetilde{g}$ extends holomorphically to $\widetilde{g} : \overline{G_b^{(N)}} \to \overline{G_{b'}^{(N')}}$. Finally, as $g$ normalizes $\Gamma'$, \(\widetilde{g}\) passes to the quotient by \(\Lambda_{b}\) and \(\Lambda_{b'}\), giving a biholomorphism $\Omega_b^{(N)} \to \Omega_{b'}^{(N')}$.
 \end{proof}

 %
 
 Going back to the proof of Proposition \ref{proptorcom}, we see that Lemma \ref{lem:actioncomp} permits to define a unique action of the quotient $G = \faktor{\Gamma}{\Gamma'}$ on $X'$, compatible with its natural action on $U'$. We then let $X:= \faktor{X'}{G}$. The following lemma ends the proof of Proposition \ref{proptorcom}, and clarifies the link with the construction of \cite{AMR10}. 
 
 \begin{lem}
 	The variety $X$ defined above does not depend on the choice of $\Gamma'$. When $\Gamma$ is arithmetic, $X$ coincides with the toroidal compactification of $U$ constructed in \cite{AMR10}.
 \end{lem}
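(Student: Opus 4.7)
The plan is to handle the two assertions in succession. For the independence of the auxiliary sublattice $\Gamma'$, I would first reduce to the case of nested sublattices: if $\Gamma'_1, \Gamma'_2 \subset \Gamma$ are two neat normal finite index sublattices, their intersection $\Gamma'' := \Gamma'_1 \cap \Gamma'_2$ is again a neat normal finite index sublattice of $\Gamma$, hence also of each $\Gamma'_i$. It therefore suffices to prove the transitivity statement: for any chain $\Gamma'' \subset \Gamma' \subset \Gamma$ with both $\Gamma', \Gamma''$ normal neat finite index sublattices, the toroidal compactification $X'$ of $U'$ is naturally isomorphic to the quotient $X''/(\Gamma'/\Gamma'')$. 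Granted this, we deduce
\[
X''/(\Gamma/\Gamma'') \;\cong\; \bigl(X''/(\Gamma'/\Gamma'')\bigr) \big/ (\Gamma/\Gamma') \;\cong\; X'/(\Gamma/\Gamma'),
\]
so applying this to $\Gamma' = \Gamma'_1$ and $\Gamma' = \Gamma'_2$ shows that both choices produce the same $X$.

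The main step is the transitivity claim itself. The argument of \cref{lem:actioncomp}, applied to $\Gamma' \supset \Gamma''$ in place of $\Gamma \supset \Gamma'$, furnishes an action of the finite group $\Gamma'/\Gamma''$ on $X''$ extending its natural action on $U''$. Since $X'$ and $X''/(\Gamma'/\Gamma'')$ are both normal proper varieties containing $U'$ as a dense Zariski open subset, it is enough to identify them analytically on a neighborhood of each $\Gamma'$-rational boundary component. For such a component $b$, pick a system of representatives $b = b_1, \ldots, b_k$ for the $\Gamma''$-orbits contained in the $\Gamma'$-orbit of $b$; the local models $\Omega_{b_i}^{(N)}$ glued into $X''$ near the boundary indexed by the $b_i$ are permuted by $\Gamma'/\Gamma''$, with the stabilizer of the component at $b_i$ equal to $(\Gamma' \cap N_{b_i})/(\Gamma'' \cap N_{b_i})$. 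A direct computation with the Siegel coordinates of \eqref{eq:exprsn}--\eqref{eq:exprLb} shows that this stabilizer acts on $\Omega_{b_i}^{(N)}$ through its image in $(W_{b_i} \cap \Gamma')/(W_{b_i} \cap \Gamma'')$, and the resulting quotient is exactly the local model $\Omega_{b}^{(N)}$ used to glue $X'$. Accounting for the permutation of the $b_i$ by $\Gamma'/\Gamma''$ then yields the desired local biholomorphism near $b$.

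For the identification with \cite{AMR10} in the arithmetic case, one invokes the fact that the construction there proceeds by the same recipe: for every $\Gamma$-rational boundary component one glues a local model obtained as the quotient of a Siegel neighborhood by the full parabolic intersection $N_b \cap \Gamma$, which is precisely what one obtains from $\Omega_b^{(N)}$ after further quotienting by the finite torsion group $(N_b \cap \Gamma)/(W_b \cap \Gamma')$. Since both compactifications share these analytic charts near the boundary and agree with $U$ over the interior, the comparison reduces to a uniqueness statement for the gluing.

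The main obstacle I expect is the explicit bookkeeping in the transitivity step: tracking how $\Gamma'/\Gamma''$ permutes the $\Gamma''$-rational boundary components lying over a given $\Gamma'$-rational one, and verifying that the quotient of the Siegel chart reproduces precisely Mok's local model for $X'$ rather than a singular or non-normal variant. Once this local computation is in hand, both statements of the lemma follow by descent.
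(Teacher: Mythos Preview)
Your argument for the first assertion is correct and follows essentially the same route as the paper: reduce to nested sublattices by intersecting, then use \cref{lem:actioncomp} to see that the two group actions on the local Siegel charts $\Omega_b^{(N)}$ are compatible, so that the successive quotients agree. The paper states this more tersely (it simply says the actions are compatible over each $\Omega_b^{(N)}$), while you have written out the permutation of the $\Gamma''$-boundary components over a given $\Gamma'$-component and the identification of stabilizers; since both $\Gamma'$ and $\Gamma''$ are neat one has $\Gamma' \cap N_b = \Gamma' \cap W_b$, so your stabilizer computation is exactly right.

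For the second assertion the paper takes a different and somewhat cleaner path. Rather than attempting a chart-by-chart comparison with the construction in \cite{AMR10}, the paper observes that the general toroidal machinery of \cite{AMR10} depends on a choice of $\Gamma$-admissible polyhedral decomposition for each rational boundary component, and that for the ball this choice is \emph{unique} because $\dim_{\mathbb R} U_b = 1$. One then invokes the functoriality of toroidal compactifications under compatible fan data (as in \cite[Lemma 2.6]{Har89}): the fan data for $\Gamma' \subset \Gamma$ are automatically compatible, so the compactification of $U$ in \cite{AMR10} is the quotient of $X'$ by $G$, which is your $X$. Your direct approach---asserting that \cite{AMR10} ``proceeds by the same recipe'' and matching local models---can be made to work, but as stated it elides the point that \cite{AMR10} is a general construction depending on combinatorial choices, and the reason it agrees here is precisely that no choice is available. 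The paper's argument isolates this structural fact and then appeals to an existing functoriality statement rather than redoing the local gluing.
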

 \begin{proof}
 	Let $\Gamma', \Gamma'' \subset \Gamma$ be two neat lattices of finite index, and let us show that the varieties constructed from $\Gamma'$ and $\Gamma''$ are the same. Since $\Gamma \cap \Gamma'$ also has finite index in $\Gamma$, we may assume $\Gamma'' \subset \Gamma'$. By Lemma~\ref{lem:actioncomp}, the action of two lattices $\Gamma'' \subset \Gamma'$ are compatible with each other over each set $\Omega_b^{(N)}$, which suffices to prove the first point. 
 	
 	Let us prove the second point. The construction of the toroidal compactification of \cite{AMR10} depends on a certain choice of {\em $\Gamma$-admissible polyhedra} for each rational boundary component (see \cite[Definition 5.1]{AMR10}). In the case of the ball, since $\dim_{\mathbb R} U_b = 1$ for any $b \in \partial \mathbb B^n$, there is only one such possible choice (cf. [{\em loc. cit.}, Theorem 4.1.(2)]). The claim now follows from the functoriality of compatible toroidal compactifications (see \cite[Lemma 2.6]{Har89}), since \enquote{choices} of polyhedra admissible for two lattices $\Gamma' \subset \Gamma$ are thus automatically compatible with each other.
 \end{proof}
 
 Note that even though this construction of $X$ is well adapted to our purposes, it should not be used to define $X$ as an {\em orbifold}, as it has the drawback of producing artificial ramification orders along the boundary components of $X$. As explained in \cite{eys18}, a better way of proceeding is to construct directly open neighborhoods of the components of $X - U$ as {\em stacks}, before glueing them to $U$.

 \subsection{Main results}
 Let us first begin with the following lemma.
 \begin{lem}\label{lem:toroidal}
 	Let $Y$ be  the toroidal compactification of the ball quotient  $U:=\faktor{\bB^n}{\Gamma}$   by a torsion free lattice $\Gamma\subset PU(n,1)$ whose parabolic isometries are all unipotent.   Let $X$ be another projective compactification of $U$, and assume that \(X\) has at most klt singularities.
 	
 	Then the identity map of $U$ extends to a birational morphism $f:X\to Y$.
 \end{lem}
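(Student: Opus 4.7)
The plan is to resolve the rational identification \(X \dashrightarrow Y\) induced by the identity on \(U\), and then show by rigidity that the projection to \(Y\) descends to a morphism on \(X\).  Let \(\Gamma \subset X \times Y\) be the closure of the diagonal \(\{(u,u) : u \in U\}\), and let \(W\) be a resolution of singularities of \(\Gamma\), equipped with the two projections \(\pi \colon W \to X\) and \(\mu \colon W \to Y\).  Both are proper birational morphisms restricting to isomorphisms over \(U\).  Since \(X\) is klt, hence normal, one has \(\pi_{\ast}\mathcal{O}_{W} = \mathcal{O}_{X}\), and by the rigidity lemma the problem reduces to showing that every fiber \(\pi^{-1}(x)\) is contracted to a point by \(\mu\).

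For \(x \in U\) the fiber is a single point.  Fix now \(x \in X \setminus U\) and set \(F := \pi^{-1}(x)\); the fiber \(F\) is connected by Zariski's connectedness theorem, since \(X\) is normal and \(\pi\) is proper birational.  No point of \(F\) can be sent into \(U\) by \(\mu\), because \(\pi\) and \(\mu\) coincide (via the natural identifications) over \(\pi^{-1}(U)\); hence \(\mu(F) \subset D_{Y}\), and by connectedness \(\mu(F)\) lies in a single connected component \(A\) of \(D_{Y}\), which is an abelian variety.

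The crucial ingredient is the theorem of Hacon--McKernan asserting that fibers of any proper birational morphism from a normal variety to a klt variety are rationally chain connected.  Applied to \(\pi\), it gives that \(F\) is rationally chain connected.  But the abelian variety \(A\) contains no rational curves, so every rational curve in \(F\) must be collapsed by \(\mu\) to a point of \(A\); using the chain condition that consecutive links of a chain in \(F\) intersect, the corresponding image points must coincide, and we conclude that \(\mu(F)\) is a single point of \(A\).  The rigidity lemma then produces the desired birational morphism \(f \colon X \to Y\) with \(\mu = f \circ \pi\), which necessarily extends the identity of \(U\).

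The main obstacle is, I expect, of bibliographic rather than mathematical nature: verifying that the form of the Hacon--McKernan rational chain connectedness theorem available in the literature applies directly to the klt variety \(X\) without requiring an auxiliary boundary divisor.  All the remaining inputs — existence of a common resolution, connectedness of fibers of a proper birational morphism over a normal base, and the rigidity lemma — are classical.
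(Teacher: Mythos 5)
Your proposal is correct and follows essentially the same route as the paper's proof: resolve the indeterminacy of the birational identification, apply the Hacon--McKernan theorem to conclude that the fibers of the resolution over the klt variety $X$ are rationally chain connected, use that abelian varieties (the components of $Y-U$) contain no rational curves to see that these fibers are contracted, and conclude with the rigidity lemma. The only difference is cosmetic: you argue directly that every fiber is contracted and then invoke rigidity, whereas the paper argues by contradiction (a non-regular map would leave some fiber uncontracted), and your use of \cite{HM07} for klt $X$ with empty boundary is exactly what the paper does.
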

 \begin{proof}
 	The identity map of   $U$ defines a birational map $f:X\dashrightarrow Y$. Assume by contradiction that $f$ is not regular. One can take a resolution of indeterminacies $\mu:\tilde{X}\to X$ for $f$ so that $\mu|_{\mu^{-1}(U)}:\mu^{-1}(U)\xrightarrow{\sim}U$ is an isomorphism:  
 	\begin{equation*}
 		\begin{tikzcd}
 			&	\tilde{X}\arrow[ld,"\mu"'] \arrow[dr,"\tilde{f}"]&  \\
 			X\arrow[rr,"f",dashed]	&&Y  
 		\end{tikzcd}
 	\end{equation*}
 	By the rigidity result (see \cite[Chapter 3, Lemma 1.15]{Deb01}), there is a fiber $\mu^{-1}(z)$  with $z\in D$ which is not contracted by $\tilde{f}$. Clearly, we have $\tilde{f}(\mu^{-1}(z)) \subset Y - U$. 
 	
 	Since $X$ has klt singularities, the work of Hacon-McKernan \cite{HM07} implies that every fiber of $\mu$ is rationally chain connected. Thus, $\widetilde{f}(\mu^{-1}(z))$ is a point since abelian varieties do not contain rational curves.
 	This gives a contradiction.
 \end{proof}
 
 \begin{rem} \label{rem:quotientsing}
 	If we make the more restrictive hypothesis that \(X\) has at most quotient singularities, we can replace the use of \cite{HM07} by the work of Kollar \cite{Kol93}, which implies that each fiber of \(\mu\) is simply connected. As $Y-U$ is a disjoint union of abelian varieties, this also implies that the image of $\tilde{f} : \mu^{-1}(z) \to Y - U$ must be a point.
 \end{rem}
 
 Let us introduce a natural class of pairs under which our rigidity theorem will hold.
 
 \begin{dfn} \label{def:quotient}
 	Let $(X, D)$ be a pair consisting of normal algebraic variety and a reduced divisor. We say that $(X, D)$ has \emph{algebraic quotient singularities} if it admits a finite affine cover $(X_i)_{i \in I}$, such that each $(X_i, D \cap X_i)$ is the quotient of a smooth SNC pair $(U_i, D_i)$ by a finite group $G_i$ leaving $D_i$ invariant.
 \end{dfn}
 
 Note that for any lattice $\Gamma \subset {\rm Aut}(\mathbb B^n)$, if $X$ is the toroidal compactification of $U = \faktor{\mathbb B^n}{\Gamma}$ described in Section \ref{sect:torcomp}, then $(X, X - U)$ has algebraic quotient singularities. 
 \medskip
 
 We can now state our main result as follows.
 
 \begin{thm}\label{thm:rigidity}
 	Let $U:=\faktor{\bB^n}{\Gamma}$  be an $n$-dimensional ball quotient  by a torsion free lattice $\Gamma\subset PU(n,1)$.  Let $X$ be a klt compactification of $U$, and let $D:=X-U$.
 	
 	Let $D^{(1)} \subset D$ be the divisorial part of $D$. If the K\"ahler-Einstein metric $\omega$ on the bundle $T_{X}(-\log D^{(1)})|_{U}$ is adapted to log order near the generic point of any component of $D^{(1)}$, then $(X, D)$ identifies with the toroidal compactification of $U$.
 \end{thm}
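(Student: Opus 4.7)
The plan is to first reduce to the case where $\Gamma$ is replaced by a neat sublattice, and then run the local analysis at a hypothetical exceptional boundary component. The local step is in essence the argument the paper already sketches for the ``rigidity of toroidal compactification'' in the smooth, unipotent-parabolic setting.

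By the classical result of Borel recalled in \S\ref{sect:torcomp}, pick a normal finite-index neat sublattice $\Gamma' \triangleleft \Gamma$, set $G := \faktor{\Gamma}{\Gamma'}$ and $U' := \faktor{\bB^n}{\Gamma'}$, and let $Y'$ be the smooth toroidal compactification of $U'$ (its boundary is a disjoint union of abelian varieties). By Proposition~\ref{proptorcom}, the toroidal compactification of $U$ is then $Y = Y'/G$. Define $X'$ to be the normalization of $X$ in the function field $\mathbb{C}(U')$, so that $\pi : X' \to X$ is a finite $G$-equivariant surjection which is \'etale over $U$ and satisfies $X'/G = X$. The K\"ahler--Einstein metric $\omega$ on $U$ lifts to the K\"ahler--Einstein metric on $U'$ (both descend from the Bergman metric on $\bB^n$), and the adapted-to-log-order property for $\omega$ on $T_X(-\log D^{(1)})|_U$ at a generic point of a component of $D^{(1)}$ translates to the analogous property for $\omega$ on $T_{X'}(-\log (D')^{(1)})|_{U'}$ at the generic points of the divisorial boundary $(D')^{(1)}$ of $X'$, up to finitely many additional log factors coming from the ramification of $\pi$. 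Standard discrepancy calculus for finite covers ramified along $D$ implies that $X'$ is klt, so it suffices to establish $X' \simeq Y'$ as $G$-varieties over $U'$, from which $X \simeq Y$ descends.

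Since $\Gamma'$ is neat, Lemma~\ref{lem:toroidal} provides a birational morphism $f : X' \to Y'$ extending the identity on $U'$. Suppose for contradiction that $f$ is not an isomorphism. Then there is an irreducible $f$-exceptional divisor $D_1 \subset X' - U'$, whose image $f(D_1)$ has codimension $\geq 2$ in $Y'$ and is therefore contained in a single boundary abelian variety $A_1$. Since $X'$ is klt hence smooth in codimension one, pick a generic smooth point $x \in D_1$ and admissible coordinates $(z_1,\ldots,z_n)$ at $x$ with $D_1 = \{z_1 = 0\}$, and admissible coordinates $(w_1,\ldots,w_n)$ at $y = f(x) \in A_1$ with $A_1 = \{w_1 = 0\}$. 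The map $f$ then has local form
\[
f^* w_i = z_1^{m_i}\, \nu_i(z) \quad (1 \leq i \leq k), \qquad f^* w_i = \mu_i(z) \quad (k < i \leq n),
\]
with $\nu_i(x) \neq 0$, $m_i \geq 1$, and $k \geq 2$ by exceptionality. By Mok's asymptotic estimate \cite[eq.~(8)]{Mok12}, $\omega$ is adapted to log order on $T_{Y'}(-\log(Y'-U'))|_{U'}$, and the standard Poincar\'e-type asymptotics give $|d\log w_2|^2_{\omega^{-1}} \sim |w_2|^{-2}(-\log |w_1|)^{-1}$ near $y$. From the identity $f^* d\log w_2 = m_2\, d\log z_1 + d\log \nu_2(z)$, valid on $\{z_1 \neq 0\}$, and the triangle inequality on $\omega^{-1}$-norms, one reads off that $|d\log z_1|^2_{\omega^{-1}}$ blows up like $|z_1|^{-2m_2}$ up to logarithmic factors as $z_1 \to 0$. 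Since $d\log z_1$ is a nowhere vanishing local section of $\Omega^1_{X'}(\log (D')^{(1)})$ near $x$ and $m_2 \geq 1$, this contradicts the adapted-to-log-order bound, which forces $|d\log z_1|^2_{\omega^{-1}}$ to be bounded above by a log-power of $|z_1|$. Hence $f$ is an isomorphism, and $X \simeq Y$.

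The main obstacle lies in the reduction in the second paragraph: verifying rigorously that $X'$ inherits a klt structure and that the adapted-to-log-order hypothesis descends to $X'$ at the generic boundary points after pulling back through the ramification of $\pi$. Once this technical groundwork is in place, the local calculation in the third paragraph is essentially the one spelled out in the paper's earlier rigidity sketch, with the smoothness hypothesis replaced by the generic smoothness along $D_1$ provided by the klt assumption, and the descent $X' \simeq Y' \Rightarrow X \simeq Y$ is routine because both compactifications agree on $U$ and are normal.
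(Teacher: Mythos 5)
Your proposal is correct and follows essentially the same route as the paper: pass to a normal neat finite-index sublattice, take the normalization $X'$ of $X$ in $\bC(U')$ (klt by \cite[Corollary 5.20]{KM98}), extend the identity to a birational morphism $X'\to Y'$ via \cref{lem:toroidal}, rule out a contracted boundary divisor by pulling back Mok's norm estimate against the adapted-to-log-order hypothesis, and descend through the $G$-action using \cref{proptorcom}. The step you flag as the main obstacle is precisely the paper's \cref{lem:logorder}: near a generic point of a boundary divisor the normalization map is locally $(z_1,\ldots,z_n)\mapsto(z_1^m,z_2,\ldots,z_n)$, so $\mu^*d\log x_1=m\,d\log z_1$ and the adapted frame pulls back exactly (no extra log factors), while your stated asymptotic $|d\log w_2|^2_{\omega^{-1}}\sim|w_2|^{-2}(-\log|w_1|)^{-1}$ has the wrong sign of the log power (the paper uses $|dw_2|^2_{\omega^{-1}}\sim-\log|w_1|^2$; your expression is the estimate for the tangent vector $\partial/\partial w_2$), though this slip is harmless since any lower bound by a negative log power still forces polynomial blow-up of $|d\log z_1|^2_{\omega'^{-1}}$ and hence the same contradiction.
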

 
 \begin{rem} 
 	\begin{enumerate}
 		\item  Under the more restrictive assumption that $(X, D)$ has algebraic quotient singularities, the use of Lemma \ref{lem:toroidal} in our proof below can be made without appealing to the difficult result of \cite{HM07} (see Remark~\ref{rem:quotientsing}).	
 		\item  As an easy consequence of Theorem~\ref{thm:rigidity}, we can remark that there is no klt compactification $X$ of $U$ such that $X - U$ has codimension $\geq 2$. 
 	\end{enumerate}
 \end{rem}

 \begin{cor}  \label{cor:rigiditycor}
 	With the same assumptions as in Theorem \ref{thm:rigidity}, if $X$ is smooth and $D$ has simple normal crossings, then $D$ is in fact smooth, and each component is a \emph{smooth} quotient of an abelian variety $A$ by some finite group acting freely on $A$. 
 \end{cor}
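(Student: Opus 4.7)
The plan is to use \cref{thm:rigidity} to identify $(X,D)$ with the toroidal compactification of $U$, and then combine the explicit description from \cref{sect:torcomp} with the Chevalley--Shephard--Todd theorem to pin down the boundary structure. Concretely, I would fix a finite index normal neat sublattice $\Gamma' \subset \Gamma$ and write $X = X'/G$ with $G = \Gamma/\Gamma'$, where $X'$ is the smooth toroidal compactification of $U' = \faktor{\mathbb{B}^n}{\Gamma'}$; the boundary $D' = X' - U'$ is then a disjoint union of abelian varieties $A_b$ indexed by $\Gamma'$-rational boundary components, and $G$ permutes the $A_b$'s. Each component of $D$ has the form $A_b/H_b$ with $H_b := \mathrm{Stab}_G(A_b)$. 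Writing $K_b \subset H_b$ for the kernel of the natural map $H_b \to \mathrm{Aut}(A_b)$, the statement amounts to showing that $H_b/K_b$ acts \emph{freely} on $A_b$.

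A first observation is that components of $D$ are automatically pairwise disjoint in $X$: they correspond to $G$-orbits on $\{A_b\}_b$, and the images in $X'/G$ of distinct orbits cannot meet since the $A_b$ are themselves disjoint in $X'$. Combined with the SNC hypothesis --- which forces each component to be smooth --- this already yields the smoothness of $D$ as a disjoint union of smooth hypersurfaces. The remaining core content is therefore the freeness of the $H_b/K_b$-action, and this is where I expect the main obstacle to lie: \emph{a priori} an element of $H_b$ could act as a non-trivial affine transformation of $A_b$ with fixed points, and the smoothness of $X$ together with Chevalley--Shephard--Todd only forces the stabilizers in $G$ to be generated by pseudo-reflections.

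The key idea is to use the torsion-freeness of $\Gamma$ to upgrade the pseudo-reflection condition. I would first prove that for any pseudo-reflection $g \in G$, the fixed locus $F_g \subset X'$ is entirely contained in $D'$: a fixed point of $g$ lying in $U'$ would lift to a point of $\mathbb{B}^n$ stabilized by an element of the form $\gamma^{-1} \tilde g$ with $\gamma \in \Gamma'$ and $\tilde g \in \Gamma$ a lift of $g$; such a stabilizer is elliptic, hence of finite order, hence trivial since $\Gamma$ is torsion free, forcing $g$ to be trivial in $G$ --- a contradiction. Since $F_g$ is an irreducible smooth hypersurface contained in $D' = \bigsqcup_b A_b$, it must coincide with some $A_{b'}$, so every pseudo-reflection in $G$ fixes a boundary component pointwise. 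Now if $g \in H_b$ admitted a fixed point $q' \in A_b$, then $g$ would belong to the stabilizer $G_{q'}$, which by Chevalley--Shephard--Todd is generated by pseudo-reflections; each such generator fixes some component $A_{b'} \ni q'$, and disjointness of the $A_b$ forces $b' = b$, so every element of $G_{q'}$ --- in particular $g$ --- acts trivially on $A_b$. This shows $g \in K_b$, establishing the desired freeness and completing the proof.
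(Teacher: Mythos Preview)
Your argument is correct, but it takes a genuinely different route from the paper's.

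The paper proves \cref{cor:rigiditycor} \emph{inside} the proof of \cref{thm:rigidity}, after establishing the finite surjective morphism $g:Y'\to X$ (\cref{lem:isom}) but \emph{before} identifying $X$ with the toroidal compactification $Y=Y'/G$. The smoothness of $D$ is read off directly from the finite map: since the $A_i$'s are disjoint in $Y'$ and $g$ is finite, a crossing point of $D$ would force some $A_i$ to map into a codimension-two stratum, which is impossible. For the \'etale-quotient structure, the paper uses the explicit local model of $\mu$ obtained in \cref{lem:logorder}: near any smooth boundary point the map is $(z_1,\dots,z_n)\mapsto(z_1^m,z_2,\dots,z_n)$, so the restriction $A_i\to D_j$ is visibly \'etale. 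No group-theoretic input beyond torsion-freeness is needed, and in particular Chevalley--Shephard--Todd never enters.

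Your approach instead invokes the full strength of \cref{thm:rigidity} to write $X\cong X'/G$, and then argues group-theoretically: smoothness of $X$ forces point-stabilizers to be generated by pseudo-reflections, and torsion-freeness of $\Gamma$ forces the fixed hypersurface of any pseudo-reflection to lie in $D'$, hence to coincide locally (and then globally) with a single $A_b$. This is clean and conceptual, and it makes transparent \emph{why} the smoothness hypothesis on $X$ (not merely on $D$) is what drives the freeness of the boundary action. The cost is that it relies on the conclusion of \cref{thm:rigidity}, whereas the paper's argument is self-contained at an earlier stage of that proof and avoids the appeal to Chevalley--Shephard--Todd altogether. One minor wording point: $F_g$ need not be globally an irreducible hypersurface; what you actually use (and what suffices) is that the germ of $F_g$ at $q'$ is a smooth hypersurface, hence coincides with the germ of $A_b$ there.
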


 Let us prove Theorem \ref{thm:rigidity}. Let $\Gamma'\subset \Gamma$ be a subgroup of finite index so that all parabolic elements of $\Gamma'$ are unipotent. Writing $U':=\faktor{\bB^n}{\Gamma'}$, this gives a finite \'etale surjective morphism $U'\to U$. \medskip
 
 Let $X'$ be the normalization of $X$ in the function field of $U'$: this is a normal projective variety $X'$ compactifying $U'$, with a compatible finite surjective morphism $\mu:X'\to X$ (see e.g. \cite[Chapter 12, §9]{ACGH11}). 
 Since klt singularities are preserved under finite surjective morphisms, the variety $X'$ has at most klt singularities (see \cite[Corollary 5.20]{KM98}).
 
 \begin{rem}
 	If \((X, D)\) has algebraic quotient singularities, one sees easily that this is also the case for \(X'\). To see this, form the fiber product \(Z' = Z \times_{X} X'\), where \(Z \to X\) is an affine covering as in Definition~\ref{def:quotient}. By \cite[Theorem~2.23]{Kol07}, the variety \(Z'\), endowed with it natural boundary divisor, has algebraic quotient singularities. Finally, Lemma~\ref{lem:quotient2} shows that \(Z' \to X'\) is a quotient map, which gives the result. 
 \end{rem}
 
 

 Let $Y'$ be the toroidal compactification of $U'$, so that the boundary $A := Y' - U'$ is a smooth divisor.
 
 \begin{lem} \label{lem:isom}
 	The identity map on $U'$ extends as an isomorphism $f : X' \to Y'$. In particular, there is a finite surjective morphism 
 	$
 	g:Y'\to X,
 	$
 	which identifies with the \'etale and surjective map $U' \to U$ over $X-D$. 
 \end{lem}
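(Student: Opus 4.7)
The plan is to construct $f : X' \to Y'$ as a birational morphism via Lemma~\ref{lem:toroidal}, and then upgrade it to an isomorphism by ruling out any $f$-exceptional divisor using the hypothesis that $\omega$ is adapted to log order near the generic point of each component of $D^{(1)}$.

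Since klt singularities are preserved under finite surjective morphisms \cite[Corollary 5.20]{KM98}, $X'$ is klt, so Lemma~\ref{lem:toroidal} yields a birational morphism $f : X' \to Y'$ extending the identity on $U'$. A proper birational morphism to a normal variety that is moreover quasi-finite is an isomorphism (Zariski's main theorem), so it suffices to show $f$ contracts no divisor. Suppose for contradiction that $f$ contracts an irreducible divisor $E \subset X'$. Since $f|_{U'} = \mathrm{id}$, we have $E \subset X' - U'$ and $f(E) \subsetneq A_1$ for some component $A_1$ of $A = Y' - U'$. Because $\mu : X' \to X$ is finite, $\mu(E)$ is a codimension-one irreducible subset of $X - U$, hence an irreducible component of the divisorial part $D^{(1)}$.

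At this point the argument mirrors the metric computation of the omitted Proposition on the rigidity of toroidal compactifications. Pick a general smooth point $x \in E$ mapping to a generic smooth point of $A_1$, with admissible coordinates $(w_1, \ldots, w_n)$ on $Y'$ centered at $f(x)$ so that $A_1 = (w_1 = 0)$, and $(z_1, \ldots, z_n)$ on $X'$ centered at $x$ with $E = (z_1 = 0)$. The map $f$ then has local expression
\[
f(z) = \bigl(z_1^{m_1}\nu_1(z), \ldots, z_1^{m_k}\nu_k(z), \mu_{k+1}(z), \ldots, \mu_n(z)\bigr),
\]
with $\nu_j$ nonvanishing holomorphic and $m_j \geq 1$; the exceptionality of $E$ forces $k \geq 2$. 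By Mok's norm estimate \cite[eq.~(8), p.~338]{Mok12}, the Bergman metric $\omega$ on $T_{Y'}(-\log A)|_{U'}$ is adapted to log order near $A$. Pulling back via $f$ and using $f^* d\log w_2 = m_2\,d\log z_1 + d\log \nu_2$ (with $m_2 \geq 1$), one derives a lower bound
\[
|d\log z_1|_{\omega^{-1}}^2 \;\geq\; \frac{C(-\log |z_1|^2)}{|z_1|^{2 m_2}}
\]
for some $C > 0$. Since $d\log z_1$ is a local nowhere-vanishing section of $\Omega_{X'}^1(\log E)$, this forces the metric $\omega$ on $T_{X'}(-\log D'^{(1)})|_{U'}$ to fail to be adapted to log order along the generic point of $E$. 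Pushing down by the finite map $\mu$, which is étale over a Zariski dense open set of $\mu(E) \subset D^{(1)}$, the same failure persists at the generic point of the component $\mu(E)$ of $D^{(1)}$, contradicting the hypothesis.

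Thus $f$ is an isomorphism, and the finite surjective morphism $g : Y' \to X$ of the statement is obtained as $g := \mu \circ f^{-1}$, which by construction identifies with the finite étale map $U' \to U$ over $X - D$. The main obstacle is the local analytic estimate in the third paragraph: it is the explicit Mok asymptotic for the Bergman metric on $Y'$, combined with the local factorization of the birational map $f$ through powers of a single coordinate $z_1$, that makes the log-order hypothesis genuinely incompatible with the existence of an $f$-exceptional divisor.
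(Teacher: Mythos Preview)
Your overall strategy matches the paper's: apply \cref{lem:toroidal} to get a birational morphism $f:X'\to Y'$, assume an exceptional divisor $E$ exists, and use Mok's asymptotics for the Bergman metric on $Y'$ together with the local factorization of $f$ through powers of $z_1$ to contradict the log-order hypothesis. The metric estimate you write down is exactly the one the paper uses.

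There is, however, a genuine gap in your push-down step. You assert that $\mu:X'\to X$ is \'etale over a Zariski dense open subset of $\mu(E)$, and use this to transfer the failure of log-order from $X'$ to $X$. This is false in general: near a generic point of $\mu(E)\subset D^{(1)}$ the map $\mu$ is typically a \emph{ramified} cyclic cover $(z_1,\dots,z_n)\mapsto (z_1^m,z_2,\dots,z_n)$ for some $m\geq 1$ depending on the local monodromy of $U'\to U$ around that component. The paper handles this via a separate lemma (its \cref{lem:logorder}) which establishes this explicit local model for $\mu$; from $\mu^*d\log x_1 = m\,d\log z_1$ one sees that the log-order condition on $X$ is equivalent to that on $X'$, so the paper transfers log-order \emph{up} to $X'$ and derives the contradiction there. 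Your direction of transfer (failure down from $X'$ to $X$) also works once you know this local model, but the word ``\'etale'' does not justify it. Note also that this same local description is what guarantees $X'$ is smooth at the chosen point $x$, which you implicitly use when taking admissible coordinates $(z_1,\dots,z_n)$ there.

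A smaller point: ``contracts no divisor $\Rightarrow$ quasi-finite'' is not immediate from Zariski's main theorem alone; you need purity of the exceptional locus for a birational morphism onto the \emph{smooth} target $Y'$ (e.g.\ \cite[Corollary 2.63]{KM98}), which is what the paper invokes.
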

 \begin{proof}
 	
 	Since \(X'\) is klt, \cref{lem:toroidal} shows that the identity map of $U'$ extends to a birational morphism $f:X'\to Y'$. Assume by contradiction that $f$ is not an isomorphism.	 As $Y'$ is smooth, it follows from \cite[Corollary 2.63]{KM98}  that the exceptional set ${\rm Ex}(f)$  is of pure codimension one.  Thus, the birational morphism $f$ must contract an irreducible divisorial component \(E\) of the boundary $D':= X'-U'$.  
 	\medskip
 	
 	Denote by $D^{\rm sing}$ the singular locus of $D$, and let $\omega':=\mu^*\omega$,  be the canonical K\"ahler Einstein metric on $U'$. Lemma \ref{lem:logorder} below shows that $\omega'$ is adapted to log-order for $T_{X'^\circ}(-\log E^\circ)$, where $X'^\circ:=\mu^{-1}(X-D^{\rm sing})$, and $E^\circ:=X'^\circ\cap E$.  	
 	We are going to derive a contradiction with the fact the $E$ is contracted. Let $A_1$ be the component of $A$ containing
 	$
 	f(E).
 	$ 
 	We can	 take admissible coordinates $(\mathcal{W};z_1,\ldots,z_n)$ and  $(\mathcal{U};w_1,\ldots,w_n)$ centered at some well-chosen  $x'\in E\cap X'^\circ$ and $y:=f(x')\in A_1$ respectively so that $f(\mathcal{W})\subset \mathcal{U}$, and $f|_{E}:E\to f(E)$ is smooth at $x'$. 
 	Denote by $ (f_1(z),\ldots,f_n(z))$ the expression of $f$ within these   coordinates. Then if the admissible coordinates are chosen properly, one has $$ (f_1(z),\ldots,f_n(z))=(z_1^{m_{1}}g_1(z),\ldots,z_1^{m_k}g_k(z),g_{k+1},\ldots,g_{n} )$$ where $g_1(z),\ldots,g_k(z)$ are holomorphic functions defined on $\mathcal{W}$ so that $g_i(z)\neq 0$ and $m_{i}\geq 1$ for $i=1,\ldots,k$. Since $E$ is exceptional, one has $k\geq 2$.  
 	By  the norm estimate in \cite[eq. (8) on p. 338]{Mok12}, the K\"ahler-Einstein metric $\omega$ for $T_{Y}(-\log A)|_{U}$ is adapted to log order. More precisely, one has
 	$$
 	|d w_2|_{\omega^{-1}}^2\sim (-\log |w_1|^2).
 	$$
 	Since
 	$
 	f^*d\log w_2=m_2d\log z_1+d\log g_2(w),
 	$
 	one thus has the following norm estimate
 	$$
 	|d\log z_1|_{\omega'^{-1}}^2\geq \frac{1}{m^2_2}\mu^*|d\log w_2|_{\omega^{-1}}^2-\frac{1}{m^2_2}\mu^*|\frac{dg_2}{g_2}|_{\omega^{-1}}^2\geq \frac{C(-\log |z_1|^2)}{|z_1|^{2m_2}}
 	$$
 	for some constants $C>0$. Since $d\log z_1$ is a local nowhere vanishing section for $\Omega_{X'}^1(\log D')$, we conclude that the metric $\omega'^{-1}$ for $\Omega_{X'^\circ}^{1}(-\log D'^\circ)$ is \emph{not} adapted to log order, and so is $\omega'$ for  $T_{X'^\circ}(-\log D'^\circ)$. This gives a contradiction, and ends the proof of the lemma. 
 \end{proof}
 
 \begin{lem} \label{lem:logorder}
 	With the notations of the proof of Lemma \ref{lem:isom}, the metric $\omega'$ is adapted to log-order for $T_{X'^\circ}(-\log E^\circ)$.
 \end{lem}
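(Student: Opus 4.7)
The plan is to show that $\omega' = \mu^{\ast}\omega$ is adapted to log order at a general smooth point of $E^{\circ}$, leveraging the fact that $\mu$ is étale over $U$ together with purity of the branch locus. I would first fix a point $x' \in E^{\circ}$ at which $X'$ and $E$ are both smooth (a dense open subset, by normality of $X'$). Setting $x := \mu(x')$, the hypothesis $x' \in X'^{\circ}$ forces $x \in D^{(1)} \setminus D^{\mathrm{sing}}$, so $X$ is smooth at $x$ and $D^{(1)}$ is a smooth hypersurface there. Then I would choose an admissible coordinate chart $(V; z_1,\dots,z_n)$ centered at $x$ with $V \cap D = \{z_1 = 0\}$.

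Next I would establish the local cyclic form of $\mu$ near $x'$. Since $\mu$ is finite and étale over $U$, its restriction $\mu^{-1}(V) \cap U' \to V \cap U \cong \Delta^{n-1} \times \Delta^{\ast}$ is a finite étale cover. By Zariski--Nagata purity applied to the finite surjection $\mu$ between normal schemes, the branch locus near $x$ is contained in the smooth divisor $\{z_1 = 0\}$; combined with the classification of finite étale covers of $\Delta^{n-1} \times \Delta^{\ast}$ (whose fundamental group is $\mathbb{Z}$), this means that, after shrinking $V$, the connected component of $\mu^{-1}(V)$ through $x'$ admits coordinates $(w_1,\dots,w_n)$ in which $\mu^{\ast} z_1 = w_1^{e}$ for some $e \geq 1$ and $\mu^{\ast} z_i = w_i$ for $i \geq 2$. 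Thus $(w_1,\dots,w_n)$ is an admissible coordinate chart on $X'^{\circ}$ at $x'$ with $E = \{w_1 = 0\}$, and $X'$ is automatically smooth there.

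The conclusion is then a chain-rule computation: $d\mu(w_1 \partial_{w_1}) = e\, z_1 \partial_{z_1}$ and $d\mu(\partial_{w_i}) = \partial_{z_i}$ for $i \geq 2$, so the log frame $\mathbf{v}' := (w_1 \partial_{w_1}, \partial_{w_2}, \dots, \partial_{w_n})$ for $T_{X'^{\circ}}(-\log E^{\circ})$ satisfies
\[
H(\omega', \mathbf{v}') \;=\; \mu^{\ast} H\bigl(\omega,\; (e\, z_1 \partial_{z_1}, \partial_{z_2}, \dots, \partial_{z_n})\bigr).
\]
The hypothesis that $\omega$ is adapted to log order for $T_X(-\log D^{(1)})$ at the generic point of $D^{(1)}$ provides constants $C,M > 0$ with $C^{-1}(-\log|z_1|)^{-M} \leq H(\omega,\cdot) \leq C(-\log|z_1|)^{M}$ in a punctured neighborhood of $x$; since $-\log|z_1| = e(-\log|w_1|)$, pulling back gives a bound of the same shape (with $M$ unchanged and $C$ rescaled by $e^{M}$) for $H(\omega',\mathbf{v}')$ in terms of $|\log|w_1||$. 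This is exactly log-order adaptedness at $x'$, and by density of such $x'$ in $E^{\circ}$ the claim follows. The main obstacle is the second step: one needs to know the local étale cover is truly a cyclic ramified cover along a single smooth divisor, which requires purity of the branch locus over the regular scheme $V$ together with the explicit form of $\pi_{1}$ of a punctured polydisk; once this local model is secured, the log-order estimate is a routine pullback computation.
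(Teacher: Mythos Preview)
Your proof is correct and follows essentially the same route as the paper: both arguments classify the connected component of $\mu^{-1}(V)$ over a smooth boundary point via $\pi_1(\Delta^{\ast}\times\Delta^{n-1})\cong\mathbb{Z}$ to obtain the local cyclic model $\mu^{\ast}z_1=w_1^{e}$, $\mu^{\ast}z_i=w_i$, and then pull back the log-order bounds. The only cosmetic difference is that the paper derives the cyclic model (and hence smoothness of $X'$ along $E^{\circ}$) directly via Riemann extension and normality, whereas you first restrict to smooth points of $E^{\circ}$ and close with a density remark; note that your own argument already shows the whole component of $\mu^{-1}(V)$ is isomorphic to $\Delta^{n}$, so every point of $E^{\circ}$ is smooth and the appeal to density is unnecessary.
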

 \begin{proof}
 	Write $\mathcal{W}:=\mu^{-1}(\mathcal{V})$.  Since $\mu|_{\mathcal{W}-D'}:\mathcal{W}-D'\to \mathcal{V}-D$ is a finite unramified cover,  the image of $(\mu|_{\mathcal{W}-D'})_*\big(\pi_1(\mathcal{W}-D')\big)$ is a subgroup of $\pi_1(\mathcal{W}-D)\simeq \bZ$ of index $m$. Letting \(\nu(z_{1}, \cdots, z_{n}) = (z_{1}^{m}, z_{2}, \ldots, z_{n})\), one has thus the following commutative diagram
 	$$
 	\begin{tikzcd}
 		\Delta^*\times \Delta^{n-1} \arrow[d,"\nu|_{\Delta^*\times \Delta^{n-1}}"]\arrow[r, "h^\circ"] & \mathcal{W}\arrow[d, "\mu|_{\mathcal{W}}"] \\
 		\Delta^{n}\arrow[r,"\simeq"] & \mathcal{V}
 	\end{tikzcd}
 	$$
 	so that $h^\circ_{\Delta^*\times \Delta^{n-1}}:\Delta^*\times \Delta^{n-1}\to \mathcal{W}\cap U'$ is an isomorphism. By the   Riemann removable singularities theorem, $h$ extends to a holomorphic map $h:\Delta^n\to \mathcal{W}$, which is easily seen to be surjective with  finite fibers. Hence $h$ is moreover biholomorphic. $(\mathcal{W};z_1,\ldots,z_n;h)$ is therefore an admissible coordinate centered at $x'$ with $(z_1=0)=\mathcal{W}\cap D'$ so we can now identify $\mu$ with \(\nu\). 
 	Hence, $$\mu^*d\log x_1=md\log z_1, \mu^*dx_2=d z_2, \ldots, \mu^*dx_n=dz_n,$$
 	and the frame $(d\log z_1,dz_2,\ldots,dz_n)$ for $\Omega_{X'}^1(\log D')|_{\mathcal{W}}$ is adapted to log order.  This shows that the metric $\omega'$ is adapted to log order for $T_{X'^\circ}(-\log D'^\circ)$.  
 \end{proof}

 We can now conclude the case discussed in Corollary \ref{cor:rigiditycor}, where $(X, D)$ is assumed to be a smooth log-pair. Since the boundary of $Y' - U'$ is smooth, this implies that $D$ must also be smooth. Moreover,  for each connected component $A_i$ of $A$, there is a connected component $D_j$ of $D$ so that $g|_{A_i}:A_i\to D_j$ is a finite surjective morphism, which is also \'etale by the local description of $\mu$ given in the proof of Lemma \ref{lem:logorder}. Hence in this case, $D_i$ is a smooth quotient of an abelian variety by the free action of some finite group $G_i$. This suffices to establish Corollary \ref{cor:rigiditycor}.
 \medskip
 
 The proof of Theorem \ref{thm:rigidity} will be complete with the following lemma.

 \begin{lem}
 	The variety $X$ identifies with the quotient of $Y'$ by the natural action of $G = \faktor{\Gamma}{\Gamma'}$. In particular, \(X \cong Y\).
 \end{lem}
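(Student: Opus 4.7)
The plan is to show that the finite surjective morphism $g : Y' \to X$ from Lemma~\ref{lem:isom} is $G$-invariant, factors through the quotient $Y'/G$, and that the induced map $\bar g : Y'/G \to X$ is an isomorphism. Combined with Proposition~\ref{proptorcom}, which identifies $Y'/G$ with the toroidal compactification $Y$ of $U$, this will yield $X \cong Y$.

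First, I would recall that by Lemma~\ref{lem:actioncomp} in \S\ref{sect:torcomp}, the deck transformation action of $G = \Gamma/\Gamma'$ on $U'$ extends uniquely to a regular action on $Y'$. Restricted to $U'$, the map $g$ is the étale quotient $U' \to U$, and so for every $\sigma \in G$ the morphisms $g$ and $g \circ \sigma$ from $Y'$ to $X$ coincide on the Zariski-dense open subset $U' \subset Y'$. Since $X$ is separated, this forces $g = g \circ \sigma$ on all of $Y'$, i.e.\ $g$ is $G$-invariant.

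Next, since $G$ is a finite group acting on the projective variety $Y'$, the geometric quotient $\pi : Y' \to Y'/G$ exists in the category of normal projective varieties, and the $G$-invariance of $g$ gives a unique factorization $g = \bar g \circ \pi$ through a morphism $\bar g : Y'/G \to X$. This map is finite (as $g$ is finite) and birational (as it coincides with the identity on $U$). I would then conclude using the fact that $X$ is normal (being klt) and $Y'/G$ is normal (as a finite quotient of the smooth variety $Y'$): a finite birational morphism between normal varieties is automatically an isomorphism by Zariski's Main Theorem.

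The only subtlety I anticipate is bookkeeping: making sure the identifications of $U'$ inside $Y'$ and of $U$ inside $X$ are compatible with $\bar g$, and that the $G$-action coming from Lemma~\ref{lem:actioncomp} really is the one whose quotient realizes $U \to U/G = U$ used implicitly in the factorization. Once these are in place, the final identification $Y'/G \cong Y$ is exactly Proposition~\ref{proptorcom}, completing the proof that $X \cong Y$, and hence finishing Theorem~\ref{thm:rigidity}.
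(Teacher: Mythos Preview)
Your proposal is correct and is essentially the same argument as the paper's. The paper packages the step ``finite $G$-invariant map with $R(Y')^G = R(X)$ implies $X = Y'/G$'' into a separate Lemma~\ref{lem:quotient2}, whose proof (local reduction to $\mathrm{Spec}\,A \subset \mathrm{Spec}\,B$ with $A$ integrally closed, then $B^G \subset R(B)^G = R(A)$ forces $B^G = A$) is exactly your ``finite birational morphism between normal varieties is an isomorphism'' applied to $\bar g : Y'/G \to X$; your verification of $G$-invariance via density of $U'$ and the identification $Y'/G \cong Y$ via Proposition~\ref{proptorcom} match the paper as well.
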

 \begin{proof}
 	This result comes right away from Lemma \ref{lem:quotient2} below, taking $M = Y'$, $N = X$, and $G = \mathcal G$. Remark that we have $R(Y')^G = R(U')^G = R(U) = R(X)$ since $U = \faktor{U'}{G}$.
 	For the second statement, remark that by Proposition \ref{proptorcom}, the toroidal compactification $Y$ of $U$ also identifies with the quotient $\faktor{Y'}{G}$. Thus, there is an isomorphism $Y \cong X$ compatible with the identity on $U$. Theorem \ref{thm:rigidity} is proved.
 \end{proof}

 \begin{lem} \label{lem:quotient2}
 	Let $f : M \to N$ be a finite surjective morphism between two normal reduced schemes. Assume that $M$ is acted upon by a finite groupoid $\mathcal G$, and that $f$ is $\mathcal G$-invariant. Suppose in addition that $R(M)^{\mathcal G} = R(N)$, where $R(M), R(N)$ are the rings of rational functions on $M, N$. Then $N$ is the quotient of $M$ by $\mathcal G$.
 \end{lem}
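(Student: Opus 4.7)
The plan is to reduce the statement to a purely commutative-algebraic assertion that can be verified affine-locally on $N$. Since $f$ is finite, it is in particular affine, so covering $N$ by affine opens $U = \operatorname{Spec} A$ produces corresponding affine opens $f^{-1}(U) = \operatorname{Spec} B$ of $M$, with $A \hookrightarrow B$ a finite extension. Being a finite groupoid action and a finite morphism, the $\mathcal{G}$-action on $M$ restricts to an action on $f^{-1}(U)$, so proving the quotient property on each affine piece and glueing will give the global result. Thus the whole task reduces to showing that, on each such affine chart, $A = B^{\mathcal{G}}$ (interpreting the invariants via the two source and target maps of the groupoid as is standard).

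Next I would establish the crucial inclusions. First, $\mathcal{G}$-invariance of $f$ gives $A \subset B^{\mathcal{G}}$, since $f^{\#}: A \to B$ has image in the invariant subring. Second, since $B$ is finite over $A$, so is the intermediate ring $B^{\mathcal{G}}$, and in particular $B^{\mathcal{G}}$ is integral over $A$. Third, because $M$ and $N$ are reduced and $f$ is finite surjective, the total rings of fractions fit into a finite inclusion $R(N) \hookrightarrow R(M)$, and by assumption the image is exactly $R(M)^{\mathcal{G}}$. Since $B^{\mathcal{G}} \subset B \subset R(M)$ and the elements of $B^{\mathcal{G}}$ are $\mathcal{G}$-invariant, we obtain $B^{\mathcal{G}} \subset R(M)^{\mathcal{G}} = R(N)$.

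With these inclusions in hand, the proof concludes by normality. The ring $B^{\mathcal{G}}$ is a subring of $R(N)$, integral over $A$; since $N$ is normal, $A$ is integrally closed in its total ring of fractions $R(N)$, forcing $B^{\mathcal{G}} \subset A$, and hence $B^{\mathcal{G}} = A$. Glueing the identifications $\operatorname{Spec} A = \operatorname{Spec} B^{\mathcal{G}}$ over an affine cover of $N$ yields that $f: M \to N$ is the geometric (and in fact categorical) quotient $M/\mathcal{G}$, as desired.

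The main point of care is the bookkeeping surrounding the groupoid action: one must interpret \enquote{invariants} via the appropriate equaliser of the source and target maps, and check that on affine opens this really does coincide with the ring-theoretic invariants used above. Once this is set up, however, the argument is the standard one, and the only substantive geometric input is the normality of $N$, which is exactly what powers the final integral-closure step. Reducibility of $M$ or $N$ causes no trouble provided one systematically replaces \enquote{function field} by \enquote{total ring of fractions}, as the hypothesis $R(M)^{\mathcal{G}} = R(N)$ is phrased at that level of generality.
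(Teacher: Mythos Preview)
Your proof is correct and follows essentially the same route as the paper: reduce to an affine chart $N=\operatorname{Spec} A$, $M=\operatorname{Spec} B$, observe $A\subset B^{\mathcal G}\subset R(M)^{\mathcal G}=R(N)$ with $B^{\mathcal G}$ integral over $A$, and invoke normality of $N$ to conclude $B^{\mathcal G}=A$. The paper's version is just a terser rendition of exactly this argument, without your additional commentary on groupoid bookkeeping and total rings of fractions.
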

 \begin{proof}
 	It suffices to show that $f_\ast (\mathcal O_M)^{\mathcal G} = \mathcal O_N$. This is a local statement on the base, so we may assume that $N = {\rm Spec}\, A$, $M = {\rm Spec}\, B$, with $A$ is integrally closed. We have \(B^{\mathcal G} \subset R(B)^{\mathcal G} = R(A)\) by assumption. Since \(A \subset B\) is finite, and $A$ is integrally closed, this implies $B^{\mathcal G} \subset A$, as required.
 \end{proof}
\bigskip
\noindent \textbf{Data availability}: All data generated or analysed during this study are included in
this article.

 \providecommand{\bysame}{\leavevmode\hbox to3em{\hrulefill}\thinspace}
 \providecommand{\MR}{\relax\ifhmode\unskip\space\fi MR }
 \providecommand{\MRhref}[2]{%
 	\href{http://www.ams.org/mathscinet-getitem?mr=#1}{#2}
 }
 \providecommand{\href}[2]{#2}

\end{document}